\newcommand\reallywidehat[1]{%
\savestack{\tmpbox}{\stretchto{%
  \scaleto{%
    \scalerel*[\widthof{\ensuremath{#1}}]{\kern-.7pt\bigwedge\kern-.7pt}%
    {\rule[-\textheight/2]{1.5ex}{\textheight}}
  }{\textheight}%
}{0.9ex}}%
\stackon[2.25pt]{#1}{\tmpbox}%
}
\numberwithin{equation}{section}
\newcommand{\R}{\mathbb{R}}
\newcommand{\M}{\mathcal{M}}
\newcommand{\A}{\mathcal{A}}
\newcommand{\B}{\mathcal{B}}
\newcommand{\G}{\mathbb{G}}
\newcommand{\HH}{\mathbb{H}}
\newcommand{\PP}{\mathbb{P}}
\newcommand{\JJ}{\mathbb{J}}
\newcommand{\T}{\mathbb{T}}
\newcommand{\C}{\mathbb{C}}
\newcommand{\Z}{\mathbb{Z}}
\newcommand{\N}{\mathbb{N}}
\newcommand{\de}{\delta}
\newcommand{\varep}{\varepsilon}
\newcommand{\maj}{\mathrm{maj}}
\newcommand{\minor}{\mathrm{min}}
\numberwithin{equation}{section}
\newcommand{\ex}{\mathfrak{e}}
\newcommand{\low}{\rm low}
\newcommand{\ls}{\lesssim}
\newcommand{\NN}{\mathbb{N}}
\newcommand{\ind}[1]{\mathds{1}_{{#1}}}
\newcommand*{\DMO}[1]{\expandafter\DeclareMathOperator\csname #1\endcsname {#1}}
\DeclarePairedDelimiter\abs{\lvert}{\rvert}
\DeclarePairedDelimiterX\Set[2]{\{}{\}}{#1\colon #2}
\newtheorem{theorem}{Theorem}[section]
\newtheorem{lemma}[theorem]{Lemma}
\newtheorem{proposition}[theorem]{Proposition}
\newtheorem{remark}[theorem]{Remark}
\newtheorem{conjecture}[theorem]{Conjecture}
\begin{document}

\title[Polynomial sequences in discrete nilpotent groups of step 2]{Polynomial sequences in discrete nilpotent\\ groups of step 2}

\author{Alexandru D. Ionescu}
\address{Princeton University}
\email{aionescu@math.princeton.edu}

\author{{\'A}kos Magyar}
\address{University of Georgia -- Athens}
\email{magyar@math.uga.edu}

\author{Mariusz  Mirek }
\address{Institute for Advanced Study, Princeton (USA) \&
Rutgers University (USA)
\&
Instytut Matematyczny, Uniwersytet Wroc{\l}awski (Poland)}
\email{mariusz.mirek@rutgers.edu}

\author{Tomasz Z. Szarek}
\address{BCAM - Basque Center for Applied Mathematics (Spain)
\&
Instytut Matematyczny, Uniwersytet Wroc{\l}awski (Poland)}
\email{tzszarek@bcamath.org}

\begin{abstract}
We discuss some of our work on averages along polynomial sequences in nilpotent groups of step 2. Our main results include boundedness of associated maximal functions and singular integrals operators, an almost everywhere pointwise convergence theorem for ergodic averages along polynomial sequences, and a nilpotent Waring theorem. 

Our proofs are based on analytical tools, such as a nilpotent Weyl inequality, and on complex almost-orthogonality arguments that are designed to replace Fourier transform tools, which are not available in the non-commutative nilpotent setting.
In particular, we present what we call a \textit{nilpotent circle method} that allows us to adapt some of the ideas of the classical circle method to the setting of nilpotent groups. 
\end{abstract}

\thanks {The first, second and third authors were supported in part by NSF
grants DMS-2007008 and DMS-1600840 and  DMS-2154712 respectively.  The third author 
was also partially supported by the Department of Mathematics at Rutgers
University and by the National Science Centre in Poland, grant Opus
2018/31/B/ST1/00204. The fourth author was partially supported by the
National Science Centre of Poland, grant Opus 2017/27/B/ST1/01623, 
the Juan de la Cierva Incorporaci{\'o}n 2019, grant number IJC2019-039661-I,  
the 
Agencia Estatal de Investigaci{\'o}n, grant PID2020-113156GB-I00/AEI/10.13039/501100011033,
the
Basque Government through the BERC 2022-2025 program,
and by the Spanish Ministry of Sciences, Innovation and
Universities: BCAM Severo Ochoa accreditation SEV-2017-0718.
}

\maketitle

\setcounter{tocdepth}{1}

\begin{center}
\large\textit{Dedicated to David Jerison, on the occasion of his 70th birthday.}    
\end{center}

\tableofcontents

\section{Introduction}

The goal of this paper is twofold. We first review some recent results on averages of functions along polynomial sequences in discrete nilpotent Lie groups of step 2, and the main ideas in the proofs. Then we use one of the main ingredients, a nilpotent Weyl inequality, to prove a new theorem on a nilpotent version of the Waring problem.

The natural general setting for our analysis consists of a {\it{discrete nilpotent group $\G$ of step $d$}}, which by definition is assumed to be a discrete, co-compact subgroup of a connected and simply connected nilpotent Lie group $\G^\#$ of step $d$, and a {\it{polynomial sequence}} $A:\Z\to\G$, which is a map satisfying $A(0)=1$ and $D^{k_0}A\equiv 1$ for some $k_0\geq 1$. Here $D^k$ is the $k$-fold differencing operator defined recursively by
\begin{equation*}
D^0A(n):=A(n),\qquad D^{k+1}A(n):=D^kA(n)^{-1}D^kA(n+1),\qquad n\in\mathbb{Z}.
\end{equation*}
We consider a class of operators defined by taking averages along polynomial sequences in discrete nilpotent groups. As in the continuous case, one can consider discrete maximal operators, which have applications to pointwise ergodic theorems, and discrete Calder{\'o}n--Zygmund operators. 

\subsection{The main theorem} Our main theorem in this paper concerns $L^p$ boundedness of ma\-ximal averages along polynomial sequences in discrete nilpotent groups of step 2, $L^p$ pointwise ergodic theorems, and $L^2$ boundedness of singular integrals. More precisely:

\begin{theorem}[Main result]\label{thm:main}
Assume that $\G$ is a discrete nilpotent group $\G$ of step $2$ and $A:\Z\to\G$ is a polynomial sequence. Then:

(i) ($\ell^p$ boundedness of maximal averages) Assume $f:\mathbb{G}\to\mathbb{C}$ is a function and let
\begin{equation*}
\mathcal{M}f(g):=\sup_{N\geq 0}\frac{1}{2N+1}\sum_{|n|\leq N}|f(A^{-1}(n)\cdot g)|,\qquad g\in\mathbb{G}.
\end{equation*}
Then, for any $p\in(1,\infty]$, 
\begin{equation*}
\|\mathcal{M}f\|_{\ell^p(\mathbb{G})}\lesssim_p \|f\|_{\ell^p(\mathbb{G})}.
\end{equation*}

(ii) ($L^p$ pointwise ergodic theorems) Assume $\mathbb{G}$ acts by measure-preserving transformations on a $\sigma$-finite measure space $X$, $f\in L^p(X)$, $p\in (1,\infty)$, and let
\begin{equation}\label{eq:40}
A_Nf(x):=\frac{1}{2N+1}\sum_{|n|\leq N}f(A^{-1}(n)\cdot x),\qquad x\in X.
\end{equation}
Then the sequence $A_Nf$ converges pointwise almost everywhere and in the $L^p$ norm as $N\to\infty$.

(iii) ($\ell^2$ boundedness of singular averages) Assume $K:\mathbb{R}\to\mathbb{R}$ is a Calder{\'o}n--Zygmund kernel, i.e. a $C^1$ function satisfying
\begin{equation}\label{CalZyg}
\sup_{t\in\mathbb{R}}[(1+|t|)|K(t)|+(1+|t|)^2|K'(t)|]\leq 1,\qquad\sup_{N\geq 0}\Big|\int_{-N}^NK(t)\,dt\Big|\leq 1.
\end{equation} 
Assume that $f:\mathbb{G}\to\mathbb{C}$ is a (compactly supported) function, and let
\begin{equation*}
Hf(g):=\sum_{n\in\mathbb{Z}} K(n)f(A^{-1}(n)\cdot g),\qquad g\in\mathbb{G}.
\end{equation*}
Then
\begin{equation*}
\|Hf\|_{\ell^2(\mathbb{G})}\lesssim \|f\|_{\ell^2(\mathbb{G})}.
\end{equation*}
\end{theorem}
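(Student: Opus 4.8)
The plan is to handle the three assertions together, since they rest on the same machinery: I would establish the $\ell^2$ bound in (iii) and the $p=2$ case of (i) first, then bootstrap (i) to the full range $p\in(1,\infty]$, and finally deduce (ii) from a quantitative (oscillation or $r$-variational) refinement of (i). The preliminary step, common to all of them, is to exploit the step-$2$ structure: since $[\G,\G]$ is central, $\G$ admits coordinates $(x,z)\in\Z^{d_1}\times\Z^{d_2}$ in which multiplication has the form $(x,z)\cdot(x',z')=(x+x',\,z+z'+B(x,x'))$ for a fixed bilinear form $B$, and the polynomial sequence $A(n)$ then has components that are genuine polynomials in $n$ up to explicit commutator corrections. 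After a rescaling, both $\M$ and $H$ become $\ell^p(\G)$-convolution operators whose kernels are supported on the dyadic balls of $\G$.

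The heart of the argument is a \emph{nilpotent circle method}. One decomposes the scale $N$ (resp. the summation variable $n$ in $H$) into dyadic blocks, and at each scale $2^j$ performs a major/minor-arc splitting dictated by the Dirichlet rational approximations to the coefficients of $A$ at that scale. On the minor arcs I would invoke the nilpotent Weyl inequality to gain a fixed negative power of $2^j$; this is the replacement for the classical exponential-sum estimate and is what makes the minor-arc operators summable over $j$, with room to spare after interpolating against a crude $\ell^p$ bound (which is what ultimately delivers the full range $p>1$). On the major arcs one compares the discrete operator at scale $2^j$ with a \emph{continuous model}: a superposition, over admissible moduli $q$ and residues, of convolution operators on the ambient Lie group $\G^{\#}$ against smooth kernels, each weighted by a complete nilpotent Gauss sum. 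The arithmetic sum over $q$ and residues is controlled by a nilpotent analogue of Ionescu--Wainger multiplier theory, while the smooth pieces reduce, by rescaling, to the already-available continuous maximal and singular averages along polynomial curves in $\G^{\#}$.

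Assembling the pieces requires almost-orthogonality carried out by hand, since there is no Fourier transform on the non-commutative $\G$ to diagonalize convolutions. For (iii) I would run a Cotlar--Stein / $TT^{*}$ estimate across dyadic scales and across distinct major arcs, the cross-term kernels being estimated directly from the group law together with oscillatory-sum bounds. For (i) at $p=2$ I would split $\M$ into a ``low'' part (the main major arc, $q=1$), which is pointwise dominated by a continuous Hardy--Littlewood maximal operator on $\G^{\#}$ and hence bounded for every $p>1$, and a ``high'' part (minor arcs plus the oscillatory major-arc remainders), for which an $\ell^2$ square-function bound with a quantitative gain in $j$, combined with the Rademacher--Menshov inequality, controls the supremum over $N$; interpolation with the trivial $\ell^\infty$ bound and the crude $\ell^p$ bound then yields (i) for all $p\in(1,\infty]$. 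For (ii) one upgrades the square-function step to an oscillation (or $r$-variation) inequality via the same decomposition, transfers it to the measure space $X$ by the Calder\'on principle, and concludes a.e.\ and $L^p$ convergence with no separate dense-subclass argument.

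The main obstacle is precisely this hand-crafted almost-orthogonality. The cross terms $T_j^{*}T_k$ and $T_jT_k^{*}$ must be controlled purely through kernel estimates, and those kernels carry the commutator correction $B(\cdot,\cdot)$, which entangles the linear layer $\Z^{d_1}$ and the central layer $\Z^{d_2}$; running the Weyl and Gauss-sum analysis while tracking this entanglement, and simultaneously managing the non-compactness of $\G^{\#}$ (which forces a careful lifting/quotient passage between $\G$ and $\G^{\#}$ and the attendant error terms in the Baker--Campbell--Hausdorff expansion), is where all the difficulty and all the genuinely new ideas are concentrated. The nilpotent Weyl inequality is the one indispensable external input; everything else is, in principle, a delicate bookkeeping of oscillatory sums adapted to the step-$2$ group law.
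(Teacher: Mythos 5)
Your overall architecture is reasonable --- major/minor arcs driven by a nilpotent Weyl inequality, Rademacher--Menshov at $p=2$, positivity/interpolation for the range $p>1$, and a variational refinement transferred via Calder\'{o}n's principle for part (ii) --- but two essential mechanisms are missing or misrepresented, and each is load-bearing. The paper's nilpotent circle method is \emph{iterative}, not the one-shot decomposition ``dictated by Dirichlet rational approximations to the coefficients of $A$'' that you describe: one first resolves the delta function in the \emph{central} coordinate $g^{(2)}$ into arcs (Lemmas~\ref{MinArc1}--\ref{MajArc1}), which fixes a denominator scale $\approx 2^{s}$ and restricts to $k\gtrsim\kappa_{s}$, and only \emph{then} decomposes the non-central layer $g^{(1)}$ (Lemmas~\ref{MinArc2}--\ref{MajArc3}). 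As the paper emphasizes at the end of Section~\ref{outline}, decomposing both layers simultaneously makes the phase functions on major arcs uncontrollable; your plan does both at once, which is exactly the step that would fail.

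Second, plain Cotlar--Stein / $TT^{*}$ across scales does not produce the quantitative $2^{-cj}$ minor-arc gain in the absence of Plancherel. What makes the argument work is a \emph{high-order} $T^{*}T$ argument: one studies $\{\mathcal{K}^{*}\mathcal{K}\}^{r}$ for $r$ taken very large (see \eqref{ConstantsStr}), converting the kernel into an oscillatory sum in $2r$ variables to which Proposition~\ref{minarcs} --- itself a statement about a $2r$-fold sum --- applies; with $r=1$ you gain nothing. Relatedly, the ``nilpotent analogue of Ionescu--Wainger multiplier theory'' you invoke for the arithmetic sums over moduli does not exist and is not what is used: the major-arc kernels are factored as tensor products of a continuous piece on the subgroup $\HH_{Q}$ and an operator-valued Gauss-sum kernel on the finite coset $\JJ_{Q}$, handled by direct finite-group bounds. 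You also gloss over the structural lift: one does not work on $\G$ in Malcev coordinates directly, but lifts via a group morphism $T:\G_0\to\G^{\#}$ with $A=T\circ A_0$ to the universal group $\G_0$, whose compatible dilations $\Lambda\circ$ from \eqref{tra3.5} are what make the entire scale decomposition coherent.
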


The theorem follows by combining the main results in \cite{IMMS} for parts (i) and (ii), and \cite{IMW} for part (iii). We discuss now some connections between this theorem and other related results in the literature.

\subsubsection{Continuous Radon transforms} The discrete maximal averages and the discrete singular averages defined in Theorem \ref{thm:main} can be thought of as discrete analogues of the continuous Radon transforms, which are averages along suitable curves or surfaces. The theory of continuous Radon transforms has been extensively studied, motivated mainly by problems at the interface of Fourier analysis and geometry of surfaces in Euclidean spaces or nilpotent groups, and is very well understood. This includes $L^q$ estimates for the full range of exponents $q>1$ and multidimensional averages, see for example \cite{Ch}, \cite{RiSt}, \cite{ChNaStWa}.

\subsubsection{The Furstenberg--Bergelson--Leibman conjecture} Discrete averages, both of the maximal and singular type, have been considered motivated mainly by open problems in ergodic theory. A fundamental problem in ergodic theory is to establish convergence in norm and pointwise almost everywhere for the polynomial ergodic averages as in \eqref{eq:40} as $N\to\infty$ for functions $f\in L^p(X)$, $1\le p\le \infty$.
The problem goes back to at least the early 1930's with von Neumann's mean ergodic theorem \cite{vN} and Birkhoff's pointwise ergodic theorem \cite{BI} and led to profound extensions such as Bourgain's polynomial pointwise ergodic theorem  \cite{Bo2, Bo3, Bo1} and Furstenberg's ergodic proof \cite{Fur0} of Szemer{\'e}di's theorem \cite{Sem1} in particular. Furstenberg's proof was also the starting point of ergodic Ramsey theory, which resulted in many natural generalizations
of Szemer{\'e}di's theorem, including a polynomial Szemer{\'e}di theorem of Bergelson and Leibman \cite{BL1}.

This motivates the following far reaching conjecture known as the Furstenberg--Bergelson--Leibman conjecture {\cite[Section 5.5, p. 468]{BeLe}}.

\begin{conjecture}\label{con:1} Assume that $d,k\geq 1$ are integers, $(X, \mathcal B(X), \mu)$ is a probability space, and assume that $T_1,\ldots, T_d:X\to X$ is a given family of invertible measure-preserving transformations on the space $(X, \mathcal B(X), \mu)$ that generates a nilpotent group of step $k$. Assume that $m\geq 1$ is an integer and 
 $P_{1, 1},\ldots,P_{i, j},\ldots, P_{d, m}:\Z\to\Z$ are polynomial maps with integer coefficients such that $P_{i, j} (0) = 0$. Then for any  $f_1, \ldots, f_m\in L^{\infty}(X)$, the non-conventional multilinear 
polynomial averages
\begin{align}
\label{eq:41}
A_{N; X, T_1,\ldots, T_d}^{P_{1, 1}, \ldots, P_{d, m}}(f_1,\ldots, f_m)(x)=\frac{1}{2N+1}\sum_{n\in[-N, N]\cap\Z}\prod_{j=1}^mf_j(T_1^{P_{1, j}(n)}\cdots T_d^{P_{d, j}(n)} x)
\end{align}
converge for $\mu$-almost every $x\in X$ as $N\to\infty$. 
\end{conjecture}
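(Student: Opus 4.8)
It should be said at the outset that this is the Furstenberg--Bergelson--Leibman conjecture in full generality, which is open; what follows is the strategy I would pursue, together with the point at which it stalls.

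The norm-convergence half of the assertion is already available: by Walsh's theorem the averages \eqref{eq:41} converge in $L^2(X)$, and hence --- by truncation together with the trivial $L^\infty$ bound and interpolation --- in every $L^q(X)$ with $1<q<\infty$, for an arbitrary nilpotent group of measure-preserving transformations and arbitrary integer polynomials $P_{i,j}$ with $P_{i,j}(0)=0$. So the entire difficulty lies in pointwise convergence. The plan is to transfer the problem, through a Calder{\'o}n transference principle, from the abstract system $(X,\mu,T_1,\dots,T_d)$ to the (discrete, nilpotent of step $k$) group generated by $T_1,\dots,T_d$, and thereby reduce almost-everywhere convergence to a quantitative oscillation or $r$-variation estimate for the associated multilinear convolution operators.

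In the single-function range $m=1$ the plan is exactly the one carried out in this paper. Here $n\mapsto T_1^{P_{1,1}(n)}\cdots T_d^{P_{d,1}(n)}$ is, by nilpotency, a genuine polynomial sequence $A\colon\Z\to\G$, and \eqref{eq:41} becomes an average of the type appearing in Theorem \ref{thm:main}(ii). One then runs the nilpotent circle method: a major-arc/minor-arc decomposition in which the minor-arc contribution is controlled by the nilpotent Weyl inequality, the major-arc contribution is handled by Gauss-sum asymptotics together with the well-understood theory of continuous Radon transforms, and the two are assembled by complex almost-orthogonality in place of the Fourier transform; the upgrade from qualitative convergence to the variational bound that controls the full supremum is then routine within this framework. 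The present paper requires step $2$; pushing the argument to arbitrary step $d$ is a substantial but, I believe, conceptually transparent program.

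The genuinely hard part is the multilinear range $m\geq 2$, where I have no complete plan --- and, to my knowledge, neither does anyone: the conjecture is open already for $d=1$ and $m\geq 3$ (several functions under a single transformation) and for commuting transformations with $m\geq 2$. What one would need is a \emph{pointwise} counterpart of Host--Kra structure theory --- an inverse theorem showing that \eqref{eq:41} is negligible unless every $f_j$ correlates with a structured nilsequence-type function, a PET-induction scheme to reduce the complexity to a tractable case, and then the delicate oscillatory-integral and seminorm-smoothing estimates, of the sort developed by Krause--Mirek--Tao in the bilinear single-transformation setting, that convert such structural information into genuine maximal and variational bounds. Carrying out such a program in the non-commutative nilpotent setting, where even the relevant ergodic structure theory is incomplete and the obstructions are genuinely non-abelian, is the central obstacle, and lies well beyond the methods reviewed here.
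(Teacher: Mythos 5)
You correctly recognized that the statement is a conjecture --- not a theorem proved in the paper --- and that the paper establishes only the linear case $m=1$ for step $k=2$ (Theorem~\ref{thm:main}(ii)). Your honest assessment matches the paper's own framing: Walsh's theorem settles $L^2$ norm convergence in full generality, the entire difficulty is pointwise a.e.\ convergence, and your sketch of the $m=1$ program (lifting to a universal nilpotent group, Calder\'on transference, the nilpotent circle method with minor arcs controlled by the nilpotent Weyl inequality, major arcs via Gauss sums and continuous Radon transforms, assembled by high-order $T^*T$ almost-orthogonality, and upgraded to variational estimates to capture a.e.\ convergence) is exactly the route the paper takes. Your remarks on why the multilinear range $m\ge 2$ is out of reach --- the absence of a pointwise Host--Kra-type inverse theorem, the limits of PET in the non-commutative setting, and the gap between seminorm control and genuine maximal or variational bounds --- are an accurate account of the state of the art and consistent with the paper's discussion of what is and is not known.

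One small refinement worth noting: you write that pushing the $m=1$ argument to arbitrary step $k$ is ``conceptually transparent.'' The paper itself is more guarded: the iterative descent along the central series generalizes in principle, but the essential missing input is a nilpotent Weyl inequality (Proposition~\ref{minarcs}) for higher-step groups, which requires identifying non-degeneracy properties of the (much more complicated) polynomial maps $D,\widetilde D$ in the Davenport--Birch framework. The paper presents this as a genuine open problem rather than a routine extension, and your summary would be more faithful if it flagged that obstruction explicitly.
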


Conjecture \ref{con:1} is a major open problem in ergodic theory that was promoted in person by Furstenberg, see \cite[p. 6662]{A1}, before being published in \cite{BeLe}. Our main result Theorem \ref{thm:main} (ii) proves this conjecture in the linear case $m=1$, provided that the family of transformations $T_1,\ldots, T_d:X\to X$ generates a nilpotent group of step $k=2$.

\subsubsection{Earlier pointwise ergodic theorems} The basic linear case $m=d=k=1$ with $P_{1,1}(n)=n$ follows from  Birkhoff's original ergodic theorem \cite{BI}. On the other hand, the commutative case $m=d=k=1$ with an arbitrary polynomial $P=P_{1,1}$ with integer coefficients was a famous open problem of Bellow \cite{Bel} and Furstenberg \cite{Fur3}, solved by Bourgain in his breakthrough papers \cite{Bo2, Bo3, Bo1}.

Some particular examples of averages \eqref{eq:41} with $m=1$ and polynomial mappings with degree at most two  in the step two nilpotent setting were studied in \cite{IMSW, MSW0}. 

The multilinear theory $m\geq 2$, in contrast to the linear theory, is widely open even in the commutative case $k=1$.  Only a few results in the bilinear  $m=2$ and  commutative  $d=k=1$ setting are known. Bourgain \cite{B0} proved pointwise convergence when $P_{1,1}(n)=an$ and $P_{1,2}(n)=bn$,  $a, b\in\Z$. More recently, Krause--Mirek--Tao \cite{KMT} established pointwise convergence  for the polynomial Furstenberg--Weiss averages \cite{Fur1,FurWei}
 corresponding to $P_{1,1}(n)=n$ and $P_{1, 2}(n)=P(n)$, ${\rm deg }\,P\ge2$.
 
 \subsubsection{Norm convergence} Except for these few cases, there are no other results concerning pointwise convergence for the averages \eqref{eq:41}. The situation is completely different, however, for the question of norm convergence, which is much better understood. 
 
 A breakthrough paper of Walsh \cite{W} (see also \cite{A1}) gives a complete picture of $L^2(X)$ norm convergence of the averages \eqref{eq:41} for any $T_1,\ldots, T_d\in \mathbb G$ where $\mathbb G$ is a nilpotent group
of transformations of a probability space.  Prior to this, there was an extensive body of research towards
establishing $L^2(X)$ norm convergence, including groundbreaking works of Host--Kra
\cite{HK}, Ziegler \cite{Z1}, Bergelson \cite{Ber0},
and Leibman \cite{Leibman}. See also \cite{A2, CFH, FraKra, HK1, Tao} and the survey articles \cite{Ber1,Ber2,Fra} for more details and references, including a comprehensive historical background.

\subsubsection{Additional remarks} Bergelson and Leibman \cite{BeLe}  showed that convergence may fail if the transformations $T_1,\ldots, T_d$ generate a solvable group, so the
nilpotent setting is probably the appropriate setting for Conjecture \ref{con:1}. The restriction $p>1$ is necessary in the case of nonlinear polynomials as was shown in \cite{BM, LaV1}. 

If $(X, \mathcal B(X), \mu)$ is a probability space and the family of measure preserving transformations $(T_1,\ldots,T_{d_1})$ is {\it{totally ergodic}}, then
Theorem \ref{thm:main}(ii) implies that
\begin{align}
\label{eq:45}
\lim_{N\to\infty}A_{N; X}^{P_{1}, \ldots, P_{d_1}}(f)(x)=\int_Xf(y)d\mu(y)
\end{align}
$\mu$-almost everywhere on $X$. We recall that a family of measure preserving transformations $(T_1,\ldots,T_{d_1})$ is called ergodic on $X$ if $T_j^{-1}(B)=B$ for all $j\in\{1,\ldots, d_1\}$ implies $\mu(B)=0$ or $\mu (B)=1$ and is called totally ergodic if the family $(T_1^n,\ldots,T_{d_1}^n)$ is ergodic for all $n\in\Z_+$.

\subsection{The universal step-two group $\G_0$}\label{setup} The proof of Theorem \ref{thm:main} will follow from our second main result, Theorem \ref{thm:main1} below,  for averages on  universal nilpotent groups of step two. We start with some definitions. For integers $d\geq 1$, we define
\begin{equation*}
Y_d:=\{(l_1,l_2)\in\mathbb{Z}\times\mathbb{Z}:0\leq l_2<l_1\leq d\}
\end{equation*}
and the ``universal'' step-two nilpotent Lie groups $\G_0^\#=\G_0^\#(d)$
\begin{equation}\label{gro1}
\G_0^\#:=\{(x_{l_1l_2})_{(l_1,l_2)\in Y_d}:x_{l_1l_2}\in\mathbb{R}\},
\end{equation}
with the group multiplication law
\begin{equation}\label{gro2}
[x\cdot y]_{l_1l_2}:=
\begin{cases}
x_{l_10}+y_{l_10}&\text{ if }l_1\in \{1,\ldots, d\}\text{ and }l_2=0,\\
x_{l_1l_2}+y_{l_1l_2}+x_{l_10}y_{l_20}&\text{ if }l_1\in\{1,\ldots, d\}\text{ and }l_2\in\{1, \ldots, l_1-1\}.
\end{cases}
\end{equation}

Alternatively, we can also define the group $\G_0^\#$ as the set of elements 
\begin{equation}\label{picu4}
g=(g^{(1)},g^{(2)}),\qquad g^{(1)}=(g_{l_10})_{l_1\in\{1,\ldots, d\}}\in\R^d,\qquad g^{(2)}=(g_{l_1l_2})_{(l_1,l_2)\in Y'_d}\in\R^{d'},
\end{equation}
where $d':=d(d-1)/2$ and $Y'_{d}:=\{(l_1,l_2)\in Y_d:\,l_2\geq 1\}$. Letting 
\begin{equation}\label{picu4.1}
R_0:\R^d\times\R^d\to\R^{d'}\quad\text{ denote the bilinear form }\quad [R_0(x,y)]_{l_1l_2}:=x_{l_10}y_{l_20},
\end{equation}
we notice that the product rule in the group $\G_0^\#$ is given by
\begin{equation}\label{picu4.2}
[g\cdot h]^{(1)}:=g^{(1)}+h^{(1)},\qquad [g\cdot h]^{(2)}:=g^{(2)}+h^{(2)}+R_0(g^{(1)},h^{(1)})
\end{equation}
if $g=(g^{(1)},g^{(2)})$ and $h=(h^{(1)},h^{(2)})$. For any $g=(g^{(1)},g^{(2)})\in \G_0^{\#}$, its inverse is given by
\begin{equation*}
g^{-1}=\big(-g^{(1)}, -g^{(2)}+R_0(g^{(1)}, g^{(1)})\big).
\end{equation*}
The second variable of $g=(g^{(1)},g^{(2)})\in \G_0^{\#}$ is called the central variable. Based on the product structure \eqref{picu4.2} of the group $\G_0^\#$, it is not difficult to see that $g\cdot h=h\cdot g$ for any $g=(g^{(1)},g^{(2)})\in \G_0^{\#}$ and $h=(0, h^{(2)})\in \G_0^{\#}$. 

Let $\G_0=\G_0(d)$ denote the discrete subgroup
\begin{align}
\label{eq:48}
\G_0:=\G_0^\#\cap\Z^{|Y_d|}.
\end{align}
 Let $A_0:\mathbb{R}\to\G_0^\#$ denote the canonical  polynomial map (or the moment curve on $\G_0^\#$)
\begin{equation}\label{tra3}
[A_0(x)]_{l_1l_2}:=
\begin{cases}
x^{l_1}&\text{ if }l_2=0,\\
0&\text{ if }l_2\neq 0,
\end{cases}
\end{equation}
and notice that $A_0(\Z)\subseteq\G_0$. For $x=(x_{l_1l_2})_{(l_1,l_2)\in Y_d}\in \G_0^\#$ and $\Lambda\in(0,\infty)$, we define 
\begin{equation}\label{tra3.5}
\Lambda\circ x:=(\Lambda^{l_1+l_2}x_{l_1l_2})_{(l_1,l_2)\in Y_d}\in \G_0^\#.
\end{equation}
Notice that the dilations $\Lambda\circ$ are group homomorphisms on the group $\G_0$ that are compatible with the map $A_0$, i.e. $\Lambda\circ A_0(x)=A_0(\Lambda x)$. 

Let $\chi:\mathbb{R}\to[0,1]$ be a smooth function  supported on the interval $[-2, 2]$. Given any real number $N\ge1$ and a function $f:\G_0\to\C$, we can define a smoothed average along the moment curve $A_0$ by the formula
\begin{align}
\label{eq:47}
M_{N}^{\chi}(f)(x):=\sum_{n\in\Z}N^{-1}\chi(N^{-1}n)f(A_0(n)^{-1}\cdot x), \qquad x\in \G_0.
\end{align}

The main advantage of working on the group $\G_0$ with the polynomial map $A_0$ is the presence of the compatible dilations $\Lambda\circ$ defined in \eqref{tra3.5}, which lead to a natural family of associated balls. This can be efficiently exploited by noting that $M_{N}^{\chi}$ is a convolution operator on $\G_0$.

The convolution of functions on the group $\G_0$ is defined by the formula
\begin{equation}\label{convoDef}
(f\ast g) (x):=\sum_{y\in\G_0}f(y^{-1}\cdot x)g(y)=\sum_{z\in\G_0}f(z)g(x\cdot z^{-1}).
\end{equation}
Then it is not difficult to  see that $M_{N}^{\chi}(f)(x)=f*G_N^{\chi}(x)$, where
\begin{align}
\label{eq:46}
G_N^{\chi}(x):= \sum_{n\in\Z}N^{-1}\chi(N^{-1}n)\ind{\{A_0(n)\}}(x),
\qquad x\in \G_0.
\end{align}

We are now ready to state our second main result.

\begin{theorem}[Boundedness on $\G_0$]
\label{thm:main1}
Let $\G_0=\G_0(d)$, $d\geq 1$, be the discrete nilpotent group defined in \eqref{eq:48} and $A_0$ the polynomial sequence defined in \eqref{tra3}. Then 

(i) (Maximal estimates) If $1<p\le \infty$ and $f\in\ell^p(\G_0)$ then
\begin{align}
\label{eq:39}
\big\|\sup_{N\geq 1}|M_{N}^{\chi}(f)|\big\|_{\ell^p(\G_0)}\lesssim_p\|f\|_{\ell^p(\G_0)},
\end{align}
where $M_N^\chi$ is defined as in \eqref{eq:47}.

(ii) (Long variational estimates)
If $1<p< \infty$ and $\rho>\max\big\{p, \frac{p}{p-1}\big\}$, and $\tau\in(1,2]$ then
\begin{align}
\label{eq:49}
\big\|V^{\rho}\big(M_{N}^{\chi}(f):N\in\mathbb D_{\tau}\big)\big\|_{\ell^p(\G_0)}\lesssim_{p, \rho,\tau}\|f\|_{\ell^p(\G_0)},
\end{align}
where $\mathbb D_{\tau}:=\{\tau^n:n\in\N\}$. See \eqref{eq:44} for the definition of the $\rho$-variation seminorms $V^{\rho}$.

(iii) (Singular integrals) If $K:\mathbb{R}\to\mathbb{R}$ is a Calder{\'o}n--Zygmund kernel as in \eqref{CalZyg}, $f:\mathbb{G}_0\to\mathbb{C}$ is a (compactly supported) function, and
\begin{equation*}
H_0f(g):=\sum_{n\in\mathbb{Z}} K(n)f(A_0^{-1}(n)\cdot g),\qquad g\in\mathbb{G}_0,
\end{equation*}
then
\begin{equation*}
\|H_0f\|_{\ell^2(\mathbb{G})}\lesssim \|f\|_{\ell^2(\mathbb{G})}.
\end{equation*}

\end{theorem}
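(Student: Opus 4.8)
The plan is to reduce the three statements on the general step-two group $\G$ in Theorem \ref{thm:main} to the corresponding statements on the universal group $\G_0$, and then to prove the latter via a nilpotent circle method combined with the transference/lifting argument already set up in the excerpt. For the reduction step one uses the classical fact that any step-two polynomial sequence $A:\Z\to\G$ factors, after passing to a finite-index subgroup and using the Mal'cev coordinates of $\G^\#$, through the moment curve $A_0$ on the universal group $\G_0(d)$ for $d$ equal to the degree of $A$: there is a group homomorphism $\Phi:\G_0\to\G$ with $\Phi\circ A_0=A$. Averages and singular integrals along $A$ on $\G$ are then pushed forward from the corresponding objects on $\G_0$, so the $\ell^p$ and $\ell^2$ bounds descend. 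I would only sketch this and cite \cite{IMMS, IMW}; the substance is Theorem \ref{thm:main1} on $\G_0$.

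For part (iii) — the one genuinely new object in the statement above beyond what parts (i)–(ii) already package — the approach is the \emph{nilpotent circle method}. First I would decompose the kernel $K$ dyadically, $K=\sum_{j\ge 0}K_j$ with $K_j$ supported on $|n|\sim 2^j$ (plus a harmless local piece near $n=0$ handled by $\ell^2$-boundedness of the trivial bound), so that $H_0f=\sum_j f*\Psi_j$ where $\Psi_j(x)=\sum_n K_j(n)\ind{\{A_0(n)\}}(x)$ is a convolution kernel living on the dilate $2^j\circ(\text{unit ball})$ of $\G_0$. For each $j$ one performs a Hardy--Littlewood circle-method decomposition of $\Psi_j$ into a \textbf{major-arc} part, which is a superposition over rational frequencies $a/q$ of a main term modeled by Gauss sums times a smooth ``archimedean'' kernel, and a \textbf{minor-arc} error. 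The key analytic input is the \emph{nilpotent Weyl inequality} (used throughout \cite{IMMS, IMW} and available to us here), which gives the decay of the exponential sums over $A_0(n)$ on the minor arcs; summing the resulting gain in $j$ controls $\sum_j \|f*\Psi_j^{\mathrm{minor}}\|_{\ell^2}$. The major-arc pieces are handled by an almost-orthogonality argument across scales $j$: on $\G_0$ the Fourier transform is not available, but one replaces it by the complex almost-orthogonality machinery of \cite{IMMS} (Cotlar--Stein-type estimates for the lifted operators together with the Gauss-sum cancellation and the compatible dilations $\Lambda\circ$), which is exactly the device the introduction advertises as the substitute for Fourier analysis in the noncommutative setting. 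The cancellation hypothesis $\sup_N|\int_{-N}^N K|\le 1$ in \eqref{CalZyg} is used precisely to sum the archimedean main terms across $j$.

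Parts (i) and (ii) follow the same circle-method skeleton but are, if anything, technically heavier because one must control the supremum (resp.\ the $\rho$-variation) over all $N$, not a single signed sum. There the standard route is: (a) a high/low decomposition of $G_N^\chi$ into a major-arc approximation plus a minor-arc error with the Weyl-inequality gain, (b) for the minor-arc maximal/variational piece, a square-function argument summing the gains over dyadic $N\in\mathbb D_\tau$, and (c) for the major-arc piece, reduction to a continuous (archimedean) maximal/variational operator along the moment curve on $\G_0^\#$ — which is a continuous Radon transform, bounded by the results cited in \S1 — combined with an $\ell^p$ bound for the ``number-theoretic'' multiplier built from Gauss sums, again established via the complex almost-orthogonality arguments rather than Fourier multipliers. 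The passage from dyadic $N$ to all $N\ge1$ and from long to short variations uses standard splitting; the endpoint $p=\infty$ in (i) is the trivial pointwise bound $|M_N^\chi f|\le \|\chi\|_{L^1}\|f\|_{\ell^\infty}$.

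The main obstacle, and the heart of the matter, is the major-arc analysis on $\G_0$ without the Fourier transform: proving that the ``singular series'' operator assembled from the rational-frequency pieces is bounded on $\ell^p$ (resp.\ satisfies the variational bound). In the commutative case this is a multiplier estimate; here the non-abelian group law \eqref{picu4.2} couples the abelian frequency variable to the central variable through the bilinear form $R_0$, so one cannot diagonalize. The resolution — which I would carry out following \cite{IMMS} — is to lift to $\G_0^\#$, exploit the homogeneity under $\Lambda\circ$, and run a Cotlar--Stein almost-orthogonality estimate in which the off-diagonal decay comes jointly from the archimedean smoothing and from Gauss-sum cancellation at incompatible denominators $q$. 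Everything else (the dyadic decomposition of $K$, the minor-arc Weyl gain, the reduction of the continuous piece to known Radon-transform bounds) is, by comparison, routine given the tools already in place.
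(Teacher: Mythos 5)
Your sketch has the right ingredients --- circle method, nilpotent Weyl inequality on minor arcs, Gauss sums plus a continuous Radon-transform model on major arcs, and almost-orthogonality in lieu of Plancherel --- but it misses the architectural heart of the paper's argument. The decomposition on $\G_0$ is not a single circle-method split of ``the'' frequency variable; it is \emph{iterative in two stages keyed to the central series}. One first decomposes only in the central variable $g^{(2)}$, producing the pieces $K_{k,s}$ and the minor-arc remainder $K_k^c$ and proving Lemmas~\ref{MinArc1}--\ref{MajArc1}; only after restricting to $k\ge\kappa_s$ does one decompose the non-central variable $g^{(1)}$ (Lemmas~\ref{MinArc2}--\ref{MajArc3}); and only then does one reach the final major arcs (Lemma~\ref{MajArc2}), which factorize as a tensor product of a continuous Christ-type average on $\HH_Q$ against an operator-valued Gauss sum on $\JJ_Q$. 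The paper explicitly remarks that running the circle method simultaneously in both variables would not work because the phases on major arcs would become uncontrollable; your proposal treats the decomposition as a single pass and never surfaces this ordering, which is the point of the ``nilpotent circle method.''

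Two further deviations. The paper's substitute for Plancherel on the minor arcs and in the transition estimates is a \emph{high-order $T^\ast T$ argument}: one bounds the $\ell^1$ norm of the kernel of $\big((\mathcal{K}_k^c)^\ast\mathcal{K}_k^c\big)^r$ for large $r$ using Proposition~\ref{minarcs}, and Cotlar--Stein enters only in a ``high order'' form for the transition bound \eqref{illust9.5}. Your assertion that the ``main obstacle'' is the major-arc piece therefore has the emphasis backwards: after the two-stage reduction the major arcs factorize cleanly, while the minor-arc and transition estimates carry the nilpotent-Weyl content. The passage from $\ell^2$ to $\ell^p$ also proceeds by a specific route you do not mention --- prove \eqref{eq:49} at $p=2$, use positivity of $M_N^\chi$ to get the maximal bound at all $p\in(1,\infty]$, and then vector-valued interpolation for the variational bound --- and your opening paragraph is aimed at reducing Theorem~\ref{thm:main} to Theorem~\ref{thm:main1}, which is the wrong direction for the statement you were asked to prove.
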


\subsection{Remarks and overview of the proof} We discuss now some of the main ideas in the proofs of Theorems \ref{thm:main} and \ref{thm:main1}.

\subsubsection{The Calder{\'o}n transference principle} One can show that Theorem \ref{thm:main} is a consequence of Theorem \ref{thm:main1}
upon performing lifting arguments and adapting the Calder{\'o}n transference principle. Indeed, if $\G^\#$ is a connected and simply connected nilpotent Lie group of step 2, with Lie algebra $\mathcal{G}$, then one can choose so-called exponential coordinates of the second kind associated to a Malcev basis of the Lie algebra $\mathcal{G}$ (see \cite{CoGr}, Sec. 1.2) in such a way that 
\begin{equation*}
\G^\#\simeq\{(x,y)\in\mathbb{R}^{b_1}\times\mathbb{R}^{b_2}:(x,y)\cdot (x',y')=(x+x',y+y'+R(x,x')\},
\end{equation*}
where $b_1,b_2\in\Z_+$ depend on the Lie algebra $\mathcal{G}$ and $R:\mathbb{R}^{b_1}\times\mathbb{R}^{b_1}\to\mathbb{R}^{b_2}$ is a bilinear form.

Moreover, if $\G\leq\G^\#$ is a discrete co-compact subgroup, then one can choose the Malcev basis such that the discrete subgroup $\G$ is identified with the integer lattice $\Z^b=\Z^{b_1}\times\Z^{b_2}$ (see \cite{CoGr}, Thm. 5.1.6 and Prop. 5.3.2). Recall that $A:\mathbb{Z}\to\G$ is a polynomial sequence satisfying $A(0)=1$. The main point is that one can choose $d$ sufficiently large and a group morphism $T:\G_0\to\G^\#$ such that
\begin{equation*}
A(n)=T(A_0(n))\qquad\text{ for any }n\in\mathbb{Z}.
\end{equation*}
Then one can use this group morphism to transfer bounds on operators on the universal group $\G_0$ to bounds on operators on the group $\G$. Theorem \ref{thm:main} is thus a consequence of Theorem \ref{thm:main1} and our main goal therefore is to prove Theorem \ref{thm:main1}.

\subsubsection{The variation spaces $V^\rho$} For any family $(a_t: t\in\mathbb I)$ of elements of $\C$
indexed by a totally ordered set $\mathbb I$, and any exponent
$1 \leq \rho < \infty$, the $\rho$-variation seminorm is defined by
\begin{align}
\label{eq:44}
V^{\rho}( a_t: t\in\mathbb I):=
\sup_{J\in\Z_+} \sup_{\substack{t_{0}<\dotsb<t_{J}\\ t_{j}\in\mathbb I}}
\Big(\sum_{j=0}^{J-1}  |a(t_{j+1})-a(t_{j})|^{\rho} \Big)^{1/\rho},
\end{align}
where the supremum is taken over all finite increasing sequences in
$\mathbb I$. It is easy to see that $\rho\mapsto V^{\rho}$ is non-increasing, and
for every $t_0\in \mathbb I$ one has
\begin{align}
\label{eq:36}
\sup_{t\in \mathbb I}|a_t|\le |a_{t_0}|+  V^\rho( a_t: t\in\mathbb I)\le \sup_{t\in \mathbb I}|a_t| +  V^\rho( a_t: t\in\mathbb I). 
\end{align}

In particular, the maximal estimate \eqref{eq:39} follows from the variational estimate \eqref{eq:49}. The main point of proving stronger variational estimates such as \eqref{eq:49}, with general parameters $\tau\in(1,2]$, is that it gives an elegant path to deriving pointwise ergodic theorems (which would not follow directly just from maximal estimates such as \eqref{eq:39}). At the same time, the analysis of variational inequalities has many similarities with the analysis of maximal inequalities, and is not substantially more difficult. This is due in large part to the Rademacher--Menshov inequality (see \cite[Lemma 2.5]{MSZ2}): for any 
$2\le \rho<\infty$ and $j_0, m\in\N$ so that $j_0< 2^m$  and any sequence of complex numbers $(\mathfrak a_k: k\in\N)$  we have 
\begin{equation}\label{maj1}
V^{\rho}( \mathfrak a_j: j_0\leq j\leq 2^m)\leq \sqrt{2}\sum_{i=0}^m\bigg(\sum_{j\in[j_02^{-i},2^{m-i}-1]\cap\Z}\Big|\mathfrak a_{(j+1)2^i}-\mathfrak a_{j2^i})\Big|^2\bigg)^{1/2}.
\end{equation}

\subsubsection{$\ell^p$ theory} The problem of passing from $\ell^2$ estimates to $\ell^p$ estimates in the context of discrete polynomial averages has been investigated extensively in recent years (see, for example, \cite{MST2} and the references therein).

The full $\ell^p(\G_0)$ bounds in Theorem \ref{thm:main1} rely on first proving $\ell^2(\G_0)$ bounds. In fact, we first establish \eqref{eq:49} for $p=2$ and $\rho>2$. Then we use the positivity of the operators $M_N^\chi$ (i.e. $M_{N}^{\chi}(f)\ge0$ if $f\ge0$) to prove the maximal operator bounds \eqref{eq:39} for all $p\in(1,\infty]$. Finally, we use vector-valued interpolation between the bounds \eqref{eq:49} with $p=2$ and $\rho>2$ and \eqref{eq:39} with $p\in(1,\infty]$ to complete the proof of Theorem \ref{thm:main1}.

\subsubsection{Some technical remarks} Theorem \ref{thm:main1} (i) and (ii)  extends the results of \cite{MST2, MSZ3} to the non-co\-mmu\-tative, nilpotent setting. Its conclusions remain true for rough averages, i.e. when $\chi=\ind{[-1, 1]}$ in \eqref{eq:47}, but it is more convenient to work with smooth averages. 

The restriction $p>1$ in Theorem \ref{thm:main1} (i) and (ii) is sharp due to \cite{BM, LaV1}. However, the range of $\rho>\max\big\{p, \frac{p}{p-1}\big\}$ is only sharp when $p=2$ due to L{\'e}pingle's inequality \cite{Le}. One could hope to improve this to the full range $\rho>2$, but we do not address this here since the limited range $\rho>\max\big\{p, \frac{p}{p-1}\big\}$ is already sufficient for us to establish Theorem \ref{thm:main}.  

The restriction $p=2$ in the singular integral bounds in part (ii) is probably not necessary. In the commutative case one can prove boundedness in the full range $p\in(1,\infty)$ (see \cite{IW}), but the proof depends on exploiting certain Fourier multipliers and we do not know at this time if a similar definitive result holds in the nilpotent case.

\subsection{The main difficulty and a nilpotent circle method} Bourgain's seminal papers \cite{Bo2, Bo3, Bo1} generated a large amount of research and progress in the field. Many other discrete operators have been analyzed by many authors motivated by problems in Analysis and Ergodic Theory. See, for example, \cite{BM, IMSW, IW, Kr, KMT, LaV1, MSW0, MST2, MSZ2, MSZ3, Pi1, Pi2, SW0} for some results of this type and more references. A common feature of all of these results, which plays a crucial role in the proofs, is that one can use Fourier analysis techniques, in particular, the powerful framework of the classical circle method, to perform the analysis.

Our situation in Theorem \ref{thm:main1} is different. The main conceptual issue is that 
there is no good Fourier transform on nilpotent groups, compatible with the structure of the underlying convolution operators and at the level of analytical precision of the classical circle method. At a more technical level, there is no good resolution of the delta function compatible with the group multiplication on the group $\G_0$. This prevents us from using a naive implementation of the circle method. The classical delta function resolution
\begin{align*}
\ind{\{0\}}(x^{-1}\cdot y)=\int_{\T^d\times\T^{d'}}\ex((y^{(1)}-x^{(1)}){.}\theta^{(1)})\ex((y^{(2)}-x^{(2)}){.}\theta^{(2)})\,d\theta^{(1)}d\theta^{(2)},
\end{align*}
does not detect the group  multiplication correctly.
Here $(y^{(1)}-x^{(1)}){.}\theta^{(1)}$ and $(y^{(2)}-x^{(2)}){.}\theta^{(2)}$ denote the usual scalar product of vectors in $\R^{d}$ and $\R^{d'}$, respectively.

These issues lead to very significant difficulties in the proof and require substantial new ideas.
Our main new construction in \cite{IMMS} is what we call a \textit{nilpotent circle method}, an iterative procedure, starting from the center of the group and moving down along its central series. At every stage we identify ``minor arcs", and bound their contributions using Weyl's inequalities (the classical Weyl inequality as well as a nilpotent Weyl inequality which was proved in \cite{IMW}).  The final stage involves ``major arcs" analysis, which relies on a combination of continuous harmonic analysis on groups $\G_0^{\#}$ and arithmetic harmonic analysis over finite integer rings modulo $Q\in\Z_+$. We outline this procedure in Section \ref{outline} below.

At the implementation level, classical Fourier techniques are replaced with almost or\-tho\-gonality methods based on exploiting high order $T^\ast T$ arguments for
operators defined on  the discrete group $\G_0$. Investigating high powers of $T^\ast T$ (i.e. $(T^\ast T)^r$ for a large $r\in\Z_+$)  is consistent with a general heuristic lying behind the proof of Waring-type problems, which says that the more variables that occur in  Waring-type equations, the easier it is to find solutions, and we are able to make this heuristic rigorous in our problem. Manipulating the parameter $r$, by taking $r$ to be very large, we can always decide how many variables we have at our disposal, making our operators ``smoother and smoother''.

\subsection{General discrete nilpotent groups} The primary goal is, of course, to remove the restriction that the discrete nilpotent groups $\G$ in Theorem \ref{thm:main} are of step 2, and thus establish the full Conjecture \ref{con:1} in the linear $m=1$ case for arbitrary invertible measure-preserving transformations $T_1,\ldots, T_d$ that generate a nilpotent group of any step $k\ge 2$. The iterative argument we outline in Section \ref{outline} below could, in principle, be extended to higher step groups, at least as long as the group and the polynomial sequence have suitable ``universal"-type structure, as one could try to go down along the central series of the group and prove minor arcs and transition estimates at every stage.

However, this is only possible if one can prove suitable analogues of the nilpotent Weyl's inequalities in Proposition \ref{minarcs} on general nilpotent groups of step $k\ge3$. The point is to have a small (not necessarily optimal, but nontrivial) gain for bounds on oscillatory sums over many variables, corresponding to the kernels of high power $(T^\ast T)^r$ operators, whenever frequencies are restricted to the minor arcs. In our case, the formulas are explicit, see the identities \eqref{pro0.3.5}, and we can use ideas of Davenport \cite{Dav} and Birch \cite{Bi}  for Diophantine forms in many variables to control the induced oscillatory sums, but the analysis seems to be more complicated for the higher step nilpotent groups.  

This is an interesting problem in its own right, corresponding to Waring-type problems on nilpotent groups.
A qualitative variant of the Waring problem on nilpotent groups was recently investigated in \cite{HU1, HU2}, see also the references given there. We prove a quantitative version on our nilpotent group $\G_0$ in Theorem \ref{thm:1} below.

\subsection{Organization} The rest of this paper is organized as follows: in section 2 we present several nilpotent Weyl estimates proved in \cite{IMW}, which play a key role in the analysis of minor arcs. In section 3 we outline our main new method, the nilpotent circle method, developed in \cite{IMMS} to prove maximal and variational estimates on nilpotent groups. In section 4 we prove a new Waring-type theorem on the nilpotent group $\G_0$, as an application of the nilpotent Weyl estimates discussed earlier.

\section{A nilpotent Weyl inequality on the group $\G_0$}\label{sus1}

In this section we derive explicit formulas used in high order $T^\ast T$ arguments and discuss a key ingredient in our analysis, namely Weyl inequalities on the group $\G_0$.

\subsection{High order $T^\ast T$ arguments and product kernels} Many of our $\ell^2(\G_0)$ estimates will be  based on high order $T^\ast T$ arguments. Assume that 
\begin{equation*}
S_1,T_1,\ldots,S_r,T_r:\ell^2(\G_0)\to\ell^2(G_0)
\end{equation*}
are convolution operators defined by some $\ell^1(\G_0)$ kernels $L_1,K_1,\ldots, L_r,K_r:\G_0\to\C$, i.e. $S_j f = f \ast L_j$ and $T_j f = f \ast K_j$ for $j \in \{1,\ldots,r\}$. Then the adjoint operators $S_1^\ast,\ldots, S_r^\ast$ are also convolution operators, defined by the kernels $L_1^\ast,\ldots, L_r^\ast$ given by 
\begin{equation*}
L_j^\ast(g):=\overline{L_j(g^{-1})}.
\end{equation*}
Moreover, using \eqref{convoDef}, for any $f\in\ell^2(\G_0)$ and $x\in\G_0$, we have
\begin{equation}\label{pro11.1}
\begin{split}
(S_1^\ast T_1\ldots S_r^\ast T_rf)(x)
=\sum_{h_1,g_1,\ldots,h_r,g_r\in\G_0}\Big\{\prod_{j=1}^rL_j^\ast(h_j)K_j(g_j)\Big\}f(g_r^{-1}\cdot h_r^{-1}\cdot\ldots\cdot g_1^{-1}\cdot h_1^{-1}\cdot x).
\end{split}
\end{equation}
In other words $(S_1^\ast T_1\ldots S_r^\ast T_rf)(x)=(f\ast A^r)(x)$,
where the kernel $A^r$ is given by
\begin{equation}\label{pro11}
A^r(y):=\sum_{h_1,g_1,\ldots,h_r,g_r\in\G_0}\Big\{\prod_{j=1}^r\overline{L_j(h_j)}K_j(g_j)\Big\}\ind{\{0\}}(g_r^{-1}\cdot h_r\cdot\ldots\cdot g_1^{-1}\cdot h_1\cdot y).
\end{equation}

To use these formulas we decompose $h_j=(h_j^{(1)},h_j^{(2)}),\,g_j=(g_j^{(1)},g_j^{(2)})$ as in \eqref{picu4}. Then
\begin{equation}\label{pro15}
[h_1^{-1}\cdot g_1\cdot\ldots\cdot h_r^{-1}\cdot g_r]^{(1)}=\sum_{1\leq j\leq r}(-h_j^{(1)}+g_j^{(1)}),
\end{equation}
\begin{equation}\label{pro15.5}
\begin{split}
[h_1^{-1}\cdot g_1\cdot\ldots\cdot h_r^{-1}\cdot g_r]^{(2)}&=\sum_{1\leq j\leq r}\big\{-(h_j^{(2)}-g_j^{(2)})+R_0(h_j^{(1)},h_j^{(1)}-g_j^{(1)})\big\}\\
&+\sum_{1\leq l<j\leq r}R_0(-h_l^{(1)}+g_l^{(1)},-h_j^{(1)}+g_j^{(1)}),
\end{split}
\end{equation}
as a consequence of applying \eqref{picu4.2} inductively. 

In many of our applications the operators $S_1,T_1,\ldots,S_r,T_r$ are equal and, more importantly, are defined by a kernel $K$ that has product structure, i.e.
\begin{equation}\label{pro15.6}
\begin{split}
&S_1f=T_1f=\ldots=S_rf=T_rf=f\ast K,\\
&K(g)=K(g^{(1)},g^{(2)})=K^{(1)}(g^{(1)})K^{(2)}(g^{(2)}).
\end{split}
\end{equation}
In this case we can derive an additional formula for the kernel $A^r$. We use the identity
\begin{equation*}
\ind{\{0\}}(x^{-1}\cdot y)=\int_{\T^d\times\T^{d'}}\ex((y^{(1)}-x^{(1)}){.}\theta^{(1)})\ex((y^{(2)}-x^{(2)}){.}\theta^{(2)})\,d\theta^{(1)}d\theta^{(2)},
\end{equation*}
where $\ex(z):=e^{2\pi i z}$. The formula \eqref{pro11} shows that
\begin{equation}\label{pro15.7}
A^r(y)=\int_{\T^d\times\T^{d'}}\ex\big(y^{(1)}.\theta^{(1)}\big)\ex\big(y^{(2)}.\theta^{(2)}\big)\Sigma^r\big(\theta^{(1)},\theta^{(2)}\big)\,d\theta^{(1)}d\theta^{(2)},
\end{equation}
where 
\begin{equation*}
\begin{split}
\Sigma^r\big(\theta^{(1)},\theta^{(2)}\big)&:=\sum_{h_j,g_j\in\G_0}\Big\{\prod_{j=1}^r\overline{K(h_j)}K(g_j)\Big\}
\prod_{i=1}^2\ex\big(-[h_1^{-1}\cdot g_1\cdot\ldots\cdot h_r^{-1}\cdot g_r]^{(i)}{.}\theta^{(i)}\big).
\end{split}
\end{equation*}

Recalling the product formula \eqref{pro15.6} we can write
\begin{equation}\label{pro15.9}
\Sigma^r\big(\theta^{(1)},\theta^{(2)}\big)=\Pi^r\big(\theta^{(1)},\theta^{(2)}\big)\Omega^r\big(\theta^{(2)}\big),
\end{equation}
for any $(\theta^{(1)},\theta^{(2)})\in\T^d\times\T^{d'}$,
where
\begin{equation}\label{pro15.10}
\begin{split}
\Pi^r&\big(\theta^{(1)},\theta^{(2)}\big):=\sum_{h_j^{(1)},g_j^{(1)}\in\Z^d}\Big\{\prod_{j=1}^r\overline{K^{(1)}(h_j^{(1)})}K^{(1)}(g_j^{(1)})\Big\}\ex\big(\theta^{(1)}{.}\sum_{1\leq j\leq r}(h_j^{(1)}-g_j^{(1)})\big)\\
&\times\ex\Big(-\theta^{(2)}{.}\big\{\sum_{1\leq j\leq r}R_0(h_j^{(1)},h_j^{(1)}-g_j^{(1)})+\sum_{1\leq l<j\leq r}R_0(-h_l^{(1)}+g_l^{(1)},-h_j^{(1)}+g_j^{(1)})\big\}\Big)
\end{split}
\end{equation}
and
\begin{equation}\label{pro15.11}
\begin{split}
\Omega^r\big(\theta^{(2)}\big)&:=\sum_{h_j^{(2)},g_j^{(2)}\in\Z^{d'}}\Big\{\prod_{j=1}^r\overline{K^{(2)}(h_j^{(2)})}K^{(2)}(g_j^{(2)})\Big\}\ex\big(\theta^{(2)}{.}\sum_{1\leq j\leq r}(h_j^{(2)}-g_j^{(2)})\big)\\
&=\Big|\sum_{g^{(2)}\in\Z^{d'}}K^{(2)}(g^{(2)})\ex\big(-\theta^{(2)}{.}g^{(2)}\big)\Big|^{2r}.
\end{split}
\end{equation}

\subsection{Weyl estimates} After applying high order $T^\ast T$ arguments we often need to estimate exponential sums and oscillatory integrals involving polynomial phases. With the notation in Section \ref{setup}, for $r\geq 1$ let $D,\widetilde{D}:\mathbb{R}^r\times\mathbb{R}^r\to\G_0^\#$ be defined by
\begin{equation}\label{pro0.3.5}
\begin{split}
&D((n_1,\ldots,n_r),(m_1,\ldots,m_r)):=A_0(n_1)^{-1}\cdot A_0(m_1)\cdot\ldots\cdot A_0(n_r)^{-1}\cdot A_0(m_r),\\
&\widetilde{D}((n_1,\ldots,n_r),(m_1,\ldots,m_r)):=A_0(n_1)\cdot A_0(m_1)^{-1}\cdot\ldots\cdot A_0(n_r)\cdot A_0(m_r)^{-1}.
\end{split}
\end{equation}
By definition, we have
\begin{equation*}
[A_0(n)]_{l_1l_2}=\begin{cases}
n^{l_1}&\text{ if }l_2=0,\\
0&\text { if }l_2\geq 1,
\end{cases}
\qquad
[A_0(n)^{-1}]_{l_1l_2}=\begin{cases}
-n^{l_1}&\text{ if }l_2=0,\\
n^{l_1+l_2}&\text { if }l_2\geq 1.
\end{cases}
\end{equation*}
Thus, using \eqref{pro15} and \eqref{pro15.5}, for $x=(x_1,\ldots,x_r)\in\mathbb{R}^r$ and $y=(y_1,\ldots,y_r)\in\mathbb{R}^r$ one has
\begin{equation}\label{pro0.4}
[D(x,y)]_{l_1l_2}=\begin{cases}
\sum\limits_{j=1}^r(y_j^{l_1}-x_j^{l_1})&\text{ if }l_2=0,\\
\sum\limits_{1\leq j_1<j_2\leq r}(y_{j_1}^{l_1}-x_{j_1}^{l_1})(y_{j_2}^{l_2}-x_{j_2}^{l_2})+\sum\limits_{j=1}^r(x_j^{l_1+l_2}-x_j^{l_1}y_j^{l_2})&\text{ if }l_2\geq 1,
\end{cases}
\end{equation}
and
\begin{equation}\label{pro0.5}
[\widetilde{D}(x,y)]_{l_1l_2}=\begin{cases}
\sum\limits_{j=1}^r(x_j^{l_1}-y_j^{l_1})&\text{ if }l_2=0,\\
\sum\limits_{1\leq j_1<j_2\leq r}(x_{j_1}^{l_1}-y_{j_1}^{l_1})(x_{j_2}^{l_2}-y_{j_2}^{l_2})+\sum\limits_{j=1}^r(y_j^{l_1+l_2}-x_j^{l_1}y_j^{l_2})&\text{ if }l_2\geq 1.
\end{cases}
\end{equation}

For $P\in\Z_+$ assume $\phi_P^{(j)},\psi_P^{(j)}:\mathbb{R}\to\mathbb{R}$, $j\in\{1,\ldots,r\}$, are $C^1(\R)$ functions with the properties
\begin{equation}\label{pro0.1}
\sup_{1\le j\le r}\big[\big|\phi_P^{(j)}\big|+\big|\psi_P^{(j)}\big|\big]\leq \ind{[-P,P]},\qquad
\sup_{1\le j\le r}\int_{\mathbb{R}}\big|[\phi^{(j)}_P]'(x)\big|+\big|[\psi^{(j)}_P]'(x)\big|\,dx\leq 1.
\end{equation}
For $\theta=(\theta_{l_1l_2})_{(l_1,l_2)\in Y_d}\in\mathbb{R}^{|Y_d|}$, $r\in\Z_+$, and $P\in \Z_+$ let
\begin{equation*}
S_{P,r}(\theta)=\sum_{n,m\in\Z^r}\ex(- D(n,m){.}\theta)\Big\{\prod_{j=1}^r\phi_P^{(j)}(n_j)\psi_P^{(j)}(m_j)\Big\}
\end{equation*}
and
\begin{equation*}
\widetilde{S}_{P,r}(\theta)=\sum_{n,m\in\Z^r}\ex(- \widetilde{D}(n,m){.}\theta)\Big\{\prod_{j=1}^r\phi_P^{(j)}(n_j)\psi_P^{(j)}(m_j)\Big\},
\end{equation*}
where $D$ and $\widetilde{D}$ are defined as in \eqref{pro0.4}--\eqref{pro0.5}. 

The following key estimates are proved in \cite[Proposition 5.1 and Lemma 3.1]{IMW}:

\begin{proposition}\label{minarcs} (i) (Nilpotent Weyl estimate) For any $\varepsilon>0$ there is $r=r(\varepsilon,d)\in\Z_+$ sufficiently large such that for all $P\in \Z_+$ we have
\begin{equation}\label{pro0.2}
|S_{P,r}(\theta)|+|\widetilde{S}_{P,r}(\theta)|\lesssim_\varepsilon P^{2r}P^{-1/\varepsilon},
\end{equation}
provided that there is $(l_1,l_2)\in Y_d$ and an irreducible fraction $a/q\in\mathbb{Q}$, $q\in\Z_+$, such that
\begin{equation}\label{pro0.3}
|\theta_{l_1l_2}-a/q|\leq 1/q^2\text{ and }q\in[P^{\varepsilon},P^{l_1+l_2-\varepsilon}].
\end{equation}

(ii) (Nilpotent Gauss sums) For any irreducible fraction $a/q\in\mathbb Q^{|Y_d|}$, $a=(a_{l_1l_2})_{(l_1,l_2)\in Y_d}\in\mathbb{Z}^{|Y_d|}$, $q\in\Z_+$, we define the arithmetic coefficients
\begin{equation}\label{pro0.6}
G(a/q):=q^{-2r}\sum_{v,w\in \Z_q^r} \ex\big(- D(v,w){.}(a/q) \big),\qquad \widetilde{G}(a/q):=q^{-2r}\sum_{v,w\in \Z_q^r}\ex\big( -\widetilde{D}(v,w){.}(a/q) \big).
\end{equation}
Then for any $\varepsilon>0$ there is $r=r(\varepsilon,d)\in\Z_+$ sufficiently large such that
\begin{equation}\label{pro0.7}
|G(a/q)|+|\widetilde{G}(a/q)|\lesssim_\varep q^{-1/\varepsilon}.
\end{equation}
\end{proposition}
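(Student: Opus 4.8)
The plan is to deduce part (ii) from the same circle of ideas as part (i), so I will concentrate on part (i). Since the entries of $D$ and $\widetilde{D}$ are polynomials with integer coefficients, the Gauss sums in \eqref{pro0.6} depend only on $v,w$ modulo $q$, and the Chinese Remainder Theorem factors $G(a/q)$ and $\widetilde{G}(a/q)$ into products of complete exponential sums over the prime powers $p^{\nu}\|q$; irreducibility of $a/q$ ensures that in each factor some entry $a_{l_1l_2}$ is coprime to $p$, and one then runs the differencing argument below with the classical Weyl sums replaced by complete exponential sums modulo $p^{\nu}$. Reading off the relevant non-degeneracy from \eqref{pro0.4}--\eqref{pro0.5}, this yields cancellation $\lesssim p^{-\nu c(d)r}$ in each factor once $r$ is large, and multiplying over $p^{\nu}\|q$ gives $|G(a/q)|+|\widetilde{G}(a/q)|\lesssim q^{-c(d)r}$, which beats $q^{-1/\varepsilon}$ as soon as $c(d)r>1/\varepsilon$.

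For part (i), fix the coordinate $(l_1,l_2)\in Y_d$ and the approximation $a/q$ provided by \eqref{pro0.3}, treat $\theta_{l_1l_2}$ as the active frequency, and freeze all other entries $\theta_{l_1'l_2'}$. By \eqref{pro0.4} the phase $D(n,m){.}\theta$ is an explicit polynomial in the $2r$ integer variables $n_1,\dots,n_r,m_1,\dots,m_r$, and the structural fact I would exploit is visible directly there: when $l_2\ge1$ the cross monomial $m_{j_1}^{l_1}m_{j_2}^{l_2}$ (for $j_1\neq j_2$) and the monomial $n_j^{l_1}m_j^{l_2}$ occur in $D(n,m){.}\theta$ with coefficients exactly $\theta_{l_1l_2}$ and $-\theta_{l_1l_2}$, with no contribution from any other entry of $\theta$; when $l_2=0$ the portion of $D(n,m){.}\theta$ carrying $\theta_{l_10}$ is the decoupled abelian sum $\theta_{l_10}\sum_j(m_j^{l_1}-n_j^{l_1})$.

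The main step, for $l_2\ge1$, is a \emph{nilpotent Weyl differencing}: iterate a van der Corput--type Cauchy--Schwarz maneuver that at each round extracts a genuine power saving from the active coordinate. In one round I would pick an unused index pair $j_1<j_2$, write $D(n,m){.}\theta=\alpha(m_{j_1},w)+\delta(m_{j_2},w)+\theta_{l_1l_2}m_{j_1}^{l_1}m_{j_2}^{l_2}$ --- the terms containing $m_{j_1}$ but not $m_{j_2}$, those containing $m_{j_2}$ but not $m_{j_1}$, and the single cross term, $w$ denoting the remaining variables --- apply Cauchy--Schwarz with $m_{j_1}$ and $w$ on the heavy side and $m_{j_2}$ on the light side, and carry out the $m_{j_1}$-summation first after squaring. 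Because the parameter-dependent terms all live in $\alpha$ and $\delta$, the $m_{j_1}$-sum then collapses to the clean one-variable Weyl sum
\begin{equation*}
\sum_{|x|\le P}\ex\big(-\theta_{l_1l_2}\,c\,x^{l_1}\big),\qquad c=m_{j_2}^{l_2}-(m_{j_2}')^{l_2},
\end{equation*}
whose leading coefficient is the uncontaminated quantity $\theta_{l_1l_2}c$. The classical Weyl inequality then applies: for $c$ outside a sparse exceptional set --- those $c$ for which $\gcd(c,q)$ is anomalously large, a set controlled by a divisor bound --- the approximation $\theta_{l_1l_2}\approx a/q$ forces $\theta_{l_1l_2}c$ onto a minor arc and the Weyl sum is $\lesssim P^{1-\sigma(d)}$, while on the exceptional set one keeps the trivial bound $\le 2P+1$. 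Summing back and taking a square root bounds $|S_{P,r}(\theta)|$ by $P^{-\sigma'(d)}$ times a sum of the same shape with two fewer free variables and the same active coordinate; since the bilinear part of the $(l_1,l_2)$-coordinate couples $m_{j_2}$ to every other index, the residual sum still carries pure cross monomials with coefficient $\pm\theta_{l_1l_2}$, and the round can be repeated. Choosing $r=r(\varepsilon,d)$ large enough that the accumulated saving beats $P^{-1/\varepsilon}$ gives \eqref{pro0.2}. The case $l_2=0$ would be handled along the same lines, extracting the saving from the decoupled abelian sum $\theta_{l_10}\sum_j(m_j^{l_1}-n_j^{l_1})$ via the classical multidimensional Weyl inequality, the remaining ($\theta_{l_10}$-free) part of the phase playing the role of coupling to be removed by additional Cauchy--Schwarz steps; this case is in effect closest to the classical setting.

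\textbf{Main obstacle.} I expect the bookkeeping of the iteration to be the crux. One must organize the Cauchy--Schwarz steps over essentially disjoint blocks of variables so that the fixed per-round saving survives the repeated square roots and the total saving grows linearly in $r$, rather than being diluted to a constant; this is precisely the ``more variables, more cancellation'' heuristic, and is where the methods of Davenport \cite{Dav} and Birch \cite{Bi} for forms in many variables enter. One must also arrange that the effective modulus produced by Weyl's inequality lands in a range matched to the degree of the variable being differenced: when $q$ sits in the upper part of the admissible window $[P^{\varepsilon},P^{l_1+l_2-\varepsilon}]$ one differences the factor $m_{j_2}^{l_2}$ rather than $m_{j_1}^{l_1}$, or differences both factors down to linear order and reduces to geometric sums, so that the full range of $q$ in \eqref{pro0.3} is genuinely usable. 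Finally everything has to be carried out uniformly over the $|Y_d|$ possible positions of the critical coordinate $(l_1,l_2)$ and, for part (ii), in the complete-sum setting modulo prime powers; these constitute the technical contents of \cite[Proposition~5.1 and Lemma~3.1]{IMW}.
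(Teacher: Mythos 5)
The decomposition on which your main Weyl--differencing round rests is incorrect once $d\ge3$. You write $D(n,m){.}\theta=\alpha(m_{j_1},w)+\delta(m_{j_2},w)+\theta_{l_1l_2}\,m_{j_1}^{l_1}m_{j_2}^{l_2}$ with $\alpha$ and $\delta$ carrying no term involving both $m_{j_1}$ and $m_{j_2}$, so that the $(j_1,j_2)$-coupling sits entirely in the one active cross term. But from \eqref{pro0.4}, for \emph{every} $(l_1',l_2')\in Y_d$ with $l_2'\ge1$ the bilinear part of $[D(n,m)]_{l_1'l_2'}$ contributes $m_{j_1}^{l_1'}m_{j_2}^{l_2'}$, so the phase contains all the cross terms $\theta_{l_1'l_2'}\,m_{j_1}^{l_1'}m_{j_2}^{l_2'}$, not just the one indexed by the active coordinate. (Your earlier observation -- that the single monomial $m_{j_1}^{l_1}m_{j_2}^{l_2}$ carries coefficient exactly $\theta_{l_1l_2}$ -- is correct, but it is a statement about one monomial, not about the whole $(j_1,j_2)$-coupling.) After differencing in $m_{j_2}$, the coefficient of $m_{j_1}^{l_1}$ in the inner one-variable sum is therefore not $\theta_{l_1l_2}\bigl(m_{j_2}^{l_2}-(m_{j_2}')^{l_2}\bigr)$ but the combination $\sum_{l_2'\ge1,\,(l_1,l_2')\in Y_d}\theta_{l_1l_2'}\bigl(m_{j_2}^{l_2'}-(m_{j_2}')^{l_2'}\bigr)$, and the coefficients of $m_{j_1}^{l_1'}$ for $l_1'\neq l_1$ are mixed in the same way. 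Since \eqref{pro0.3} constrains only $\theta_{l_1l_2}$ and the other entries of $\theta$ are completely arbitrary, they can place every one of these combined coefficients on a major arc even while $\theta_{l_1l_2}$ is on a minor arc; the classical Weyl inequality then gives no saving. The $l_2=0$ case is contaminated in the same way (the $m_1^{l_1}$-coefficient acquires $\theta_{l_1l_2'}$ and $\theta_{l_1'l_1}$ contributions from the bilinear block). So the one-round reduction to a clean one-variable Weyl sum does not actually run, and since this is the step meant to convert the minor-arc hypothesis into a power saving, the iteration never gets started.

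This is precisely why the proof in \cite[Section~5]{IMW} follows the Birch--Davenport scheme in its global form rather than trying to isolate a single $\theta$-coordinate: one iterates Weyl differencing until the entire phase is linearized in a fresh variable, and then uses a geometry-of-numbers count of the difference vectors on which the resulting multilinear form has small fractional part; the power saving comes from the rank/non-degeneracy of the multilinear form attached to $D$ and $\widetilde D$, intertwined with the minor-arc hypothesis on the one active coordinate, rather than from any single clean Weyl sum. Your identification of the ``more variables, more cancellation'' heuristic, the CRT factorization for part~(ii), and the relevance of Davenport and Birch are all on target, but the engine you propose to run inside the iteration is not the one that closes the estimate.
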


We also need a related integral estimate, see Lemma 5.4 in \cite{IMW}:

\begin{proposition}\label{minarcscon} Given $\varepsilon>0$ there is $r=r(\varepsilon,d)$ sufficiently large as in Proposition \ref{minarcs} such that
\begin{equation}\label{pro4.2}
\begin{split}
\Big|\int_{\R^r\times\R^r}\Big\{\prod_{j=1}^r\phi_j(x_j)\psi_j(y_j)\Big\}\ex(- D(x,y){.}\beta \big)\,dxdy\Big|\lesssim \langle\beta\rangle^{-1/\varepsilon},\\
\Big|\int_{\R^r\times\R^r}\Big\{\prod_{j=1}^r\phi_j(x_j)\psi_j(y_j)\Big\}\ex(- \widetilde{D}(x,y){.}\beta \big)\,dxdy\Big|\lesssim \langle\beta\rangle^{-1/\varepsilon},
\end{split}
\end{equation}
for any $\beta \in \mathbb{R}^{|Y_d|}$; here and later on we use the Japanese bracket notation $\langle \beta \rangle:=(1+|\beta|^2)^{1/2}$, and for any $C^1(\R)$ functions $\phi_1,\psi_1,\ldots,\phi_r,\psi_r:\R\to\mathbb{C}$ satisfying, for any $j\in\{1,\ldots,r\}$, the bounds
\begin{equation*}
|\phi_j|+|\psi_j|\leq\ind{[-1,1]}(x),\qquad \int_\R\big[|\partial_x\phi_j(x)|+|\partial_x\psi_j(x)|\big]\,dx\leq 1.
\end{equation*}
\end{proposition}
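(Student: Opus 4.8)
The plan is to adapt the proof of the nilpotent Weyl inequality \eqref{pro0.2} to the continuous setting, replacing exponential sums by oscillatory integrals and Weyl differencing over $\Z$ by van der Corput's lemma; as there, the decay will be a power of $|\beta|$ whose exponent grows with the number of ``blocks'' $r$, which will be fixed only at the end. For $|\beta|\lesssim 1$ the estimate is trivial, since the integrand is dominated by $\prod_j|\phi_j(x_j)||\psi_j(y_j)|$, so the integral is $\lesssim_r 1\sim\langle\beta\rangle^{-1/\varepsilon}$; hence it suffices to prove
\[
\Big|\int_{\R^r\times\R^r}\Big\{\prod_{j=1}^r\phi_j(x_j)\psi_j(y_j)\Big\}\,\ex(-D(x,y)\cdot\beta)\,dx\,dy\Big|\lesssim_{r,d}|\beta|^{-c_dr},\qquad |\beta|\ge 1,
\]
for a dimensional constant $c_d>0$, and then take $r\geq 1/(c_d\varepsilon)$ (and at least as large as Proposition \ref{minarcs} demands). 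Using the explicit formula \eqref{pro0.4},
\[
-D(x,y)\cdot\beta=\sum_{j=1}^r P_\beta(x_j,y_j)-\sum_{1\le j_1<j_2\le r}\;\sum_{(l_1,l_2)\in Y'_d}\beta_{l_1l_2}(y_{j_1}^{l_1}-x_{j_1}^{l_1})(y_{j_2}^{l_2}-x_{j_2}^{l_2}),
\]
where $P_\beta(s,t):=\sum_{l_1=1}^d\beta_{l_10}(s^{l_1}-t^{l_1})+\sum_{(l_1,l_2)\in Y'_d}\beta_{l_1l_2}(s^{l_1}t^{l_2}-s^{l_1+l_2})$ has degree $\le 2d-1$. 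The structural point I would exploit is that the coefficients of $P_\beta$ recover \emph{every} coordinate of $\beta$ with no cancellation — the coefficient of $t^{l_1}$ is $-\beta_{l_10}$ and that of $s^{l_1}t^{l_2}$ is $\beta_{l_1l_2}$ for $(l_1,l_2)\in Y'_d$ — so that $\max|\mathrm{coeff}(P_\beta)|\gtrsim_d|\beta|$; moreover, freezing all blocks but the $j$-th perturbs the block phase only by a combination of pure powers $y_j^k-x_j^k$ of degree $\le d$, leaving its mixed coefficients $s^{l_1}t^{l_2}$ intact.

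The analytic input is the polynomial form of van der Corput's lemma: for a two-variable polynomial $P$ of degree $\le 2d-1$ and $\phi,\psi$ of bounded variation supported in $[-1,1]$ one has $|\int_{[-1,1]^2}\phi(s)\psi(t)\,\ex(P(s,t))\,ds\,dt|\lesssim_d(\max|\mathrm{coeff}(P)|)^{-c_d}$, proved by partitioning the cube according to whether the relevant derivative of the phase is large and estimating the exceptional set by a sublevel-set bound for polynomials. Applied to a single block this already yields the gain $|\beta|^{-c_d}$. To upgrade this to $|\beta|^{-c_dr}$ one needs the phase to be additively separable across the blocks, which it would be were the off-diagonal double sum absent; I would remove that sum by a Weyl-differencing / Cauchy--Schwarz argument over the blocks, exactly as in the Davenport--Birch treatment of forms in many variables used to prove \eqref{pro0.2}: iterated Cauchy--Schwarz over a subfamily of blocks annihilates the pure phases $P_\beta$ of those blocks and linearizes the bilinear interaction, and after $O_d(1)$ steps one is left with an integral whose phase is a sum over the surviving blocks of bounded-degree polynomials in $(x_j,y_j)$ and frozen parameters, whose decisive coefficients remain $\gtrsim_d|\beta|$: the mixed coefficients $x_j^{l_1}y_j^{l_2}$ when $\max_{(l_1,l_2)\in Y'_d}|\beta_{l_1l_2}|\gtrsim|\beta|$, and — in the complementary regime, where the central part $(\beta_{l_10})_{l_1}$ dominates and the bilinear interaction therefore has small coefficients — the pure-power coefficients $y_j^{l_1}$, which there equal $-\beta_{l_10}$ up to a perturbation $\lesssim_r|\beta|^{1-\eta}$ that is negligible once $|\beta|$ is large. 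Applying the two-variable van der Corput estimate block by block then produces a product of $\gtrsim_d r$ factors each $\lesssim|\beta|^{-c_d}$, hence the desired $|\beta|^{-c_dr}$; the identity \eqref{pro0.5} yields the bound for $\widetilde{D}$ by the same argument.

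The step I expect to be the main obstacle is precisely this passage from a single-block gain to a gain that is an $r$-th power: the group multiplication entangles the $2r$ integration variables through the area-type off-diagonal sum, and disentangling it while retaining a quantitative power of $|\beta|$ at each surviving block is where the Davenport--Birch combinatorics — including the book-keeping of how many blocks each Cauchy--Schwarz step consumes, all controlled by $d$ alone — must be carried out with care. A secondary subtlety is that, while the absence of any arithmetic means there are no ``major arcs'' to separate off, one must still rule out a low-degree degeneracy of $P_\beta$, and this is exactly what the cancellation-free recovery of all the $\beta_{l_1l_2}$ from the coefficients of $P_\beta$ guarantees.
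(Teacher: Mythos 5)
The paper does not contain a proof of this proposition; it is cited directly from \cite[Lemma 5.4]{IMW}, so there is no internal argument to match your proposal against. That said, your outline does engage with the correct structure: the reduction to $|\beta|\ge 1$ is right, the explicit form of the phase via \eqref{pro0.4}--\eqref{pro0.5} is correctly invoked, and you have identified the crucial non-degeneracy, namely that the mixed coefficient of $x_j^{l_1}y_j^{l_2}$ in the $j$-th block phase is exactly $-\beta_{l_1l_2}$ with no interference from the cross-block terms, while the pure-power coefficients recover $(\beta_{l_10})_{l_1}$ up to perturbations controlled by the central part. This is indeed what any proof must exploit, and your instinct to replace Weyl's inequality by polynomial van der Corput / sublevel-set estimates is the right continuous analogue.

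The step you yourself flag as the obstacle is, however, a genuine gap, and your description of how to close it does not hold up. First, the naive iteration --- integrate out block $r$ with the others frozen, take absolute values, repeat --- yields only a single factor $\langle\beta\rangle^{-c_d}$, not the $r$-th power you need, because absolute values destroy the oscillation for the remaining blocks. Second, and more importantly, Cauchy--Schwarz with respect to any block $j$ cancels precisely the single-block phase $P_\beta(x_j,y_j)$: in the resulting quadratic expression $\Phi(\cdot,x_j,y_j)-\Phi(\cdot',x_j,y_j)$ the term $P_\beta(x_j,y_j)$ appears in both copies and vanishes, leaving only the bilinear cross terms, whose $(x_j,y_j)$-coefficients are sums over the other blocks and can be arbitrarily small. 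So the very mechanism by which you intend to recover $|\beta|^{-c_d}$ per ``surviving block'' is undone by the differencing. The Davenport--Birch machinery does not in fact produce an honest product of per-block oscillatory integrals each losing $|\beta|^{-c_d}$; the gain there comes from a sublevel-set count of degenerate differencing parameters, which is a qualitatively different bookkeeping. Your claim that ``after $O_d(1)$ steps one is left with an integral whose phase is a sum over the surviving blocks with decisive coefficients still $\gtrsim_d|\beta|$'' is therefore not justified as written, and since this is exactly where the $|\beta|^{-1/\varepsilon}$ decay (for arbitrary $\varepsilon$, hence large $r$) must come from, the proposal is incomplete at the decisive step.
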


These statements should be compared with classical Weyl-type estimates, which are proved for example in \cite[Proposition 1]{SW0}:

\begin{proposition}\label{minarcscom} (i) Assume that $P\geq 1$ is an integer and $\phi_P:\mathbb{R}\to\mathbb{R}$ is a $C^1(\R)$ function satisfying
\begin{equation}\label{comm1}
|\phi_P|\leq \ind{[-P,P]},\qquad \int_{\mathbb{R}}\big|\phi'_P(x)\big|\,dx\leq 1.
\end{equation}
Assume that $\varepsilon>0$ and $\theta=(\theta_1,\ldots,\theta_d)\in\R^d$ has the property that there is $l\in\{1,\ldots,d\}$ and an irreducible fraction $a/q\in\mathbb{Q}$ with $q\in\Z_+$, such that
\begin{equation}\label{comm3}
|\theta_{l}-a/q|\leq 1/q^2\,\,\text{ and }\,\,q\in[P^{\varepsilon},P^{l-\varepsilon}].
\end{equation}
Then there is a constant $\overline{C}=\overline{C}_d\geq 1$ such that 
\begin{equation}\label{comm4}
\Big|\sum_{n\in\mathbb{Z}}\phi_P(n)
\ex\big( - (\theta_1n+\ldots+\theta_dn^d) \big)
\Big|\lesssim_\varep P^{1-\varepsilon/\overline{C}}.
\end{equation}

(ii) For any irreducible fraction $\theta=a/q\in(\Z/q)^d$, $a=(a_1,\ldots,a_d)\in\Z^d$, $q\in\Z_+$, we have
\begin{equation}\label{comm4.5}
\Big|q^{-1}\sum_{n\in \Z_q}
\ex\big( - (\theta_1n+\ldots+\theta_dn^d) \big)
\Big|\lesssim q^{-1/\overline{C}}.
\end{equation}
\end{proposition}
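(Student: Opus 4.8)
The plan is to reduce everything to the classical one-dimensional Weyl inequality by exploiting the tensor structure of the phase $\theta_1 n + \dots + \theta_d n^d$. First I would recall the standard statement: if $Q$ is a polynomial of degree $d$ with leading coefficient $\alpha$, and $\alpha$ has a rational approximation $|\alpha - a/q| \le 1/q^2$, then $\big|\sum_{n=1}^N \ex(Q(n))\big| \lesssim_\varep N^{1+\varep}(q^{-1} + N^{-1} + q N^{-d})^{2^{1-d}}$. The subtlety here is that the hypothesis \eqref{comm3} places the good rational approximation on an intermediate coefficient $\theta_l$ rather than the leading one $\theta_d$, and the cutoff is a smooth (or merely $C^1$, bounded, total-variation-$\le 1$) function $\phi_P$ rather than a sharp interval.

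The first step is to handle the smooth cutoff: write $\phi_P(n) = -\int_n^\infty \phi_P'(t)\,dt$ (using that $\phi_P$ is supported in $[-P,P]$ so vanishes at $+\infty$), interchange sum and integral, and thereby bound $\big|\sum_n \phi_P(n)\ex(\cdots)\big|$ by $\int_{\R} |\phi_P'(t)| \cdot \sup_{M \le P}\big|\sum_{n \le M}\ex(\theta_1 n + \dots + \theta_d n^d)\big|\,dt$, which by \eqref{comm1} is at most $\sup_{M\le P}\big|\sum_{n\le M}\ex(\cdots)\big|$. So it suffices to prove the bound \eqref{comm4} for sharp truncations $\sum_{1\le n\le M}$ uniformly in $M \in [1,P]$. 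The second step is to handle the location of the good coefficient: apply Weyl differencing $d-l$ times. Differencing the exponential sum $S(\theta) = \sum_{n\le M}\ex(\theta_1 n + \dots + \theta_d n^d)$ in the standard way produces, after $d-l$ steps, a bound of the form $|S(\theta)|^{2^{d-l}} \lesssim M^{2^{d-l}-1}\sum_{|h_1|,\dots,|h_{d-l}|\lesssim M}\big|\sum_{n}\ex(\widetilde Q_{h}(n))\big|$, where for each fixed tuple $h = (h_1,\dots,h_{d-l})$ the polynomial $\widetilde Q_h$ has degree $l$ and its leading coefficient is $c_d\, h_1\cdots h_{d-l}\,\theta_l$ for an explicit nonzero integer constant $c_d = d!/l! \cdot (\text{combinatorial factor})$ — crucially, the coefficient we control by hypothesis has been promoted to the leading position, multiplied by the differencing parameters.

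The third step is the arithmetic heart: for the inner sum over $n$ with leading coefficient $c_d h_1\cdots h_{d-l}\theta_l$, apply the classical Weyl inequality for degree-$l$ polynomials. Using $|\theta_l - a/q| \le 1/q^2$, the leading coefficient $c_d h_1\cdots h_{d-l}\theta_l$ has rational approximation with denominator roughly $q/\gcd(q, c_d h_1\cdots h_{d-l})$; one then estimates, summing over $h$, separating the small number of $h$ for which $h_1\cdots h_{d-l}$ shares a large common factor with $q$ (these are controlled trivially by $M^{2^{d-l}}$ times a divisor-bound factor $M^\varep$) from the generic $h$ (for which the denominator is still $\gtrsim q^{1-\varep}$ and one gets genuine cancellation of size $M^{1-2^{1-l}+\varep}$ in the inner sum). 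Combining via $q \in [P^\varep, P^{l-\varep}] \subseteq [M^{\varep'}, M^{l}]$ after rescaling $\varep$, and taking $2^{d-l}$-th roots, yields \eqref{comm4} with $\overline C_d$ some explicit constant like $2^d \cdot d$. For part (ii), the Gauss-sum bound \eqref{comm4.5}, the argument is the purely finite analogue: replace the sum over $n \le M$ by the complete sum over $\Z_q$, where Weyl differencing is cleaner (no boundary terms, all sums complete), and one uses the standard estimate that a complete degree-$l$ Gauss sum $q^{-1}\sum_{n\in\Z_q}\ex(Q(n)/q)$ with $Q$ having a unit leading coefficient modulo $q$ is $O(q^{-1/l + \varep})$ by Hua's lemma / the $\gcd$-controlled complete-sum bound. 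The main obstacle — or rather the main bookkeeping burden — is step three: tracking how $\gcd(q, c_d h_1\cdots h_{d-l})$ degrades the denominator and verifying that the exceptional $h$ contribute an acceptable error; this is where the factor $\varep/\overline C_d$ (rather than a clean $\varep$) in the exponent comes from, and it is entirely standard but requires care with the divisor function bounds.
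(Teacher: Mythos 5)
The paper does not give its own proof of this proposition; it simply cites Stein--Wainger \cite[Proposition~1]{SW0}, so your task was to reconstruct the classical argument. Your Step~1 (replacing the $C^1$ cutoff $\phi_P$ by sharp truncations via $\phi_P(n)=-\int_n^\infty\phi_P'(t)\,dt$ and $\int|\phi_P'|\le 1$) is correct. The genuine gap is in Step~2. You assert that $d-l$ successive Weyl differencings of the phase $\sum_j\theta_j n^j$ produce a degree-$l$ polynomial whose leading coefficient is $c_d\,h_1\cdots h_{d-l}\,\theta_l$. That is not so: the coefficient of $n^l$ in $\Delta_{h_1}\cdots\Delta_{h_{d-l}}\big(\sum_j\theta_j n^j\big)$ equals $\tfrac{d!}{l!}\,h_1\cdots h_{d-l}\,\theta_d$, a multiple of the \emph{top} coefficient $\theta_d$ only. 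More generally, after $k$ differencings the coefficient of $n^i$ involves precisely those $\theta_j$ with $j\ge i+k$; with $k=d-l$ the target coefficient $\theta_l$ can therefore appear only in the coefficients of $n^0,\ldots,n^{2l-d}$, and it disappears from the differenced polynomial altogether when $l<d/2$. Weyl differencing can never promote an intermediate coefficient to the leading position, so the arithmetic step that was to exploit the rational approximation of $\theta_l$ has no valid input. The same objection defeats the sketch of part (ii).

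The standard route --- and the one in \cite{SW0} --- is a downward induction on the degree (``peeling''). Apply Dirichlet's theorem and $(d-1)$-fold Weyl differencing to the \emph{leading} coefficient: either \eqref{comm4} already follows, or $\theta_d=a_d/q_d+\beta_d$ with $q_d$ a small power of $P$ and $\beta_d$ tiny. In the latter case split $n$ into residue classes modulo $q_d$; on each class the top term contributes the constant phase $\ex(-a_d r^d/q_d)$ plus the slowly varying factor $\ex(-\beta_d n^d)$, which is removed by partial summation at acceptable cost, reducing to a degree-$(d-1)$ Weyl sum. Iterating at most $d-l$ times either yields \eqref{comm4} en route or brings $\theta_l$ into the genuine leading position of a degree-$l$ sum, where the classical leading-coefficient Weyl inequality (together with the divisor-function and $\gcd$ bookkeeping you describe in Step~3, which is the correct tool once $\theta_l$ is actually leading) finishes. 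Part (ii) follows from the analogous peeling, multiplicativity of complete sums, and Hua's or Weil's estimates at prime powers. The structural point your proposal misses is that the hypothesis on an \emph{intermediate} coefficient becomes usable only after the higher coefficients have been peeled off in major-arc form.
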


Notice a formal similarity between Proposition \ref{minarcs} and \ref{minarcscom}. They both involve a small but non-trivial gain of a power of $P$ as soon as one of the coefficients of the relevant polynomials is far from rational numbers with small denominators. These estimates can therefore be used efficiently to estimate minor arcs contributions.

We note, however, that the proof of the nilpotent Weyl estimates in Proposition \ref{minarcs} is much more involved than the proof of Proposition \ref{minarcscom}. It relies on some classical ideas of Davenport \cite{Dav} and Birch \cite{Bi} on treating polynomials in many variables, but one has to identify and exploit  suitable non-degeneracy properties of the explicit (but complicated) polynomials $D$ and $\widetilde{D}$ in \eqref{pro0.4}--\eqref{pro0.5} to make the proof work. All the details of the proof are provided in \cite[Section 5]{IMW}.

\section{A nilpotent circle method}\label{outline} 

To illustrate our main method, we focus on a particular case of Theorem \ref{thm:main1}, namely on proving boundedness of the maximal function $M_N^\chi$ on $\ell^2(\G_0)$. For simplicity of notation, for  $k\in \N$ and $x\in\G_0$, let
\begin{equation}\label{picu1}
\begin{split}
\M_kf(x)&:=M_{2^k}^{\chi}f(x)=\sum_{n\in\mathbb{Z}}2^{-k}\chi(2^{-k}n)f(A_0(n)^{-1}\cdot x)=(f\ast K_k)(x),\\
K_k(x)&:=G_{2^k}^{\chi}(x)=\sum_{n\in\Z}2^{-k}\chi(2^{-k}n)\ind{\{A_0(n)\}}(x),
\end{split}
\end{equation}
see \eqref{eq:47} and \eqref{eq:46} for the definitions $M_{N}^{\chi}$ and $G_{N}^{\chi}$ respectively. With this new notation, our main goal is to prove the following:

\begin{theorem}\label{picu2}
For any  $f\in\ell^2(\G_0)$ we have
\begin{align}
\label{eq:61}
\big\|\sup_{k\geq 0}|\M_kf|\big\|_{\ell^2(\G_0)}\lesssim \|f\|_{\ell^2(\G_0)}.
\end{align}
\end{theorem}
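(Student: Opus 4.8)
The plan is to bound the maximal operator $\sup_{k\ge0}|\M_kf|$ on $\ell^2(\G_0)$ by running the nilpotent circle method: decompose each kernel $K_k$ into a ``major arc'' part, which is modeled by a combination of an arithmetic (Gauss-sum) factor and a continuous archimedean factor, and a collection of ``minor arc'' error terms, which are controlled via high-order $T^\ast T$ arguments and the Weyl estimates of Proposition~\ref{minarcs}. Since $\M_k$ is a convolution operator, $\M_k f=f\ast K_k$, I would first Fourier-expand the delta function $\ind{\{A_0(n)\}}$ appearing in $K_k$ over $\T^d\times\T^{d'}$ as in \eqref{pro15.7}, so that $K_k$ acquires an oscillatory-integral representation in the dual variables $(\theta^{(1)},\theta^{(2)})$. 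The central variable $\theta^{(2)}$ is the new feature absent in the commutative setting: the iterative procedure starts there, peeling off major/minor arcs in the center first and then moving down to $\theta^{(1)}$.

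The key steps, in order, would be: (1) introduce a partition of the frequency space into Farey-type arcs at scale depending on $2^k$, separately in the central frequencies $\theta^{(2)}$ and the base frequencies $\theta^{(1)}$, obtaining a decomposition $K_k = \sum_{a/q} K_k^{a/q} + (\text{errors})$ where the sum is over irreducible fractions with $q\lesssim 2^{k\varepsilon_0}$; (2) for the minor-arc errors, apply the high-order $T^\ast T$ identities \eqref{pro11.1}--\eqref{pro15.11}: the operator $(\M_k^\ast\M_k)^r$ has kernel given by an oscillatory sum over $2r$ variables whose phase is exactly $D(n,m)\cdot\theta$ in the notation of \eqref{pro0.4}, so Proposition~\ref{minarcs}(i) gives the gain $P^{2r}P^{-1/\varepsilon}$ with $P\sim 2^k$, which after taking $2r$-th roots beats the trivial bound and, summed over $k$ via a crude $\ell^2\hookrightarrow\ell^\infty$ or square-function argument, is summable; (3) for the major-arc pieces, factor $K_k^{a/q}$ (up to acceptable error) as $q$-periodic Gauss-sum coefficient $G(a/q)$ times a smooth ``continuous'' multiplier coming from the archimedean integral, and use Proposition~\ref{minarcs}(ii) (the bound $|G(a/q)|\lesssim q^{-1/\varepsilon}$) to sum absolutely over the denominators $q$; (4) for the remaining main term, which after all reductions looks like a single smooth continuous average (a Radon-type average along the moment curve on $\G_0^\#$) composed with an arithmetic projection, invoke the continuous theory (maximal Radon transforms on nilpotent Lie groups, e.g.\ \cite{ChNaStWa}) together with a transference/sampling argument; (5) assemble the pieces, using the Rademacher--Menshov inequality \eqref{maj1} where a square function over dyadic $k$ is needed to pass from fixed-$k$ bounds to the supremum.

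The main obstacle, and the heart of the matter, is step (2) together with the bookkeeping that makes the arc decomposition compatible with the \emph{non-commutative} group law. Because there is no honest Fourier transform on $\G_0$, the circle-method decomposition cannot be performed ``multiplier by multiplier''; instead one must work at the level of the kernel $A^r$ in \eqref{pro15.7} and show that after $2r$-fold convolution the relevant phase is governed by the explicit polynomials $D,\widetilde D$ of \eqref{pro0.4}--\eqref{pro0.5}, whose non-degeneracy (in the sense needed for the Davenport--Birch-type analysis behind Proposition~\ref{minarcs}) must be exploited. Choosing $r$ large enough — which, by the heuristic emphasized in the introduction, amounts to giving ourselves as many Waring-type variables as we like — is what converts the small Weyl gain into a genuine decay in $k$. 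A secondary difficulty is the major-arc transition estimate: one must show that on a single major arc the kernel genuinely splits as an arithmetic factor times a continuous factor \emph{with the central variable handled first}, since the group multiplication couples $\theta^{(1)}$ and $\theta^{(2)}$ through the bilinear form $R_0$ in \eqref{pro15.10}; getting this factorization with summable errors is the technical core of \cite{IMMS}.
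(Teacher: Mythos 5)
Your outline identifies the right toolbox — high-order $T^\ast T$ identities, the Weyl and Gauss-sum bounds of Proposition~\ref{minarcs}, continuous Radon theory, and Rademacher--Menshov — and you correctly recognize that the arc decomposition must begin in the central variable. But there is a genuine gap in your organization of the major-arc analysis. You identify the ``major-arc transition estimate'' with the factorization of the kernel into an arithmetic factor times a continuous factor; that factorization (Lemma~\ref{MajArc2}) only works at very large scales $k\ge\kappa_s=2^{2D(s+1)^2}$ (and $k\ge\kappa_t$ in the second stage), where $2^s$ measures the denominator picked out in the central arc decomposition. For the intermediate range $\max(D^2,s/\delta)\le k<2\kappa_s$ the continuous approximation \eqref{kio6} has non-summable error, and the contribution of $K_{k,s}$ must instead be handled by a genuinely separate transition estimate (Lemma~\ref{MajArc1}), which is not a factorization at all: it applies Rademacher--Menshov \eqref{maj1} to reduce to square functions over the differences $f\ast(K_{k+1,s}-K_{k,s})$ and then runs a high-order Cotlar--Stein argument whose exponential gain $2^{-s/D^2}$ comes from the nilpotent Gauss-sum bound in Proposition~\ref{minarcs}(ii), not the Weyl inequality. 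The analogous two-tier structure then repeats in the non-central variable (Lemmas~\ref{MinArc2}, \ref{MajArc3}, and part (ii) of Lemma~\ref{MajArc2}). Without these transition estimates the geometric summation over the denominator parameters $s$ and $t$ that closes the argument does not converge.

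A secondary inaccuracy: your step~(1) describes a Farey cut in $\theta^{(1)}$ and $\theta^{(2)}$ simultaneously at scale $2^k$. The paper explicitly warns that a simultaneous cut gives uncontrollable phases on the major arcs; the cuts are performed sequentially and even with different cutoff thicknesses ($\delta k$ in the center versus $\delta' k$ in the base, with $\delta\ll\delta'$), which is what keeps the bilinear coupling through $R_0$ in \eqref{pro15.10} from contaminating the continuous approximation.
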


In the rest of this section we outline the proof of this theorem. Our main new construction is an iterative procedure, starting from the center of the group and moving down along its central series, that allows us to use some of the ideas of the classical circle method recursively at every stage. In our case of nilpotent groups of step two, the procedure consists of two basic stages and one additional step corresponding to ``major arcs". 

Notice that the kernels $K_k$ have product structure
\begin{equation}\label{picu5}
K_k(g):=L_k(g^{(1)})\ind{\{0\}}(g^{(2)}),\qquad L_k(g^{(1)}):=\sum_{n\in\Z}2^{-k}\chi(2^{-k}n)\ind{\{0\}}(g^{(1)}-A^{(1)}_0(n)),
\end{equation}
where $A^{(1)}_0(n):=(n,\ldots,n^d)\in\mathbb{Z}^d$ and $g=(g^{(1)},g^{(2)})\in\G_0$ as in \eqref{picu4}. 

\subsection{First stage reduction} We first decompose the singular kernel $\ind{\{0\}}(g^{(2)})$ in the central variable $g^{(2)}$ into smoother kernels. For any $s\in\NN$ and $m\in\Z_+$ we define the set of rational fractions 
\begin{equation}\label{picu6}
\mathcal{R}_s^m:=\{a/q:\,a=(a_1,\ldots, a_m)\in\Z^m,\,q\in[2^s,2^{s+1})\cap\Z,\,\mathrm{gcd}(a_1,\ldots,a_m,q)=1\}.
\end{equation}
We define also $\mathcal{R}^m_{\leq a}:=\bigcup_{0\le s\leq a}\mathcal{R}_s^m$. For $x^{(1)}=(x^{(1)}_{l_10})_{l_1\in\{1,\ldots,d\}}\in\R^{d}$, $x^{(2)}=(x^{(2)}_{l_1l_2})_{(l_1,l_2)\in Y'_d}\in\R^{d'}$ and $\Lambda\in(0,\infty)$ we define the partial dilations
\begin{equation}\label{tra3.55}
\Lambda\circ x^{(1)}=(\Lambda^{l_1}x^{(1)}_{l_10})_{l_1\in\{1,\ldots,d\}}\in \R^{d},\qquad\Lambda\circ x^{(2)}=(\Lambda^{l_1+l_2}x^{(2)}_{l_1l_2})_{(l_1,l_2)\in Y'_d}\in \R^{d'},
\end{equation}
which are induced by the group-dilations defined in \eqref{tra3.5}. 

We fix $\eta_0:\mathbb{R}\to[0,1]$ a smooth even function
such that $\ind{[-1, 1]}\le \eta_0\le \ind{[-2, 2]}$. For $t\in\mathbb{R}$ and integers $j\geq 1$ we define
\begin{equation}\label{cutR}
\eta_j(t):=\eta_0(2^{-j}t)-\eta_0(2^{-j+1}t),\qquad 1=\sum_{j=0}^\infty \eta_j .
\end{equation}
For any $A\in[0,\infty)$ we define
\begin{equation}\label{cutR2}
\eta_{\leq A}:=\sum_{j\in[0,A]\cap \Z}\eta_j.
\end{equation}
By a slight abuse of notation we also let $\eta_j$ and $\eta_{\leq A}$ denote the smooth radial functions on $\R^m$, $m\geq 1$, defined by $\eta_j(x)=\eta_j(|x|)$ and $\eta_{\leq A}(x)=\eta_{\leq A}(|x|)$. We fix also two small constants $\delta=\delta(d)\ll\delta'=\delta'(d)$ such that $\delta'\in(0,(10d)^{-10}]$ and $\delta\in(0,{(\delta')}^4]$, and a large constant $D=D(d)\gg\delta^{-8}$, which depend on arithmetic properties of the polynomial sequence $A_0$ (more precisely on the structural constants in Propositions \ref{minarcs}--\ref{minarcscon}) such that
\begin{equation}\label{ConstantsStr}
1\ll 1/\delta'\ll 1/\delta\ll r=r(\delta, \delta',d)\ll D.
\end{equation}

For $k\geq D^2$ we fix two cutoff functions $\phi_k^{(1)}:\R^{d}\to[0,1]$, $\phi_k^{(2)}:\R^{d'}\to[0,1]$, such that
\begin{equation}\label{picu6.6}
\phi_k^{(1)}(g^{(1)}):=\eta_{\leq\delta k}(2^{-k}\circ g^{(1)}),\qquad\phi_k^{(2)}(g^{(2)}):=\eta_{\leq\delta k}(2^{-k}\circ g^{(2)}).
\end{equation}
For $k\in \NN$ so that $k\geq D^2$ and for any $1$-periodic sets of rationals $\A \subseteq \mathbb{Q}^d$, $\B \subseteq \mathbb{Q}^{d'}$
we define the periodic Fourier multipliers by 
\begin{equation} \label{def:progen}
\begin{split}
\Psi_{k,\A}(\xi^{(1)})
&:=\sum_{a/q\in \A}\eta_{\leq\delta' k}(2^k\circ(\xi^{(1)}-a/q)), 
\qquad \xi^{(1)} \in \mathbb{T}^{d}, 
\\ 
\Xi_{k,\B}(\xi^{(2)})
& :=\sum_{b/q\in \B} \eta_{\leq\delta k}(2^{k}\circ(\xi^{(2)}-b/q)), 
\qquad \xi^{(2)} \in \mathbb{T}^{d'}.
\end{split}
\end{equation}
For $k\geq D^2$ and $s\in[0,\delta k]\cap\Z$ we define the periodic Fourier multipliers $\Xi_{k,s}:\R^{d'}\to [0,1]$,
\begin{equation}\label{picu7}
\Xi_{k,s}(\xi^{(2)}):=\Xi_{k,\mathcal{R}_s^{d'}}(\xi^{(2)})=\sum_{a/q\in\mathcal{R}_s^{d'}}\eta_{\leq\delta k}(2^{k}\circ(\xi^{(2)}-a/q)).
\end{equation}

For $k\geq D^2$ we write
\begin{equation}\label{picu8}
\begin{split}
\ind{\{0\}}(g^{(2)})&=\int_{\T^{d'}}\ex(g^{(2)}.\xi^{(2)})\, d\xi^{(2)}\\
&=\sum_{s\in[0,\delta k]\cap\Z}\int_{\T^{d'}}\ex(g^{(2)}.\xi^{(2)})\Xi_{k,s}(\xi^{(2)})\, d\xi^{(2)}+\int_{\T^{d'}}\ex(g^{(2)}.\xi^{(2)})\Xi_k^c(\xi^{(2)})\, d\xi^{(2)},
\end{split}
\end{equation}
recall that $g^{(2)}.\xi^{(2)}$ denotes the usual scalar product of vectors in $\R^{d'}$ and
\begin{equation}\label{picu9}
\Xi_k^c:=1-\sum_{s\in[0,\delta k]\cap\Z}\Xi_{k,s}.
\end{equation}
Then we decompose $K_k=K_k^c+\sum_{s\in[0,\delta k]\cap\Z}K_{k,s}$, where, with the notation in \eqref{picu5}, we have
\begin{equation}\label{picu10}
K_{k,s}(g):=L_k(g^{(1)})N_{k,s}(g^{(2)}),\qquad K_{k}^c(g):=L_k(g^{(1)})N_k^c(g^{(2)}),
\end{equation}
and
\begin{equation}\label{picu10.2}
\begin{split}
N_{k,s}(g^{(2)})&:=\phi_k^{(2)}(g^{(2)})\int_{\T^{d'}}\ex(g^{(2)}.\xi^{(2)})\Xi_{k,s}(\xi^{(2)})\, d\xi^{(2)},\\ N_k^c(g^{(2)})&:=\phi_k^{(2)}(g^{(2)})\int_{\T^{d'}}\ex(g^{(2)}.\xi^{(2)})\Xi_{k}^c(\xi^{(2)})\, d\xi^{(2)}.
\end{split}
\end{equation}

We first show that we can bound the contributions of the minor arcs in
the central variables:

\begin{lemma}\label{MinArc1}
For any integer $k\geq D^2$ and $f\in\ell^2(\G_0)$ we have
\begin{equation}\label{picu12}
\|f\ast K_k^c\|_{\ell^2(\G_0)}\lesssim 2^{-k/D^2}\|f\|_{\ell^2(\G_0)}.
\end{equation}
\end{lemma}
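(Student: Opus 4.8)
The goal is to bound the operator $f \mapsto f \ast K_k^c$ on $\ell^2(\G_0)$ with a gain of $2^{-k/D^2}$, where $K_k^c(g) = L_k(g^{(1)}) N_k^c(g^{(2)})$ and $N_k^c$ picks out the ``minor arcs'' part of the central frequency variable (those $\xi^{(2)}$ not within $2^{-k+\delta k}$ of any rational with denominator $< 2^{\delta k}$). Since $K_k^c$ is an $\ell^1(\G_0)$ kernel, the operator $T$ with $Tf = f \ast K_k^c$ is bounded; the point is the quantitative decay. The natural route is a high-order $T^\ast T$ argument: I would estimate $\|T\|_{\ell^2 \to \ell^2}^{2r} = \|(T^\ast T)^r\|_{\ell^2 \to \ell^2}$ for $r$ large as in \eqref{ConstantsStr}, using Schur's test (or Young's inequality) on the kernel $A^r$ of $(T^\ast T)^r$, i.e. $\|T\|^{2r} \le \|A^r\|_{\ell^1(\G_0)}$. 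Because $K_k^c$ has the product structure \eqref{pro15.6}, the kernel $A^r$ is given by the oscillatory-integral formula \eqref{pro15.7}--\eqref{pro15.11}, with $\Sigma^r = \Pi^r \cdot \Omega^r$.

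\emph{Key steps.} First, I would integrate out the central variable $y^{(2)}$ in \eqref{pro15.7}: summing $\ind{\{0\}}$-type expansions collapses the $\theta^{(2)}$ integral onto the constraint coming from \eqref{pro15.5}, so that the $\ell^1$ norm of $A^r$ is controlled by $\int_{\T^{d'}} |\Omega^r(\theta^{(2)})| \, \big(\sup \text{over } \theta^{(1)} \text{ of a bounded number of first-variable sums}\big)\, d\theta^{(2)}$, times the trivial $\ell^1$ mass of the first-variable kernels (which is $O(1)$). Second — the heart of the matter — I would analyze $\Omega^r(\theta^{(2)}) = |\sum_{g^{(2)}} N_k^c{}^{\text{component}}(g^{(2)}) \ex(-\theta^{(2)}.g^{(2)})|^{2r}$. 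By construction $N_k^c = \phi_k^{(2)} \cdot \widehat{\Xi_k^c}$ on the relevant scale, so its Fourier transform is (a smoothed version of) $\Xi_k^c$, which is supported precisely on the minor arcs: every $\theta^{(2)}$ in its support is, by Dirichlet's theorem, within $1/q^2$ of some $a/q$ with $q \in [2^{\delta k}, (2^k)^{\text{something}}]$ — in other words the hypothesis \eqref{pro0.3} of the nilpotent Weyl estimate (Proposition \ref{minarcs}) is met, or more directly the minor-arc structure forces decay. Here I would invoke Proposition \ref{minarcs}(i): for $r = r(\delta, d)$ large enough, the relevant oscillatory sum over the $2r$ first-and-central variables gains a power $2^{-k/\delta}$ (schematically $P^{2r} P^{-1/\varepsilon}$ with $P \sim 2^k$ and $\varepsilon \sim \delta$), which after dividing by the $P^{2r}$ normalization yields the clean factor $2^{-k/\delta}$. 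Raising to the $1/(2r)$ power and recalling $1/\delta \gg D$ from \eqref{ConstantsStr} gives the desired $2^{-k/D^2}$ (with room to spare). Third, I would handle the residual $\theta^{(2)}$-integration: on the minor-arc support the Weyl bound is uniform, and the set has measure $O(1)$ in $\T^{d'}$, so no further loss beyond a constant.

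\emph{Main obstacle.} The technical crux is that the kernels $L_k$ and $N_k^c$ are \emph{not} literally supported on the lattice $A_0(\Z)$ in the clean form required to apply Proposition \ref{minarcs} as a black box — the cutoffs $\phi_k^{(1)}$, $\phi_k^{(2)}$ and the frequency projections $\Xi_k^c$ must be massaged so that, after the $T^\ast T$ expansion, the phase appearing is exactly $D(n,m).\theta$ for the polynomial $D$ of \eqref{pro0.4} restricted to the first- and central-coordinate blocks. Concretely, one must verify that the $\theta^{(1)}$-integral (the $\Pi^r$ factor) either contributes harmlessly — it is a sum with $\ell^1$ mass $\lesssim 2^{-k \cdot \#\{(l_1,0)\}\cdot(\text{something})}\cdot(\text{normalization})$, effectively $O(1)$ after proper counting — or combines with $\Omega^r$ so that the \emph{joint} sum over $(n,m) \in \Z^r \times \Z^r$ is exactly of the form $S_{P,r}(\theta)$ in Proposition \ref{minarcs} with the Weyl hypothesis coming from the central block. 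Checking this compatibility — that a minor-arc frequency in the $\xi^{(2)}$ variable translates, via the explicit product formula \eqref{pro15.10}--\eqref{pro15.11} and the structure of $D$ in \eqref{pro0.4}, into a nontrivial denominator condition \eqref{pro0.3} on one of the $\theta_{l_1 l_2}$ with $l_2 \ge 1$ — is where the real work lies, and is exactly the point at which the ``nilpotent circle method'' uses the non-degeneracy of the polynomial $D$ established in \cite{IMW}. Once that alignment is in place, the estimate follows by assembling: $\|f \ast K_k^c\|_{\ell^2}^{2r} \le \|A^r\|_{\ell^1(\G_0)} \lesssim 2^{-k/\delta}$, hence $\|f \ast K_k^c\|_{\ell^2} \lesssim 2^{-k/(2r\delta)} \le 2^{-k/D^2}$.
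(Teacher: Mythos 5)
Your approach matches the paper's exactly: a high-order $T^\ast T$ argument, bounding $\|f\ast K_k^c\|_{\ell^2(\G_0)}^{2r}$ by $\|A^r\|_{\ell^1(\G_0)}$, and then using the product structure $\Sigma^r=\Pi^r\,\Omega^r$ from \eqref{pro15.9}--\eqref{pro15.11} so that $\Omega^r$ localizes $\theta^{(2)}$ to the minor arcs and the nilpotent Weyl estimate of Proposition~\ref{minarcs}(i) applied to $\Pi^r$ supplies the exponential gain. The paper states precisely this plan (the $\ell^1(\G_0)$ norm of the kernel of $\{(\mathcal{K}_k^c)^\ast\mathcal{K}_k^c\}^r$ is shown to be $\lesssim 2^{-k}$, with Proposition~\ref{minarcs}(i) as the main input), so you have identified the right strategy and the right key lemma.

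One small correction to your ``main obstacle'' paragraph: $L_k$ \emph{is} literally supported on $A_0^{(1)}(\Z)$, by \eqref{picu5}; the cutoff $\phi_k^{(1)}$ in \eqref{picu19} is harmless there. So after the $T^\ast T$ expansion the summation variables in $\Pi^r$ do reduce cleanly to integers $(n_j,m_j)$ and the phase is exactly $\pm D(n,m).\theta$ from \eqref{pro0.4}; the alignment with Proposition~\ref{minarcs} is automatic rather than requiring massaging. What genuinely needs care (and you gesture at it correctly) is \emph{not} $L_k$, but the central-variable factor: $N_k^c$ is not on any polynomial lattice, and its only role is in $\Omega^r$, whose boundedness and minor-arc support you must verify quantitatively (this is where the normalizations of $\Xi_k^c$, $\phi_k^{(2)}$, and the size of the support of $A^r$ enter to convert the pointwise Weyl bound on $\Pi^r$ into an $\ell^1$ bound on $A^r$). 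Also note the sense of \eqref{ConstantsStr}: it is $1/\delta\ll r\ll D$, not $1/\delta\gg D$; the final bookkeeping $2r\delta\le D^2$ still goes through, and the paper's cleaner conclusion $\|A^r\|_{\ell^1}\lesssim 2^{-k}$ gives $2^{-k/(2r)}\le 2^{-k/D^2}$ directly.
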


Then we prove our first transition estimate, i.e. we show that we can bound the
contributions of the kernels $K_{k,s}$ corresponding to scales $k\ge0$ not very large. More precisely, for any $s\geq 0$ we define
\begin{equation}\label{eq:1}
\kappa_s:=2^{2D(s+1)^2}.
\end{equation}

\begin{lemma}\label{MajArc1}
For any integer $s\ge0$  and  $f\in\ell^2(\G_0)$ we have
\begin{equation}\label{picu13}
\big\|\sup_{\max(D^2,s/\delta)\leq k< 2\kappa_s}|f\ast K_{k,s}|\big\|_{\ell^2(\G_0)}\lesssim 2^{-s/D^2}\|f\|_{\ell^2(\G_0)}.
\end{equation}
\end{lemma}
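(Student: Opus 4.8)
The plan is to reduce the maximal inequality \eqref{picu13} to a small number of $\ell^2(\G_0)\to\ell^2(\G_0)$ operator bounds, each carrying a gain $2^{-s/D'}$ for some constant $D'=D'(d)$ with $D'\ll D^2$, and then to prove those bounds by a high order $T^\ast T$ argument fed into the nilpotent Gauss sum estimate of Proposition~\ref{minarcs}(ii) and the oscillatory integral estimate of Proposition~\ref{minarcscon}. Write $M_{k,s}f:=f\ast K_{k,s}$. Since the admissible scales $k$ form a \emph{finite} set inside $[0,2\kappa_s)$ with $2\kappa_s=2^{2D(s+1)^2+1}$ by \eqref{eq:1}, the elementary bound \eqref{eq:36} together with the Rademacher--Menshov inequality \eqref{maj1} --- applied after extending the sequence $k\mapsto (f\ast K_{k,s})(x)$ outside its range by constants --- controls the left side of \eqref{picu13} by $\|M_{k_0,s}f\|_{\ell^2(\G_0)}$ plus $\sqrt2$ times the sum over $i\in\{0,\dots,m\}$ of the $\ell^2(\G_0)$--norms of the $2^i$--lacunary square functions $\big(\sum_j|M_{(j+1)2^i,s}f-M_{j2^i,s}f|^2\big)^{1/2}$, where $k_0:=\max(D^2,\lceil s/\delta\rceil)$ and $m:=2D(s+1)^2+1$. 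The target $2^{-s/D^2}$ decays exponentially in $s$, whereas $m+1\lesssim_d (s+1)^2$ grows only polynomially, and for $s$ bounded the whole estimate amounts to the boundedness of finitely many operators; hence it suffices to prove, \emph{uniformly in $i$} and in the admissible scales, the single--scale bound $\|M_{k_0,s}\|_{\ell^2(\G_0)\to\ell^2(\G_0)}\lesssim 2^{-s/D'}$ and the lacunary square function bound $\big\|\big(\sum_j|M_{(j+1)2^i,s}f-M_{j2^i,s}f|^2\big)^{1/2}\big\|_{\ell^2(\G_0)}\lesssim 2^{-s/D'}\|f\|_{\ell^2(\G_0)}$.

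Both estimates follow from the same mechanism. Fix $r=r(\delta',d)$ large as in Propositions~\ref{minarcs}--\ref{minarcscon} (so $r\ll D$ by \eqref{ConstantsStr}) and raise to the power $2r$: $\|M_{k,s}\|_{\ell^2(\G_0)\to\ell^2(\G_0)}^{2r}=\|(M_{k,s}^\ast M_{k,s})^r\|$, and $(M_{k,s}^\ast M_{k,s})^r$ is convolution on $\G_0$ by the kernel $A^r$ of Section~\ref{sus1} taken with $K=K_{k,s}$. Because $K_{k,s}$ has the product structure \eqref{picu10}--\eqref{picu10.2}, the identities \eqref{pro15.7}--\eqref{pro15.11} exhibit $A^r$ as a Fourier integral over $\T^d\times\T^{d'}$ with symbol $\Pi^r(\theta^{(1)},\theta^{(2)})\Omega^r(\theta^{(2)})$, and the factor $\Omega^r(\theta^{(2)})=|\widehat{N_{k,s}}(\theta^{(2)})|^{2r}$ --- being $\big|(\widehat{\phi_k^{(2)}}\ast\Xi_{k,s})(\theta^{(2)})\big|^{2r}$ --- is supported, up to rapidly decaying errors, on the union of arcs of radius $\lesssim 2^{\delta k}$ in the rescaled central coordinates around reduced fractions $a^{(2)}/q$ with $q\in[2^s,2^{s+1})$. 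Passing to the fibers over the central characters $\theta^{(2)}$ (on which $\G_0$--convolution acts by twisted convolution on $\ell^2(\Z^d)$) and using once more the high order $T^\ast T$ identities of Section~\ref{sus1}, the operator norm is governed by the exponential sum $\Pi^r(\theta^{(1)},\theta^{(2)})$, which after the substitution forced by the cutoff $L_k$ is the sum $S_{P,r}$ (or $\widetilde{S}_{P,r}$) of Section~\ref{sus1} with $P\sim 2^k$, uniformly over $\theta^{(1)}\in\T^d$ and over $\theta^{(2)}=a^{(2)}/q+\beta$ on the above arcs. For the bulk of the admissible scales $q\sim 2^s$ is far below $2^{\varepsilon k}$, so Proposition~\ref{minarcs}(i) yields nothing from the central coordinate; instead one reduces the summation variables modulo $q$. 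Every monomial in the central components of $D$ in \eqref{pro0.4} has degree at least one in each variable, so the $a^{(2)}/q$--part of the phase depends only on residues and factors out the nilpotent Gauss sum $G\big((0,a^{(2)})/q\big)$ --- of size $\lesssim q^{-1/\varepsilon}\le 2^{-s/\varepsilon}$ by Proposition~\ref{minarcs}(ii), the fraction being reduced since $\gcd\big((0,a^{(2)}),q\big)=1$ --- while the residual archimedean sum in $(\theta^{(1)},\beta)$ is $O(P^{2r}q^{-2r})$ trivially and $O(\langle\beta\rangle^{-1/\varepsilon})$ by comparison with the continuous integral of Proposition~\ref{minarcscon}. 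Normalizing gives $\|M_{k,s}\|_{\ell^2(\G_0)\to\ell^2(\G_0)}\lesssim 2^{-s/(2r\varepsilon)}+2^{-ck/(2r)}$ for some $c=c(d)>0$: the first term is the desired gain (since $r\varepsilon\ll D^2$), and the $k$--dependent term is summable in $k\ge k_0\ge s/\delta$ with a further gain in $s$. For the lacunary square functions one runs the same scheme on $\sum_j\big(M_{(j+1)2^i,s}-M_{j2^i,s}\big)^\ast\big(M_{(j+1)2^i,s}-M_{j2^i,s}\big)$: the extra scale--differencing forces a Littlewood--Paley type localization which, being mean zero and scale separated, is almost orthogonal in $j$ with constants independent of $i$, so only $O(1)$ indices survive the $r$--fold $T^\ast T$ expansion and the Gauss sum gain $2^{-s/\varepsilon}$ is retained.

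The main obstacle is the modular reduction just sketched. The archimedean part of the phase --- the genuinely real--coefficient polynomial pairing $\theta^{(1)}$ with the first--variable moment curve, together with the offset $\beta$ --- does \emph{not} reduce modulo $q$ and oscillates on the full range $|n_j|\lesssim 2^k$, so the exponential sum does not literally split as (Gauss sum)$\,\times\,$(coarse sum). Turning this into an honest factorization requires separating the arithmetic scale $q\sim 2^s$ from the archimedean scale $2^k$ through a multi--scale decomposition --- which is exactly where the slack $2^{\delta k}$ in the width of the central arcs and the hierarchy $1\ll 1/\delta'\ll 1/\delta\ll r\ll D$ of \eqref{ConstantsStr} get consumed --- and it leaves commutator errors measuring the variation of the smooth factors across residue classes; because the scale is confined to $s/\delta\le k<2\kappa_s$, these errors are of size $\lesssim 2^{-k/D'}$ and hence harmlessly summable against $2^{-s/D^2}$. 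Preserving the Gauss--sum cancellation $2^{-s/\varepsilon}$ through this separation --- and, in the square--function case, through the additional Littlewood--Paley bookkeeping inside the high order $T^\ast T$ expansion --- is the delicate point, and it rests on the non-degeneracy properties of the explicit polynomials $D$ and $\widetilde{D}$ that underlie Proposition~\ref{minarcs}.
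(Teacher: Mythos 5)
Your proposal tracks the paper's high-level skeleton fairly closely: Rademacher--Menshov reduction, high order $T^\ast T$, and the nilpotent Gauss sum estimate of Proposition~\ref{minarcs}(ii) as the source of the $2^{-s/D^2}$ gain. But there are two genuine gaps that prevent it from constituting a proof.

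First, the step ``Passing to the fibers over the central characters $\theta^{(2)}$ \ldots\ the operator norm is governed by the exponential sum $\Pi^r(\theta^{(1)},\theta^{(2)})$'' is not justified, and it is precisely the sort of shortcut the paper says is unavailable. Even after freezing a central character, the fiber operator is a twisted convolution on $\ell^2(\Z^d)$, not a Fourier multiplier in $\theta^{(1)}$; its norm is \emph{not} controlled by $\sup_{\theta^{(1)}}|\Pi^r(\theta^{(1)},\theta^{(2)})|$. This is exactly the obstruction the introduction describes when it says there is ``no good Fourier transform on nilpotent groups, compatible with the structure of the underlying convolution operators.'' The paper circumvents this by bounding the $\ell^1(\G_0)$ norm of the kernel of $\{(\mathcal{H}_{k,s})^\ast\mathcal{H}_{k,s}\}^r$ directly --- using the explicit convolution identity \eqref{pro11} together with \eqref{pro15.7}--\eqref{pro15.11} --- and then feeding these kernel estimates into a high order version of the Cotlar--Stein lemma. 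Your Plancherel-style appeal needs to be replaced by an $\ell^1$ kernel estimate.

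Second, the factorization of the resulting oscillatory sum into (nilpotent Gauss sum) $\times$ (archimedean/continuous piece) is the actual content of the lemma, and your proposal identifies it as ``the delicate point'' without carrying it out. The multi-scale separation between the arithmetic scale $q\sim 2^s$ and the archimedean scale $2^k$, and the control of the resulting commutator errors while preserving the $q^{-1/\varepsilon}$ cancellation, is exactly the work that needs to be done. Relatedly, the paper uses Khintchine's inequality to pass from the square function to a randomized sum $\big\|\sum_{k\in[J,2J]}\varkappa_k(f\ast H_{k,s})\big\|_{\ell^2(\G_0)}$ with $H_{k,s}:=K_{k+1,s}-K_{k,s}$, which is the right object for a Cotlar--Stein style almost-orthogonality argument; your treatment of the square function via ``almost orthogonal in $j$ \ldots\ only $O(1)$ indices survive'' is too vague --- the off-diagonal decay has to be established quantitatively from the kernel of $\{(\mathcal{H}_{k,s})^\ast\mathcal{H}_{k,s}\}^r$, and it is there, not after fiberwise reduction, that the Gauss sum gain has to be extracted.
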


In the commutative setting, minor arcs estimates such as \eqref{picu12} follow using Weyl estimates and the Plancherel theorem. As we do not have a useful Fourier transform on the group $\G_0$, our main tool to prove the bounds \eqref{picu12} is a high order $T^\ast T$ argument. More precisely, we analyze the kernel of the convolution operator $\{(\mathcal{K}_k^c)^\ast\mathcal{K}_k^c\}^r$, where $\mathcal{K}_{k}^cf:=f\ast K_k^c$ and $r$ is sufficiently large, and show that its $\ell^1(\G_0)$ norm  is $\lesssim 2^{-k}$. The main ingredient in this proof is the non-commutative Weyl estimate in Proposition \ref{minarcs} (i).

To prove the transition estimates \eqref{picu13}, we use the Rademacher-Menshov inequality and Khintchine's ine\-qua\-lity (leading to logarithmic losses) to reduce to proving the bounds
\begin{equation}\label{illust9.5}
\Big\|\sum_{k\in[J,2J]}\varkappa_k(f\ast H_{k,s})\Big\|_{\ell^2(\G_0)}\lesssim 2^{-4s/D^2}\big\|f\big\|_{\ell^2(\G_0)}
\end{equation}
for any $J \ge \max(D^2,s/\delta)$ and any coefficients $\varkappa_k\in[-1,1]$, where $H_{k,s}:=K_{k+1,s}-K_{k,s}$. For this, we use a high order version of the Cotlar--Stein lemma, which relies again on precise analysis of the kernel of the convolution operator $\{(\mathcal{H}_{k,s})^\ast\mathcal{H}_{k,s}\}^r$, where $\mathcal{H}_{k,s}f:=f\ast H_{k,s}$ and $r$ is sufficiently large. The key exponential gain of $2^{-4s/D^2}$ in \eqref{illust9.5} is due to the non-commutative Gauss sums estimate, see Proposition \ref{minarcs} (ii).

\subsection{Second stage reduction} In view of Lemmas \ref{MinArc1}--\ref{MajArc1} it remains to prove that
\begin{equation}\label{illust10}
\big\|\sup_{k\geq \kappa_s}|f\ast K_{k,s}|\big\|_{\ell^2(\G_0)}\lesssim 2^{-s/D^2}\|f\|_{\ell^2(\G_0)}
\end{equation}
for any fixed integer $s\geq 0$. The kernels $K_{k,s}$ are now reasonably well adapted to a natural family of non-isotropic balls in the central variables, at least when $2^s\approx 1$, and we need to start decomposing in the non-central variables.

We examine the kernels $L_k(g^{(1)})$ defined in \eqref{picu5}, and rewrite them in the form
\begin{equation}\label{picu19}
\begin{split}
L_k(g^{(1)})&=\sum_{n\in\Z}2^{-k}\chi(2^{-k}n)\ind{\{0\}}(-A^{(1)}_0(n)+g^{(1)})\\
&=\phi_k^{(1)}(g^{(1)})\int_{\T^d}\ex(g^{(1)}{.}\xi^{(1)})S_k(\xi^{(1)})\,d\xi^{(1)},
\end{split}
\end{equation}
where $g^{(1)}.\xi^{(1)}$ denotes the usual scalar product of vectors in $\R^{d}$, and 
\begin{align}
\label{picu18}
\begin{split}
S_k(\xi^{(1)})&:=\sum_{n\in\Z}2^{-k}\chi(2^{-k}n)\ex(-A^{(1)}_0(n){.}\xi^{(1)}).
\end{split}
\end{align}
For any integers $Q\in\Z_+$ and $m\in\Z_+$ we define the set of fractions
\begin{equation}\label{picu20}
\widetilde{\mathcal{R}}^m_{Q}:=\{a/Q:\,a=(a_1,\ldots,a_m)\in\Z^m\}.
\end{equation}
For any integer $s\ge0$ we fix a large denominator
\begin{equation}
\label{eq:2}
Q_s:=\big(\big\lfloor 2^{D(s+1)}\big\rfloor\big)!=1\cdot 2\cdot\ldots\cdot \big\lfloor 2^{D(s+1)}\big\rfloor,
\end{equation}
and using \eqref{def:progen} define the periodic multipliers
\begin{equation}\label{picu21}
\begin{split}
\Psi_{k,s}^{\low}(\xi^{(1)})&:=\Psi_{k,\widetilde{\mathcal{R}}^d_{Q_s}}(\xi^{(1)})
=\sum_{a/q\in\widetilde{\mathcal{R}}^d_{Q_s}}\eta_{\leq\delta' k}(2^k\circ(\xi^{(1)}-a/q)),\\
\Psi_{k,s,t}(\xi^{(1)})&:=\Psi_{k,\mathcal{R}^d_t\setminus \widetilde{\mathcal{R}}^d_{Q_s}}(\xi^{(1)})
=\sum_{a/q\in\mathcal{R}^d_t\setminus \widetilde{\mathcal{R}}^d_{Q_s}}\eta_{\leq\delta' k}(2^k\circ(\xi^{(1)}-a/q)),\\
\Psi_{k}^c(\xi^{(1)})&:=1-\Psi_{k,s}^{\low} (\xi^{(1)}) -\sum_{t\in[0,\delta'k]\cap\Z}\Psi_{k,s,t} (\xi^{(1)}) 
\ =1-\sum_{a/q\in\mathcal{R}^d_{\leq\delta' k}}\eta_{\leq\delta' k}(\tau^k\circ(\xi^{(1)}-a/q)).
\end{split}
\end{equation}
Since $k\geq \kappa_s=2^{2D(s+1)^2}$ we see that $Q_s\leq 2^{\delta^2 k}$. Therefore the supports of the cutoff functions $\eta_{\leq\delta' k}(2^k\circ(\xi^{(1)}-a/q))$ are all disjoint and the multipliers $\Psi_{k,s}^{\low}, \Psi_{k,s,t}, \Psi_{k}^c$ take values in the interval $[0,1]$. Notice also that $\Psi_{k,s,t}\equiv 0$ unless $t\geq D(s+1)$, and that the cutoffs used in these definitions depend on $\delta' k$ not on $\delta k$ as in the case of the central variables.

We examine the formula \eqref{picu19} and define the kernels $L_{k,s}^{\low}, L_{k,s,t}, L_{k}^c:\Z^d\to\C$ by
\begin{equation}\label{picu24}
L_\ast(g^{(1)})=\phi_k^{(1)}(g^{(1)})\int_{\T^d}\ex(g^{(1)}{.}\xi^{(1)})S_k(\xi^{(1)})\Psi_\ast(\xi^{(1)})\,d\xi^{(1)},
\end{equation}
where  $(L_\ast, \Psi_\ast)\in\{(L_{k,s}^{\low}, \Psi_{k,s}^{\low}), (L_{k,s,t}, \Psi_{k,s,t}), (L_{k}^c, \Psi_{k}^c)\}$. For any $k\geq \kappa_s$  we
obtain $K_{k, s}=G_{k,s}^{\low}+\sum_{t\leq\delta' k}G_{k,s,t}+G_{k,s}^c$, where 
the kernels $G_{k,s}^{\low}, G_{k,s,t}, G_{k,s}^c:\Z^{|Y_d|}\to\C$ are defined by
\begin{equation}\label{picu25}
\begin{split}
G_{k,s}^{\low}(g)&:=L_{k,s}^{\low}(g^{(1)})N_{k,s}(g^{(2)}),\\
G_{k,s,t}(g)&:=L_{k,s,t}(g^{(1)})N_{k,s}(g^{(2)}),\\
G_{k,s}^c(g)&:=L_{k}^c(g^{(1)})N_{k,s}(g^{(2)}).
\end{split}
\end{equation}

Our next step is to show that the contributions of the minor arcs corresponding to the kernels $G_{k,s}^c$ can be suitably bounded:

\begin{lemma}\label{MinArc2}
For any integers $s\geq 0$ and $k\geq \kappa_s $,  and for any $f\in\ell^2(\G_0)$ we have
\begin{equation}\label{picu26}
\|f\ast G_{k,s}^c\|_{\ell^2(\G_0)}\lesssim  2^{-k/D^2}\|f\|_{\ell^2(\G_0)}.
\end{equation}
\end{lemma}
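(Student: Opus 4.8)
The plan is to prove \eqref{picu26} by a high order $T^\ast T$ argument, exactly parallel to the proof of Lemma \ref{MinArc1}, but now using the \emph{commutative} Weyl estimate of Proposition \ref{minarcscom}(i) in the non-central variables rather than the nilpotent Weyl estimate. First I would fix $\varepsilon>0$ so that $1/\varepsilon$ exceeds the required gain (say $\varepsilon$ comparable to $\delta'$, so that the structural constant $r=r(\varepsilon,d)$ is consistent with \eqref{ConstantsStr}), and set $r$ accordingly. Writing $\mathcal{G}_{k,s}^c f := f\ast G_{k,s}^c$, it suffices to show that the $\ell^1(\G_0)$ norm of the kernel $\mathfrak{A}^r$ of the convolution operator $\{(\mathcal{G}_{k,s}^c)^\ast\mathcal{G}_{k,s}^c\}^r$ satisfies $\|\mathfrak{A}^r\|_{\ell^1(\G_0)}\lesssim 2^{-2rk/D^2}$; then taking $2r$-th roots gives the operator bound and hence \eqref{picu26}.

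The key point is that $G_{k,s}^c$ has the product structure $G_{k,s}^c(g)=L_k^c(g^{(1)})N_{k,s}(g^{(2)})$, so formulas \eqref{pro15.7}--\eqref{pro15.11} apply: the kernel $\mathfrak{A}^r$ can be written as an integral over $\T^d\times\T^{d'}$ of $\ex(y^{(1)}.\theta^{(1)})\ex(y^{(2)}.\theta^{(2)})\Pi^r(\theta^{(1)},\theta^{(2)})\Omega^r(\theta^{(2)})$, where $\Omega^r(\theta^{(2)})=|\widehat{N_{k,s}}(\theta^{(2)})|^{2r}$ is harmless (bounded by $1$ after normalizing, using $|N_{k,s}|\le 1$ and its support) and $\Pi^r$ is the oscillatory factor coming from $L_k^c=\phi_k^{(1)}\cdot(S_k\Psi_k^c)^\vee$ in the non-central variables. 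The crucial structural fact, visible in \eqref{pro0.4}, is that after the $T^\ast T$ expansion the phase in the variables $h_j^{(1)},g_j^{(1)}$ separates: the $\theta_{l_10}$-components of the phase depend on each $h_j^{(1)},g_j^{(1)}$ only through the one-dimensional polynomial $\theta_{l_10}n^{l_1}$, so $\Pi^r$ factors (up to the mixed $R_0$ terms) into a product over $j$ of classical Weyl sums $\sum_n \phi(n)\ex(-(\theta_{10}n+\cdots+\theta_{d0}n^d))$. Since $g^{(1)}$ lies in the support of $\Psi_k^c$, none of the frequencies $\theta^{(1)}$ lies in a major arc: by Dirichlet's principle one finds $(l,a,q)$ with $q\in[2^{\delta' k},2^{l\cdot(\text{something})}]$ obeying the hypothesis \eqref{comm3} at scale $P\approx 2^k$ (this is where the definition of $\Psi_k^c$ as the complement of all $\mathcal{R}^d_{\le\delta'k}$ arcs is used, together with $k\ge\kappa_s$ so the arcs are genuinely separated). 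Proposition \ref{minarcscom}(i) then yields a factor $2^{k(1-\varepsilon/\overline{C})}$ in place of the trivial $2^k$ for at least one index $j$, i.e. a genuine power gain $2^{-\varepsilon k/\overline C}$, and summing over the remaining ranges (all of size $\lesssim 2^k$) and over the lattice of $y$ with $y^{(1)}\in\mathrm{supp}\,\phi_k^{(1)}$, $y^{(2)}$ constrained by $\phi_k^{(2)}$, produces the claimed $\ell^1$ bound with exponent $2r/D^2$ after choosing $r$ large.

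I would structure the write-up as: (1) reduce to the $\ell^1$ kernel estimate via the high-power $T^\ast T$ inequality $\|\mathcal{G}_{k,s}^c\|_{\ell^2\to\ell^2}^{2r}=\|\{(\mathcal{G}_{k,s}^c)^\ast\mathcal{G}_{k,s}^c\}^r\|_{\ell^2\to\ell^2}\le\|\mathfrak{A}^r\|_{\ell^1}$; (2) expand $\mathfrak{A}^r$ using \eqref{pro11} and \eqref{pro15}--\eqref{pro15.5}, integrate out the central frequency and the auxiliary counting variables to reduce to an oscillatory sum in $h_j^{(1)},g_j^{(1)}$; (3) perform the frequency decomposition: insert $\Psi_k^c$ and run a pigeonhole/Dirichlet argument to locate a minor-arc coordinate for \emph{every} choice of $\theta^{(1)}$ in the relevant support; (4) apply Proposition \ref{minarcscom}(i) to extract the power gain on one factor while bounding the rest trivially by the $L^1$ control of $\phi_k^{(j)}$ and $\psi_k^{(j)}$ in \eqref{pro0.1}; (5) collect the bounds and optimize $r$. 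The main obstacle I anticipate is step (3): one has to argue carefully that the support of $\Psi_k^c$ really forces, uniformly, a minor-arc condition of the precise form \eqref{comm3} with denominator in the window $[P^\varepsilon, P^{l-\varepsilon}]$ — this uses the disjointness of the arcs guaranteed by $Q_s\le 2^{\delta^2 k}$ and $k\ge\kappa_s$, and one must also handle the slight mismatch between the cutoff scale $\delta' k$ appearing in $\Psi_k^c$ and the scales in the Weyl hypothesis. A secondary technical point is controlling the mixed bilinear terms $\sum_{j_1<j_2}(y_{j_1}^{l_1}-x_{j_1}^{l_1})(y_{j_2}^{l_2}-x_{j_2}^{l_2})$ in \eqref{pro0.4}: these couple different $j$'s, but since they are paired with $\theta^{(2)}$ which in this lemma is \emph{not} restricted (we have integrated it out against $N_{k,s}$, whose Fourier support is all of $\T^{d'}$), one absorbs them by freezing all but one $j$-variable and noting the remaining sum is still a one-dimensional Weyl sum with the same leading coefficient $\theta_{l_10}$ — the minor-arc gain in that coefficient survives the perturbation.
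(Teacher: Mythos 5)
Your high-level plan---high order $T^\ast T$, $\ell^1$ bound on the kernel $\mathfrak{A}^r$, and the classical Weyl estimate of Proposition~\ref{minarcscom}(i)---matches the paper's brief description (the paper itself only says the proof is ``similar to'' Lemma~\ref{MinArc1} but with Proposition~\ref{minarcscom} in place of Proposition~\ref{minarcs}). However, there is a genuine gap in the step where you assert that ``$\Pi^r$ factors (up to the mixed $R_0$ terms) into a product over $j$ of classical Weyl sums $\sum_n \phi(n)\ex(-(\theta_{10}n+\cdots+\theta_{d0}n^d))$,'' citing \eqref{pro0.4} as the structural justification.

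The formulas \eqref{pro0.4}--\eqref{pro0.5} express the phase in terms of the one-dimensional moment-curve parameters $n_j,m_j$, and they are valid precisely because the kernel $L_k$ in \eqref{picu5} is a weighted sum of Dirac masses at moment-curve points $A_0^{(1)}(n)$. This is what makes the $T^\ast T$ argument for Lemma~\ref{MinArc1} (where $K_k^c=L_kN_k^c$ and the non-central factor $L_k$ is still on the moment curve) reduce cleanly to nilpotent Weyl sums. By contrast, $L_k^c=\phi_k^{(1)}\cdot(S_k\Psi_k^c)^{\vee}$ is \emph{not} supported on the moment curve: the Fourier cutoff $\Psi_k^c$ spreads the kernel over all of $\Z^d$. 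Consequently, in $\Pi^r$ the sums run over $h_j^{(1)},g_j^{(1)}\in\Z^d$ with $L_k^c$-weights, and they do \emph{not} collapse to sums over scalar parameters $n_j,m_j\in\Z$ carrying the polynomial phase of \eqref{pro0.4}. Your step (2)--(4) silently re-parametrizes by $n$, which is only available for $L_k$, not $L_k^c$; this is exactly the place where the delicacy of the second-stage decomposition lives, and it is not addressed by the proposal.

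The secondary issue you flag (the $\theta^{(2)}.R_0$ cross terms) is also more serious than ``freeze all but one $j$.'' Once one is summing over $h_j^{(1)}\in\Z^d$ rather than over $n_j\in\Z$, the term $\theta^{(2)}.R_0(h_j^{(1)},h_j^{(1)}-g_j^{(1)})$ is a genuine quadratic form in the $d$-dimensional variable $h_j^{(1)}$, so the frozen sum is a multidimensional Gauss-type sum rather than a one-dimensional Weyl sum; the claim that ``the minor-arc gain in $\theta_{l_10}$ survives the perturbation'' would need a real argument. The natural fix is to push the Fourier representation of $L_k^c$ through the $T^\ast T$ expansion---writing each $L_k^c(h_j^{(1)})$ as $\phi_k^{(1)}(h_j^{(1)})\int_{\T^d}\ex(h_j^{(1)}{.}\eta_j)S_k(\eta_j)\Psi_k^c(\eta_j)\,d\eta_j$, performing the $h_j^{(1)},g_j^{(1)}$-sums to produce (approximate) frequency constraints, and then exploiting that $|S_k(\eta_j)|\lesssim 2^{-\varepsilon k}$ on $\mathrm{supp}\,\Psi_k^c$ by Proposition~\ref{minarcscom}(i). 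But this requires controlling the frequency shifts induced by the $R_0$-phase and showing they do not move $\eta_j$ off the minor arcs, which is exactly where the restriction $k\ge\kappa_s$ and the scale mismatch $\delta'k$ vs.\ $\delta k$ that you mention become quantitatively important. As written, the proposal identifies the right ingredients but does not actually close the argument.
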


Then we prove our second transition estimate, bounding the contributions of the operators defined by the kernels $G_{k,s,t}$ for intermediate values of $k$.

\begin{lemma}\label{MajArc3}
For any integers  $s\ge 0$, and $t\geq D(s+1)$, and $f\in\ell^2(\G_0)$ we have
\begin{equation}\label{picu28}
\big\|\sup_{\max(\kappa_s,t/\delta')\leq k< 2\kappa_t}|f\ast G_{k,s,t}|\big\|_{\ell^2(\G_0)}\lesssim 2^{-t/D^2}\|f\|_{\ell^2(\G_0)},
\end{equation}
where $\kappa_t=2^{2D(t+1)^2}$ as in \eqref{eq:1}.
\end{lemma}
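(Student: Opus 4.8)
The plan is to mimic the proof of the first transition estimate Lemma \ref{MajArc1}, but now with the roles of the central and non-central variables exchanged: the exponential gain will come from the \emph{classical} Gauss sum estimate in Proposition \ref{minarcscom}(ii) applied in the non-central variables $g^{(1)}$, rather than from the nilpotent Gauss sums of Proposition \ref{minarcs}(ii). First, I would reduce the maximal function over the dyadic block $\max(\kappa_s,t/\delta')\le k<2\kappa_t$ to a square-function-type estimate. Using the Rademacher--Menshov inequality \eqref{maj1} together with Khintchine's inequality (accepting logarithmic losses in $\kappa_t\lesssim 2^{2D(t+1)^2}$, which are harmless against the gain $2^{-t/D^2}$ since $D\gg\delta^{-8}$), it suffices to prove
\begin{equation*}
\Big\|\sum_{k\in[J,2J]}\varkappa_k\,(f\ast H_{k,s,t})\Big\|_{\ell^2(\G_0)}\lesssim 2^{-4t/D^2}\|f\|_{\ell^2(\G_0)}
\end{equation*}
for every $J\ge\max(\kappa_s,t/\delta')$ and arbitrary coefficients $\varkappa_k\in[-1,1]$, where $H_{k,s,t}:=G_{k+1,s,t}-G_{k,s,t}$ is the dyadic difference of the kernels in \eqref{picu25}.

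Second, I would prove this block estimate by a high order Cotlar--Stein argument, exactly as in the passage following \eqref{illust9.5}. That is, writing $\mathcal H_{k,s,t}f:=f\ast H_{k,s,t}$, I analyze the $\ell^1(\G_0)$-norm of the kernel of $\{(\mathcal H_{k,s,t})^\ast\mathcal H_{k,s,t}\}^r$ for $r=r(\delta,\delta',d)$ as in \eqref{ConstantsStr}, using the product-kernel formulas \eqref{pro15.7}--\eqref{pro15.11}. Because $H_{k,s,t}$ still has product structure $L_{k,s,t}(g^{(1)})N_{k,s}(g^{(2)})$ (or a telescoping combination thereof), the kernel of the $T^\ast T$ power factors, via \eqref{pro15.9}, into a piece $\Pi^r$ over the non-central frequencies $\theta^{(1)}$ and a piece $\Omega^r$ over the central frequencies $\theta^{(2)}$. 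The central part is controlled by the already-established structure of $N_{k,s}$ (these kernels are well-adapted to the non-isotropic balls once $2^s\approx 1$, giving at worst an $O(2^{Cs})$ loss absorbed by $D(s+1)\le t$). The crucial point is the non-central part: $L_{k,s,t}$ is built from the multiplier $\Psi_{k,s,t}$ supported on arcs around fractions $a/q$ with $q\in\mathcal R^d_t\setminus\widetilde{\mathcal R}^d_{Q_s}$, i.e. $q\in[2^t,2^{t+1})$ with $q\nmid Q_s$. By the choice $Q_s=(\lfloor 2^{D(s+1)}\rfloor)!$, such a $q$ necessarily has a prime power factor exceeding $2^{D(s+1)}$, so it is genuinely ``new'' relative to scale $s$; the classical Weyl/Gauss sum bound \eqref{comm4.5} then yields a gain of $q^{-1/\overline C}\lesssim 2^{-t/\overline C}$ per copy of the sum, hence $2^{-2rt/\overline C}$ after the $r$-fold $T^\ast T$; choosing $r$ large relative to $\overline C$ and $D$ gives the required $2^{-4t/D^2}$.

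Third, the range restriction $k<2\kappa_t$ is what makes the minor-arc/major-arc split clean: for $k$ in this range and $q\in[2^t,2^{t+1})$ one has $q\in[2^{\delta' k}, (2^k)^{(l_1)-\delta'}]$-type containments needed to invoke the Weyl estimates, since $k/\delta'\le\kappa_t^{-}$-type lower bounds on $k$ force $2^{\delta'k}\le 2^t$, while $k<2\kappa_t$ together with $q<2^{t+1}$ keeps $q$ below the upper Weyl threshold. I would also need the spatial localization from $\phi_k^{(1)}$ in \eqref{picu24}: the kernels $L_{k,s,t}$ are supported (up to rapidly decaying tails) in a ball of radius $2^{(1+\delta)k}$, so the $T^\ast T$ kernel counting is a finite but large sum over $\Z^d$, handled by the oscillatory bounds \eqref{pro0.2} and \eqref{comm4} and the integral bounds of Proposition \ref{minarcscon} for the continuous ``major-arc pieces'' of each $L_{k,s,t}$.

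The main obstacle I expect is the bookkeeping in the $T^\ast T$ kernel expansion: after raising to the $r$-th power, the phase in $\Pi^r$ (cf. \eqref{pro15.10}) contains not only the ``diagonal'' polynomial terms $R_0(h_j^{(1)},h_j^{(1)}-g_j^{(1)})$ but also the cross terms $R_0(-h_l^{(1)}+g_l^{(1)},-h_j^{(1)}+g_j^{(1)})$ for $l<j$, which couple the $2r$ variables through the central frequencies $\theta^{(2)}$. One must verify that, on the relevant support of $\theta^{(2)}$ (near fractions with denominator $\approx 2^s$, so effectively ``rational with small height'' compared to scale $t$), these cross terms do not destroy the classical Weyl gain in the $\theta^{(1)}$-variables --- concretely, that one can still extract a single ``free'' $n_j$-sum of the form $\sum_n\chi(2^{-k}n)\ex(-(\theta_1 n+\dots+\theta_d n^d))$ with some $\theta_{l}$ close to a fraction of denominator $\approx 2^t$ in the admissible Diophantine window, after fixing all other variables. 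This is precisely the place where one uses that $q\nmid Q_s$ forces a large prime-power divisor, decoupling the ``$t$-scale'' arithmetic from the ``$s$-scale'' arithmetic; making this rigorous is the technical heart of the argument and parallels the treatment of \eqref{illust9.5}, with Proposition \ref{minarcscom} in place of Proposition \ref{minarcs}(ii).
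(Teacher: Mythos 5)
Your proposal matches the approach the paper describes for this lemma: a Rademacher--Menshov reduction to a dyadic-block estimate, followed by a high-order Cotlar--Stein/$T^\ast T$ argument whose exponential gain is supplied by the classical Gauss-sum bound \eqref{comm4.5} from Proposition~\ref{minarcscom}(ii) in place of the nilpotent one, with the first-stage restrictions (central frequencies near denominators $\approx 2^s$ and $k\ge\kappa_s$) being what keeps the phase in the $\Pi^r$ factor of \eqref{pro15.10} tractable. One small clarification: the gain $q^{-1/\overline C}\lesssim 2^{-t/\overline C}$ is immediate from $q\in[2^t,2^{t+1})$ and does not itself use the arithmetic condition $q\nmid Q_s$; that condition serves to keep $\Psi_{k,s,t}$ disjoint from $\Psi^{\low}_{k,s}$, and, as you correctly flag, enters the cross-term decoupling between the $\theta^{(1)}$-scale and $\theta^{(2)}$-scale arithmetic.
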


The proofs of these estimates are similar to the proofs of the corresponding first stage estimates \eqref{picu12} and \eqref{picu13}, using high order $T^\ast T$ arguments. However, instead of using the nilpotent oscillatory sums estimates in Proposition \ref{minarcs}, we use the classical estimates from Proposition \ref{minarcscom} here. We emphasize, however, that the underlying nilpotent structure is very important and that these estimates are only possible after performing the two reductions in the first stage, namely, the restriction to major arcs corresponding to denominators $\approx 2^s$ and the restriction to parameters $k\geq \kappa_s$.
We finally remark that the circle method could not have been applied simultaneously to both central and non-central variables, as we would not have been able control efficiently the phase functions arising in the corresponding exponential sums and oscillatory integrals, especially on major arcs.

\subsection{Final stage: major arcs contributions} After these reductions, it remains to bound the contributions of the ``major arcs" in both the central and the non-central variables. More precisely, we prove the following bounds:
\begin{lemma}\label{MajArc2} (i) For any integer $s\ge 0$ and $f\in\ell^2(\G_0)$ we have
\begin{equation}\label{picu27}
\big\|\sup_{k\geq \kappa_s}|f\ast G^{\low}_{k,s}|\big\|_{\ell^2(\G_0)}\lesssim  2^{-s/D^2}\|f\|_{\ell^2(\G_0)}.
\end{equation}

(ii) For any integers $s\ge 0$, $t\geq D(s+1)$, and $f\in\ell^2(\G_0)$ we have
\begin{equation}\label{picu27.5}
\big\|\sup_{k\geq \kappa_t}|f\ast G_{k,s,t}|\big\|_{\ell^2(\G_0)}\lesssim 2^{-t/D^2}\|f\|_{\ell^2(\G_0)}.
\end{equation}
\end{lemma}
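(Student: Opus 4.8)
The plan is to prove parts (i) and (ii) of Lemma~\ref{MajArc2} in parallel, since they differ only in which family of major arcs carries the arithmetic gain — the central one, with denominator $q\approx 2^s$, in part (i), or the non-central one, with denominator $q\approx 2^t$, $t\geq D(s+1)$, in part (ii). Below I write $2^{-\gamma}$ for the target gain ($\gamma=s/D^2$, resp. $t/D^2$) and $\kappa$ for the lower cutoff on $k$ ($\kappa_s$, resp. $\kappa_t$).

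\emph{Step 1: main term plus error.} Recall that $G^{\low}_{k,s}$ is built from the Weyl sum $S_k(\xi^{(1)})$ localized near the rationals $a/Q_s$ and from the central oscillatory factor localized near the rationals $b/q$, $q\approx 2^s$. On each such major arc I will use the classical approximation of $S_k$ by the product of a Gauss sum $G^{(1)}(a/Q_s)$ and a rescaled oscillatory integral, together with its analogue in the central variable, the residual errors being negligible once integrated out; here van der Corput estimates with the $C^1$ hypotheses do the discrete side, and Proposition~\ref{minarcscon} the continuous side. This produces $G^{\low}_{k,s}=\widetilde G_k+E_k$ where the convolution operators $f\mapsto f\ast E_k$ have $\ell^2(\G_0)\to\ell^2(\G_0)$ norm $\lesssim 2^{-ck}$ for some $c=c(d)>0$, so that, using $k\ge\kappa=2^{2D(s+1)^2}$,
\[
\Big\|\sup_{k\ge\kappa}|f\ast E_k|\Big\|_{\ell^2(\G_0)}\le\Big(\sum_{k\ge\kappa}\|f\ast E_k\|_{\ell^2(\G_0)}^2\Big)^{1/2}\lesssim 2^{-c\kappa}\,\|f\|_{\ell^2(\G_0)}\ll 2^{-\gamma}\|f\|_{\ell^2(\G_0)}.
\]

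\emph{Step 2: factorization of the main term.} Next I take the Fourier transform of $f$ in the central variable only. Since the group law on $\G_0$ reads $[g\cdot h]^{(2)}=g^{(2)}+h^{(2)}+R_0(g^{(1)},h^{(1)})$ (see \eqref{picu4.2}), convolution by a kernel on $\G_0$ becomes, for each fixed central frequency $\xi^{(2)}$, a twisted convolution in the non-central variable $g^{(1)}\in\Z^d$ with twisting form $R_0(\cdot,\cdot)\cdot\xi^{(2)}$; for $\widetilde G_k$ this frequency is pinned near a rational $b/q$, $q\approx 2^s$. A rational twisting $\ex\big(R_0(\cdot,\cdot)\cdot(b/q)\big)$ is $q$-periodic, so splitting $g^{(1)}=qu+v$, $v\in\Z_q^d$, decomposes the twisted convolution into a \emph{finite} twisted convolution over the residues $v$ — whose structure, after the $r$-fold $T^\ast T$ iteration, is precisely that of the nilpotent Gauss sums $G(a/q),\widetilde G(a/q)$ of \eqref{pro0.6} — composed with an ordinary convolution in the coarse variable $u$ against a smooth kernel at scale $2^k$. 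Carried out jointly over all major arcs, this exhibits $\widetilde G_k$ as a composition $\mathcal A\circ\mathcal C_k$, where $\mathcal A$ is an arithmetic operator encoding all arcs simultaneously (so that Plancherel in the central and non-central Fourier variables, together with the disjointness of the arc supports noted after \eqref{picu21}, is available) and $\mathcal C_k$ is the restriction to the lattice $\G_0=\Z^{|Y_d|}\subset\G_0^\#$ of convolution against a fixed smooth bump on $\G_0^\#$ dilated by $2^k\circ$ as in \eqref{tra3.5}.

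\emph{Step 3: the arithmetic factor.} To bound $\|\mathcal A\|_{\ell^2(\G_0)\to\ell^2(\G_0)}$ I will run the same high order $T^\ast T$ argument as in the minor-arc lemmas: the kernel of $(\mathcal A^\ast\mathcal A)^r$ is a superposition, over the relevant irreducible fractions $\mathfrak a/\mathfrak q$, of the nilpotent Gauss sums $\widetilde G(\mathfrak a/\mathfrak q)$, and in every term $\mathfrak q$ is divisible by a denominator $\approx 2^s$ (in part (i), from the central arc) or $\approx 2^t$ (in part (ii), from the non-central arc). By Proposition~\ref{minarcs}(ii) each such Gauss sum is $\lesssim\mathfrak q^{-1/\varepsilon}$, hence $\lesssim 2^{-s/\varepsilon}$ (resp. $2^{-t/\varepsilon}$); the orthogonality of the arc supports keeps the bookkeeping loss at worst polynomial in $2^s$ (resp. $2^t$), and with the hierarchy of constants \eqref{ConstantsStr} — $\varepsilon$ small, $r=r(\varepsilon,d)$, $D$ large relative to $r$ — this gives $\|\mathcal A\|_{\ell^2(\G_0)\to\ell^2(\G_0)}\lesssim 2^{-\gamma}$. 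Combined with Steps~1 and~4 this will yield \eqref{picu27} and \eqref{picu27.5}.

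\emph{Step 4: the continuous factor, and the main difficulty.} It remains to show $\big\|\sup_{k\ge\kappa}|\mathcal C_k f|\big\|_{\ell^2(\G_0)}\lesssim\|f\|_{\ell^2(\G_0)}$ uniformly in $s$. This is the crux. It requires first the $L^2(\G_0^\#)$-boundedness of the continuous maximal Radon transform along the moment curve $A_0$ on the Lie group $\G_0^\#$ — a statement belonging to continuous harmonic analysis, where (unlike on $\G_0$) the group Fourier transform and the honest dilation structure are available, cf.\ \cite{ChNaStWa} — and then a Magyar--Stein--Wainger type sampling principle transferring this bound to the lattice $\G_0\subset\G_0^\#$ (applicable since $2^k\ge1$), performed coset-by-coset with respect to the period sublattice and reassembled orthogonally, so that it costs nothing and is compatible with the arithmetic operator $\mathcal A$. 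I expect this step — isolating a continuous model operator in a way that respects the non-commutative group law, and then establishing and discretizing the continuous maximal estimate on $\G_0^\#$ — to be the principal obstacle, and it is exactly what forces the two preliminary stages of reduction (restriction to denominators $\approx 2^s$ and to scales $k\ge\kappa_s$) to be carried out beforehand.
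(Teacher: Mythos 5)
Your overall strategy is the same as the paper's: split the major-arc kernel into an arithmetic factor, whose smallness supplies the gain $2^{-s/D^2}$ (resp.\ $2^{-t/D^2}$) via the nilpotent Gauss sum estimate of Proposition~\ref{minarcs}(ii), and a continuous factor, whose maximal function is bounded uniformly by methods from the continuous Radon transform literature (Christ \cite{Ch}). You also correctly identify the continuous maximal bound plus its discretization as the analytic core. Where the proposal diverges from the paper, and where it contains a genuine gap, is in the mechanism of the factorization.

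You propose to Fourier-transform in the central variable, reduce to twisted convolutions, and then exhibit the main term as a \emph{composition} $\mathcal A\circ\mathcal C_k$, with $\mathcal A$ a fixed arithmetic convolution operator and $\mathcal C_k$ a smooth dilated convolution on $\G_0$. But to deduce the maximal estimate from $\|\mathcal A\|_{\ell^2\to\ell^2}\lesssim 2^{-\gamma}$ and $\big\|\sup_k|\mathcal C_k f|\big\|_{\ell^2}\lesssim\|f\|_{\ell^2}$, you would need to pass $\mathcal A$ through the supremum, i.e.\ to write $\sup_k|\mathcal A(\mathcal C_k f)|$ as (essentially) $\mathcal A$ applied after $\sup_k|\mathcal C_k(\cdot)|$. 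This is not automatic: $\mathcal A$ is an oscillatory convolution, not a positive operator, and on a non-commutative group two convolution operators do \emph{not} commute in general, so $\mathcal A\circ\mathcal C_k\neq\mathcal C_k\circ\mathcal A$. As written, Step~3 plus Step~4 do not combine to give \eqref{picu27} or \eqref{picu27.5}.

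The paper resolves exactly this issue by replacing the composition with a genuine \emph{tensor product}. One chooses a single, highly composite denominator $Q=Q_t=(\lfloor 2^{D(t+1)}\rfloor)!$ (see \eqref{eq:2}), so that every relevant rational appearing in the arcs has denominator dividing $Q$, and decomposes $\G_0$ as a set via the bijection $\JJ_Q\times\HH_Q\to\G_0$ of \eqref{gio3}, with $\HH_Q$ a normal subgroup and $\JJ_Q$ a transversal. The kernel then factorizes up to summable errors as $G_{k,s,t}(b_1\cdot h\cdot b_2)\approx W_{k,Q_t}(h)\,V_{s,t}(b_1\cdot b_2)$, i.e.\ the arithmetic factor $V_{s,t}$ acts on the $\JJ_{Q_t}$-variable and the continuous factor $W_{k,Q_t}$ acts on the $\HH_{Q_t}$-variable. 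Because the two factors act on \emph{disjoint} variables, they commute, and $\big\|\sup_k|(V\otimes W_k)f|\big\|_{\ell^2}\lesssim\|V\|_{\ell^2(\JJ_{Q_t})\to\ell^2(\JJ_{Q_t})}\,\big\|\sup_k|W_k(\cdot)|\big\|_{\ell^2(\HH_{Q_t})\to\ell^2(\HH_{Q_t})}\,\|f\|_{\ell^2}$ holds by Fubini plus the one-variable bounds. Your Step~2 does gesture at the coset splitting $g^{(1)}=qu+v$, and your Step~4 mentions that the discretization should be ``compatible with $\mathcal A$,'' so you are near the right idea; but you neither note that the compatibility requires the arithmetic and continuous operators to live on disjoint variables, nor fix a single period $Q$ that absorbs all denominators at once (without this, the coset splittings for different arcs clash). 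Both points are essential, and supplying them brings your proposal in line with the paper's argument.
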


The main idea here is different: we write the kernels $G^{\rm low}_{k,s}$ and $G_{k,s,t}$ as tensor products of two components up to acceptable errors. One of these components is essentially a maximal average operator on a continuous group, which can be analyzed using the classical method of Christ \cite{Ch}. The other component is an arithmetic operator-valued analogue of the classical Gauss sums, which leads to the key factors $2^{-s/D^2}$ and $2^{-t/D^2}$ in \eqref{picu27} and \eqref{picu27.5}.

More precisely, for any integer $Q\geq 1$ we define the subgroup
\begin{equation}\label{gio1}
\HH_Q:=\{h=(Qh_{l_1l_2})_{(l_1,l_2)\in Y_d}\in\G_0:\,h_{l_1,l_2}\in\Z\}.
\end{equation}
Clearly $\HH_Q\subseteq\G_0$ is a normal subgroup. Let $\JJ_Q$ denote the coset
\begin{equation}\label{gio2}
\JJ_Q:=\{b=(b_{l_1l_2})_{(l_1,l_2)\in Y_d}\in\G_0:\,b_{l_1,l_2}\in\Z\cap[0,Q-1]\},
\end{equation}
with the natural induced group structure. Notice that 
\begin{equation}\label{gio3}
\text{ the map }(b,h)\mapsto b\cdot h\text{ defines a bijection from }\JJ_Q\times\HH_Q\text{ to }\G_0.
\end{equation}
Assume that $Q\geq 1$ and $2^k\geq Q$. For any $a\in\Z^d$  and $\xi\in\R^d$ let
\begin{equation}\label{gio3.5}
\begin{split}
&J_k(\xi):=2^{-k}\int_{\R}\chi(2^{-k}x)\ex[-A_0^{(1)}(x){.}\xi]\,dx
=\int_{\R}\chi(y)\ex[-A_0^{(1)}(y){.}(2^k\circ\xi)]\,dy,\\
&S(a/Q):=Q^{-1}\sum_{n\in \Z_Q}\ex[-A_0^{(1)}(n){.}a/Q].
\end{split}
\end{equation}

The point is that the kernels $G^{\rm low}_{k,s}$ and $G_{k,s,t}$ can be decomposed as tensor products. Indeed, to decompose $G_{k,s,t}$ (the harder case) we set $Q:=Q_t=\big(\big\lfloor 2^{D(t+1)}\big\rfloor\big)!$ as in \eqref{eq:2}. Then we show that if $k\geq \kappa_t$ (so $2^k\gg Q_t^4$), $h\in\HH_{Q_t}$ and $b_1,b_2\in\G_0$ satisfy $|b_1|+|b_2|\leq Q^4$ then
\begin{equation}\label{kio6}
G_{k,s,t}(b_1\cdot h\cdot b_2)\approx W_{k,Q_t}(h)V_{\mathcal{R}^d_t\setminus \widetilde{\mathcal{R}}^d_{Q_s},\mathcal{R}_s^{d'}, Q_t}(b_1\cdot b_2),
\end{equation}
up to acceptable summable errors. Here
\begin{equation*}
W_{k,Q}(h):=Q^{d+d'}\phi_k(h)\int_{\R^d\times\R^{d'}}\eta_{\leq\delta' k}(2^k\circ\xi)\eta_{\leq\delta k}(2^k\circ\theta)\ex(h{.}(\xi,\theta))J_k(\xi)\,d\xi d\theta,
\end{equation*}
\begin{equation*}
V_{\A, \B,Q}(b)
:=Q^{-d-d'}\Big\{\sum_{\sigma^{(1)}\in\A\cap[0,1)^d}S(\sigma^{(1)})\ex[b^{(1)}{.}(\sigma^{(1)})]\Big\}
\Big\{\sum_{\sigma^{(2)}\in\B\cap[0,1)^{d'}}\ex[b^{(2)}.(\sigma^{(2)})]\Big\},
\end{equation*}
and $\phi_k(h):=\phi_{k}^{(1)}(h^{(1)})\phi_{k}^{(2)}(h^{(2)})$, $h=(h^{(1)},h^{(2)})\in\HH_{Q}$, is defined in \eqref{picu6.6}, $b=(b^{(1)},b^{(2)})\in\G_0$, and the functions $J_k$ and $S$ are defined in \eqref{gio3.5}.

Finally, we show that the kernels $V_{s,t}:=V_{\mathcal{R}^d_t\setminus \widetilde{\mathcal{R}}^d_{Q_s},\mathcal{R}_s^{d'}, Q_t}$ (which can be interpreted as an operator-valued Gauss sums) define bounded operators on $\ell^2(\JJ_{Q_t})$, 
\begin{equation*}
\|f\ast_{\JJ_{Q_t}} V_{s,t}\|_{\ell^2(\JJ_{Q_t})}\lesssim 2^{-t/D}\|f\|_{\ell^2(\JJ_{Q_t})}.
\end{equation*}
Moreover, the kernels $W_{k,Q_t}$ are close to classical maximal operators and one can show that
\begin{equation*}
\big\|\sup_{k\geq \kappa_t}|f\ast_{\HH_{Q_t}} W_{k,Q_t}|\big\|_{\ell^2(\HH_{Q_t})}\lesssim \|f\|_{\ell^2(\HH_{Q_t})}.
\end{equation*}
The desired bounds \eqref{picu27.5} follow using the approximation formula \eqref{kio6}.

\section{A nilpotent Waring theorem on the group $\G_0$}\label{notation}

The classical Waring problem, solved by Hilbert \cite{Hilb} in 1909, concerns the possibility of writing any positive integer as a sum of finitely many $p$ powers: for any integer $p\geq 1$ there is $N(p)$ such that any integer $y\in\Z_+$ can be written in the form
\begin{equation}\label{Wri1}
y=\sum_{i=1}^{N(p)}x_i^p,\qquad \text{ for some non-negative integers }x_1,\ldots,x_{N(p)}.
\end{equation}

There is a vast amount of literature on this problem and its many possible extensions. We are interested here in understanding the analogous question on our discrete nilpotent Lie group $\G_0$ and for our given polynomial sequence $A_0$: can one represent elements $g\in\G_0$ in the form 
\begin{equation}\label{Wri2}
g
=
A_0(n_1)^{-1}\cdot A_0(m_1)\cdot\ldots\cdot A_0(n_r)^{-1}\cdot A_0(m_r),
\end{equation}
for some integers $n_1,m_1,\ldots,n_r,m_r$, provided that $r$ is large enough? We are, in fact, interested in proving a quantitative statement on the number of such representations, for integers $n_1,m_1,\ldots,n_r,m_r\in [N]:=[-N,N]\cap\Z$.

We make two observations. First, many group elements $g$ cannot be written in the form \eqref{Wri2}, due to local obstructions; for instance, if $g$ can be represented in the form \eqref{Wri2} then necessarily $g_{10}\equiv g_{20}\equiv\ldots\equiv g_{d0}\, (\mathrm{mod}\,2)$, $g_{10}=g_{30}=\ldots \, (\mathrm{mod}\,3)$ etc. Second, there is a significant difference between the classical Waring problem \eqref{Wri1} and its nilpotent analogue \eqref{Wri2}, namely the positivity of the $p$-powers which imposes size restrictions on the variables $x_i$ in terms of the prescribed output value $y$.

For integers $r,N\geq 1$ and $g\in\G_0$ let 
\begin{align} \label{id:2}
 S_{r,N}(g):=\big|\big\{ (m,n) \in [N]^{2r}:\, A_0(n_1)^{-1}\cdot A_0(m_1)\cdot\ldots\cdot A_0(n_r)^{-1}\cdot A_0(m_r) = g  \big\}\big|.
\end{align}
Our main result in this section is the following:

\begin{theorem} \label{thm:1}

(i) There is an integer $r_0(d)\geq 1$ such that if $r\geq r_0(d)$ is sufficiently large and $g\in\G_0$ then
	\begin{align} \label{id:asym}
		S_{r,N}(g) 
		& = N^{2r}
		 \Big( \prod_{(l_1,l_2) \in Y_d} N^{-\abs{l_1} - \abs{l_2}} \Big)
		\Big[\mathfrak{S} (g)  \int_{\R^{d+d'}} \Phi (\zeta) \ex(-(N^{-1}\circ g) . \zeta)\, d\zeta+O_{r} (N^{-1/2})\Big],
	\end{align}
uniformly in $N \in \N$. Here the singular series $\mathfrak{S}$ is defined by
\begin{align} \label{id:SS}
	\mathfrak{S} (g)
	:=
	\sum_{a/q\in \mathcal{R}_{\infty}^{d+d'} \cap[0,1)^{d+d'}} \overline{G(a/q)} \ex(-g . a/q)
\end{align}
and the singular integral $\Phi$ is defined by
\begin{align} \label{def:Phi}
	\Phi(\xi) = \int_{[-1,1]^{2r}} \ex\big( D(z,w) . \xi \big) \, dz \, dw, \qquad \xi \in \R^{d+d'}.
\end{align}
In particular, all elements $g\in\G_0$ cannot be represented in the form \eqref{Wri2} more than a constant times the expected number of representations, i.e.
\begin{equation}\label{Wri1.5}
S_{r,N}(g)\lesssim_r N^{2r}\Big( \prod_{(l_1,l_2) \in Y_d} N^{-\abs{l_1} - \abs{l_2}} \Big)\qquad\text{ for any }g\in\G_0.
\end{equation}

(ii) For $r_0(d)$ as above, if $r\geq r_0(d)$ and $r$ is even, then there is a sufficiently large integer $Q=Q(r)$ and an element $g_0\in\JJ_Q$ (see definitions \eqref{gio1}--\eqref{gio2}) such that
\begin{equation}\label{Wri2.5}
S_{r,N}(g) = N^{2r}\Big( \prod_{(l_1,l_2) \in Y_d} N^{-\abs{l_1} - \abs{l_2}} \Big)\Big[c_r(g)+O_{r,g} (N^{-1/2})\Big],
\end{equation}
for any $g\in g_0\HH_Q$, where $c_r(g)\approx_r 1$ uniformly in $g$.
\end{theorem}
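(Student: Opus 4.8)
The plan is to run a Hardy--Littlewood circle method on the counting function $S_{r,N}(g)$, using the nilpotent Weyl estimates of Proposition \ref{minarcs} to control the minor arcs. First I would write
\[
S_{r,N}(g)=\int_{\T^{d+d'}}\Big(\sum_{n,m\in[N]^{r}}\ex\big(D(n,m).\theta\big)\Big)\ex(-g.\theta)\,d\theta,
\]
using the explicit polynomial formula \eqref{pro0.4} for $D$. The idea is to decompose $\T^{d+d'}$ into major arcs $\mathfrak{M}(a,q)$ around rationals $a/q$ with $q\le N^{\eta}$ (for a small $\eta=\eta(d)$) and a minor-arc complement $\mathfrak{m}$. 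On $\mathfrak{m}$, Dirichlet's theorem produces for \emph{some} coordinate $(l_1,l_2)\in Y_d$ a rational approximation $|\theta_{l_1l_2}-a/q|\le 1/q^2$ with $q\in[N^{\eta},N^{l_1+l_2-\eta}]$, so Proposition \ref{minarcs}(i) gives $|S_{N,r}(\theta)|\lesssim_\eta N^{2r}N^{-1/(2\eta)}$, which — taking $r$ large enough that the number of variables dominates and choosing $\eta$ appropriately — beats the trivial volume factor $N^{-|l_1|-|l_2|}$ with room to spare, yielding the $O_r(N^{-1/2})$ error after integrating.

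Next I would analyze the major arcs. On $\mathfrak{M}(a,q)$ write $\theta=a/q+\beta$ with $|\beta|$ small; a standard change of variables splits the generating sum into an arithmetic Gauss-sum factor $q^{-2r}\sum_{v,w\in\Z_q^{r}}\ex(D(v,w).(a/q))=G(a/q)$ (as in \eqref{pro0.6}) times a continuous oscillatory integral over $[-1,1]^{2r}$, which after rescaling $\beta=N^{-1}\circ\zeta$ becomes $N^{2r}\prod N^{-|l_1|-|l_2|}$ times $\int \Phi(\zeta)\ex(-(N^{-1}\circ g).\zeta)\,d\zeta$ with $\Phi$ as in \eqref{def:Phi}; the error in this approximation is controlled by the $C^1$ bounds on $\chi$ and Euler--Maclaurin-type estimates. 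Summing over all major arcs and extending the truncated sum over $a/q$ to the full singular series \eqref{id:SS} is justified by the Gauss-sum decay \eqref{pro0.7}: since $|G(a/q)|\lesssim_\varepsilon q^{-1/\varepsilon}$ and the number of reduced fractions with denominator $q$ is $\lesssim q^{d+d'}$, the tail $\sum_{q>N^\eta}$ is summable and $\ll N^{-1/2}$ provided $r$ (hence $1/\varepsilon$) is large enough. This proves part (i); the bound \eqref{Wri1.5} then follows since $|\mathfrak{S}(g)|\lesssim 1$ and $\int|\Phi|\,d\zeta<\infty$ by the same Weyl-type decay (Proposition \ref{minarcscon}).

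For part (ii) the task is to exhibit a coset $g_0\HH_Q$ on which $c_r(g)=\mathfrak{S}(g)\int\Phi(\zeta)\ex(-(N^{-1}\circ g).\zeta)\,d\zeta$ is bounded below. The continuous integral is essentially constant and positive for $g$ in a fixed bounded region once $r$ is even (positivity of $\Phi$-type integrals over symmetric domains, or a second-moment argument), so the real content is showing the singular series $\mathfrak{S}(g)\gtrsim_r 1$ on a full arithmetic progression. By multiplicativity one factors $\mathfrak{S}(g)=\prod_p \sigma_p(g)$ over primes; for $p$ large the local density $\sigma_p(g)=1+O(p^{-1-c})$ by \eqref{pro0.7}, so $\prod_{p>P_0}\sigma_p(g)\approx 1$ uniformly, and it remains to choose $Q=\prod_{p\le P_0}p^{k_p}$ and a residue $g_0$ so that each $\sigma_p(g_0)>0$ — equivalently, so that the congruence $D(v,w)\equiv g_0 \pmod{p^{k_p}}$ is solvable with the right count. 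This is a Hensel-lifting / local solvability statement; the main obstacle is precisely verifying this $p$-adic solvability with a uniform lower bound, which requires understanding the reduction of the explicit polynomial system \eqref{pro0.4} mod small prime powers (the "local obstructions" noted before the theorem, e.g. the parity conditions $g_{l_10}\equiv g_{l_1'0}\pmod 2$, must be built into the choice of $g_0$). Once a single nonsingular solution mod $p$ is found for each small $p$ — which the evenness of $r$ and the many free variables should guarantee — Hensel's lemma propagates it and delivers $\sigma_p(g_0)\gtrsim p^{-C}$, hence $c_r(g)\approx_r 1$ on the whole coset $g_0\HH_Q$.
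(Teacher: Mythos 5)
Your treatment of part (i) is essentially the paper's: circle method decomposition, Dirichlet plus Proposition~\ref{minarcs}(i) on the minor arcs, then Gauss-sum/oscillatory-integral splitting on the major arcs with tails controlled via \eqref{pro0.7} and Proposition~\ref{minarcscon}. That all matches. The divergence, and the gaps, are in part (ii).

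First, the positivity of the singular integral. You assert that $\int\Phi$ is positive ``once $r$ is even (positivity of $\Phi$-type integrals over symmetric domains, or a second-moment argument).'' This is not a proof, and evenness of $r$ does not make $\Phi$ a square. The paper's argument is: (a) write $\int\Phi(\zeta)\widehat\chi(\epsilon\zeta)\,d\zeta=\int_{[-1,1]^{2r}}\epsilon^{-(d+d')}\chi(D(z,w)/\epsilon)\,dz\,dw\ge 0$ and let $\epsilon\to 0$ to get non-negativity; and (b) upgrade to strict positivity by producing a point $(z_0,w_0)\in[-1,1]^{2r}$ with $D(z_0,w_0)=0$ \emph{and} $\mathrm{rank}[\nabla_{z,w}D(z_0,w_0)]=d+d'$. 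Step (b) is the real content and requires Lemma~\ref{L4.2}, whose proof is a genuine piece of work (counting solutions of the minor-rank locus via Cramer's rule, a parameter reduction, and \cite[Lemma~5.3]{IMW}). Your proposal contains nothing playing the role of this full-rank lemma, so the lower bound on $\int\Phi$ is unjustified.

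Second, the positivity of the singular series on a coset. You propose a Hensel-lifting strategy: for each small prime $p$ find a nonsingular solution of $D\equiv g_0\ (\mathrm{mod}\ p)$ and lift. You correctly flag this as ``the main obstacle'' --- and indeed it is nontrivial, because one has to exhibit a nonsingular $p$-adic point of a specific complicated polynomial system for every small $p$. The paper avoids this entirely. It proves the identity
\[
1+\sum_{v=1}^n A(p^v,h)=\frac{M(p^n,h)}{p^{n(2r-d-d')}},\qquad M(q,h)=\big|\{(m,n)\in\Z_q^{2r}:D(n,m)\equiv h\ (\mathrm{mod}\ q)\}\big|,
\]
hence $B(p,h)\ge 0$ automatically, and then observes that $\sum_{h\in\Z_q^{d+d'}}M(q,h)=q^{2r}$, so the \emph{average} of $M(p^n,h)/p^{n(2r-d-d')}$ over residues $h$ is $1$; by pigeonhole there is a residue $h_{p^n}$ with $B(p,h_{p^n})\ge 1/2$. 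The theorem only asks for \emph{some} coset $g_0\HH_Q$, so one may simply choose $g_0$ to match these good residues for each small prime and set $Q=\prod_{p<p_0}p^{n}$. No Hensel's lemma, no local solvability analysis. Your route is strictly harder and, as you note yourself, not completed; the paper's averaging trick is the key idea you are missing. (The Remark after the proof does bring in Hensel's lemma, but only to show the stronger statement that $\mathfrak{S}(0)>0$, which goes beyond what Theorem~\ref{thm:1}(ii) asserts.)
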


\begin{proof} Observe that $D(n,m) = A_0(n_1)^{-1}\cdot A_0(m_1)\cdot\ldots\cdot A_0(n_r)^{-1}\cdot A_0(m_r)$, see \eqref{pro0.3.5}. Using the classical delta function we can write
\begin{equation}\label{Wri5}
	S_{r,N}(g) =\sum_{m,n \in [N]^{r}} \int_{\T^{d+d'}} \ex\big( D(n,m) . \xi \big) \ex(-g . \xi) \, d\xi.
\end{equation}

\smallskip
{\bf{Step 1.}} We start by decomposing the integration in $\xi$ into major and minor arcs.
For any integer $m\geq 1$ and any positive number $M > 0$, we define the set of rational fractions 
\begin{equation}\label{id:3}
	\mathcal{R}_{\le M}^m:=\{a/q:\,a=(a_1,\ldots, a_m)\in\Z^m,\, q \in [1,M]\cap\Z,\,\mathrm{gcd}(a_1,\ldots,a_m,q)=1\}.
\end{equation}
Notice that we use a bit different definition of $\mathcal{R}_{\le M}^m$ than in \eqref{picu6}.
We fix a small constant $\delta=\delta(d)\ll 1$ and a smooth radial function $\eta_0 \colon \R^{|Y_d|} \to [0,1]$ such that $\ind{|x|\le 1}\le \eta_0 (x) \le \ind{|x|\le 2}$, $x \in \R^{|Y_d|}$. For $A>0$ let $\eta_{\leq A} (x):= \eta_0 \big(A^{-1} x \big)$, $x\in \R^{|Y_d|}$; 
here we use a bit different definition of $\eta_{\leq A}$ than in \eqref{cutR2}.
Then we introduce the projections
\begin{align} \label{id:5}
	\Xi_{N}(\xi)
	:=\sum_{a/q\in \mathcal{R}_{\le N^\delta}^{d+d'}} 
	\eta_{\leq N^\delta} \big(N \circ (\xi-a/q) \big), 
	\qquad \xi \in \mathbb{T}^{d+d'}, \quad N \in \N,
\end{align}
and decompose the integration in \eqref{Wri5} into major and minor arcs, i.e. we define 
\begin{align} \label{id:maj}
	S_{r,N, \maj}(g) &:= \sum_{m,n \in [N]^{r}} \int_{\T^{d+d'}} \ex\big( D(n,m) . \xi \big) \ex(-g . \xi) \Xi_{N}(\xi) \, d\xi, \\ \label{id:min}
	S_{r,N, \minor}(g) &:= 
	\sum_{m,n \in [N]^{r}} \int_{\T^{d+d'}} \ex\big( D(n,m) . \xi \big) \ex(-g . \xi) \big(1 - \Xi_{N}(\xi) \big) \, d\xi,
\end{align}
Notice that $S_{r,N}(g) = S_{r,N, \minor}(g) + S_{r,N, \maj}(g)$. Moreover
\begin{align}\label{Wri4}
		\abs{S_{r,N, \minor}(g)} \lesssim_{r} N^{2r - 1}\Big( \prod_{(l_1,l_2) \in Y_d} N^{-\abs{l_1} - \abs{l_2}} \Big), \qquad N \in \N, \quad g \in \G_0
		\end{align}
provided that $r$ is sufficiently large, as a consequence of Proposition \ref{minarcs} (i) and the Dirichlet principle; in fact we use Proposition \ref{minarcs} (i) with $\phi_N^{(j)} = \psi_N^{(j)} = \ind{[N]}$, $1 \le j \le r$, which is still valid as can be seen by careful reading of the proof of this result contained in \cite{IMW}. Therefore the contribution of the minor arcs $S_{r,N, \minor}(g)$ can be absorbed by the error term in \eqref{id:asym}. 

\smallskip
{\bf{Step 2.}} Next, we deal with the major arcs contributions. Notice that 
\begin{align} \label{id:7}
	S_{r,N, \maj}(g) 
	= 
	\sum_{a/q\in \mathcal{R}_{\le N^\delta}^{d+d'}\cap[0,1)^{d+d'}} \ex(-g . a/q) 
	 \int_{\R^{d+d'}} \eta_{\leq N^\delta} \big(N \circ \xi \big) I_{r,N, a/q}(\xi) 
	 \ex(-g . \xi)  \, d\xi, 
\end{align}
where
\begin{align} \label{id:8}
	I_{r,N, a/q}(\xi) 
	= 
	\sum_{m,n \in [N]^{r}} \ex\big( D(n,m) . (a/q) \big)  \ex\big( D(n,m) . \xi \big).
\end{align}
Observe that for $a/q\in \mathcal{R}_{\le N^\delta}^{d+d'}\cap[0,1)^{d+d'}$ and 
$\abs{N \circ \xi} \lesssim N^\delta$ we have 
\begin{align*}
	I_{r,N, a/q}(\xi) 
	& = 
	\sum_{m,n \in [N/q]^{r}} \sum_{u,v\in \Z_q^r} \ex\big( D(v,w){.}(a/q) \big)
	   \ex\big( D(qn,qm) . \xi \big) 
	 + O(q N^{2r-1 + \delta}) \\
	 & =
	 N^{2r} \overline{G(a/q)} \Phi(N \circ \xi)+ O(q N^{2r-1 + \delta}),
\end{align*}
where $G(a/q)$ is defined in \eqref{pro0.6} and $\Phi$ is defined in \eqref{def:Phi}.

Therefore, if $\delta\leq (10d)^{-4}$ then we have
\begin{equation}\label{Wri7} 
\begin{split}
&S_{r,N, \maj}(g)=N^{2r} \Big( \prod_{(l_1,l_2) \in Y_d} N^{-\abs{l_1} - \abs{l_2}} \Big)\\
	&\quad\times\Big[\sum_{a/q\in \mathcal{R}_{\le N^\delta}^{d+d'}} \overline{G(a/q)} \ex(-g . a/q) 
	\int_{\R^{d+d'}} \eta_{\leq N^\delta} \big(\xi \big) \Phi(\xi)
	\ex(-g . (N^{-1}\circ\xi))  \, d\xi+O_r(N^{-1/2})\Big].
\end{split}
\end{equation}
It follows from Proposition~\ref{minarcs} (ii) and Proposition \ref{minarcscon} that
\begin{align} \label{estGauss}
	\abs{G(a/q)} \lesssim_r q^{-1/\delta^2},\qquad (a,q)=1,
\end{align}
and
\begin{align} \label{id:9}
	\abs{\Phi (\zeta)} \lesssim_r \langle \zeta \rangle^{-1/\delta^2}, \qquad \zeta \in \R^{d+d'},
\end{align}
provided that $r$ is sufficiently large. Therefore, recalling the definition \eqref{id:SS},
\begin{equation}\label{Wri8}
\begin{split}
&|\mathfrak{S} (g)|\lesssim_r 1,\\
&\abs[\Big]{\mathfrak{S} (g) - \sum_{a/q\in \mathcal{R}_{\le N^\delta}^{d+d'}} \overline{G(a/q)} \ex(-g . a/q)}\lesssim_r\sum_{q \ge N^{\delta}} q^{d + d' - 1/\delta^2}\lesssim_r N^{-1/(2\delta)}.
\end{split}
\end{equation}
Moreover, we have
\begin{equation}\label{Wri9}
\begin{split}
&\Big|\int_{\R^{d+d'}} \eta_{\leq N^\delta} \big(\xi \big) \Phi(\xi)\ex(-g . (N^{-1}\circ\xi))  \, d\xi\Big|\lesssim_r 1,\\
&\Big|\int_{\R^{d+d'}} \eta_{\leq N^\delta} \big(\xi \big) \Phi(\xi)\ex(-g . (N^{-1}\circ\xi))  \, d\xi-\int_{\R^{d+d'}} \Phi(\xi)\ex(-g . (N^{-1}\circ\xi))  \, d\xi\Big|\lesssim_r N^{-1/(2\delta)}.
\end{split}
\end{equation}
It follows from \eqref{Wri7}, \eqref{Wri8}, and \eqref{Wri9} that
\begin{equation}  \label{id:11}
\begin{split}
&S_{r,N, \maj}(g)=N^{2r} \Big( \prod_{(l_1,l_2) \in Y_d} N^{-\abs{l_1} - \abs{l_2}} \Big)\Big[\mathfrak{S}(g)\int_{\R^{d+d'}}\Phi(\xi)
	\ex(-g . (N^{-1}\circ\xi))  \, d\xi+O_r(N^{-1/2})\Big].
\end{split}
\end{equation}
The desired conclusion \eqref{id:asym} follows using also \eqref{Wri4}. This completes the proof of part (i) of the theorem.

\smallskip
{\bf{Step 3.}} We analyze now the singular series $\mathfrak{S}$ defined in \eqref{id:SS}. Observe that 
\begin{equation}\label{Wri10}
\mathfrak{S} (h) = \sum_{q \ge 1} A(q,h),\qquad A(q,h) := \sum_{(a,q) = 1} \overline{G(a/q)} \ex(-h . a/q),	
\end{equation}
for any $h\in\G_0$. Notice that the sequence $A(q,h)$ is multiplicative in the sense that $A(q_1 q_2,h) = A(q_1,h) A(q_2,h)$ provided that  $(q_1,q_2)=1$ and $h\in\G_0$. Therefore, letting $\PP$ denote the set of primes,
\begin{align} \label{id:12}
\mathfrak{S} (h) = \prod_{p \in \PP} B(p,h), \qquad B(p,h):=1 + \sum_{n \ge 1} A(p^n,h).
\end{align}	

For $h\in\G_0$ and $q\geq 1$ let
\begin{align}\label{Wri17} 
M(q,h):= \big|\big\{ (m,n) \in\Z_q^{2r} : D(n,m) = h \, (\mathrm{mod}\,q)  \big\}\big|.
\end{align}
We prove that for any $h\in\G_0$, $p\in\PP$ and integer $n\geq 1$ we have
\begin{align} \label{5.1}
1 + \sum_{v=1}^n A(p^v,h) = \frac{M(p^n,h)}{p^{n(2r-d-d')}}.
\end{align}	
Indeed, for any integer $q\geq 1$ we have
\begin{align*} 
	M(q,h) 
	& = 
	q^{-d-d'} \sum_{t \in \Z_q^{d+d'}} \sum_{m, n \in \Z_q^{r}} \ex\big( (D(n,m) - h). (t/q) \big) \\
	& = 
	q^{-d-d'} \sum_{q_1|q} \sum_{w \in\Z_{q/q_1}^{d+d'},\,(w,q/q_1) = 1} 
	\sum_{m, n \in \Z_q^{r}}
	\ex\big( (D(n,m) - h). (wq_1/q) \big) \\
	& = 
	q^{-d-d'} \sum_{q_2|q} \sum_{w \in\Z_{q_2}^{d+d'},\,(w,q_2) = 1} 
	\sum_{m, n \in \Z_q^{r}}
	\ex\big( (D(n,m) - h). (w/q_2) \big) \\
	& = 
	q^{-d-d'} \sum_{q_2|q} \sum_{w \in \Z_{q_2}^{d+d'},\,(w,q_2) = 1} 
	q^{2r} \overline{G(w/q_2)} \ex\big(-h . (w/q_2) \big) \\
	& = 
	q^{2r -d-d'} \sum_{q_2|q} A(q_2,h).
\end{align*}	
The identity \eqref{5.1} follows by applying this with $q=p^n$, $p\in\PP$. In particular $\mathfrak{S} (h)$ and $B(p,h)$ are real non-negative numbers,
\begin{equation}\label{Wri12}
\mathfrak{S} (h),\,B(p,h)\in[0,\infty)\qquad \text{ for any } h\in\G_0,\,p\in\PP.
\end{equation}

We would like to show now that $\mathfrak{S} (h)\gtrsim_r 1$ for a large set of elements $h\in\G_0$, in order to be able to exploit the expansion \eqref{id:asym}. We notice first that for any integer $r$ sufficiently large there is $p_0(r)\in\PP$ such that
\begin{align}\label{Wri15} 
1/2 \le\prod_{p \in \PP,\,p \ge p_0(r)} B(p,h) \le 3/2,
\end{align}
for any $h\in\G_0$, due to the rapid decay of the coefficients $G(a/q)$ in \eqref{estGauss}. Moreover, using the formulas \eqref{id:12} and \eqref{5.1},
\begin{equation*}
B(p,h)=\frac{M(p^n,h)}{p^{n(2r-d-d')}}+O_r(2^{-n/\delta})
\end{equation*}
for any $n\geq 1$, $h\in\G_0$ and $p\in\PP$, $p\leq p_0(r)$. In view of the definition \eqref{Wri17},
\begin{equation*}
\sum_{h\in\Z_q^{d+d'}}M(q,h)= q^{2r} \qquad\text{ for any }q\geq 1.
\end{equation*}
Therefore we can fix $n=n(r)$ sufficiently large and $h_{p^n}\in \Z_{p^n}^{d+d'}$ such that $B(p,h_{p^n})\geq 1/2$ for any prime $p\leq p_0(r)$. Therefore we can fix
\begin{equation*}
Q=Q(r)=\prod_{p\in\PP,\,p<p_0(r)}p^{n(r)},\qquad g_0\in\Z_Q^{d+d'},\qquad g_0\equiv h_{p^n}\,(\mathrm{mod}\,p^n),
\end{equation*}
with the property that $B(p,g)\ge 1/2$ for any prime $p<p_0(r)$ and $g \in g_0\HH_Q$. Thus
\begin{equation}\label{Wri18}
\mathfrak{S}(g)\gtrsim_r 1\text{ for any }g\in g_0\HH_Q.
\end{equation}

{\bf{Step 4.}} Finally we analyze the contribution of the singular integral. Since 
\begin{equation*}
\int_{\R^{d+d'}} \Phi (\zeta) \ex(-(N^{-1}\circ g) . \zeta)\, d\zeta=\int_{\R^{d+d'}} \Phi (\zeta)\, d\zeta +O_{r,g}(N^{-1}),
\end{equation*}
due to \eqref{id:9}, to prove the approximate identity \eqref{Wri2.5} it suffices to prove that 
\begin{equation}\label{Wri20}
\int_{\R^{d+d'}} \Phi (\zeta)\, d\zeta\gtrsim_r 1.
\end{equation}
We fix a smooth function $\chi:\R^{d+d'}\to[0,1]$, satisfying $\chi(x)=1$ if $|x|\leq 1/2$, $\chi(x)=0$ if $|x|\geq 2$, and $\int_{\R^{d+d'}}\chi(x)\,dx =1 $. For $\epsilon\leq\epsilon(r)$ sufficiently small we write
\begin{equation}\label{Wri21}
\int_{\R^{d+d'}} \Phi (\zeta)\widehat{\chi}(\epsilon\zeta)\, d\zeta=\int_{[-1,1]^{2r}}\epsilon^{-(d+d')}\chi(D(z,w)/\epsilon)\,dzdw,
\end{equation}
using the definition \eqref{def:Phi}. In particular, by letting $\epsilon\to 0$, $\int_{\R^{d+d'}} \Phi (\zeta)\, d\zeta$ is a real non-negative number. Moreover, the lower bound \eqref{Wri20} follows from \eqref{Wri21} provided that we can show that there is a point $(z_0,w_0)\in[-1,1]^{2r}$ such that
\begin{equation}\label{Wri22}
D(z_0,w_0)=0\qquad\text{ and }
\qquad \mathrm{rank}[\nabla_{\! z,w}D(z_0,w_0)]=d+d'.
\end{equation}
We notice that this follows easily from Lemma \ref{L4.2} below. 
\end{proof}

\begin{lemma}\label{L4.2} Let $r\geq r_0(d)$. Then there exists $(n,m)\in \Z^{2r}$ such that 
\begin{equation}\label{Wri23} \mathrm{rank}[\nabla_{\! x,y}D(n,m)]=d+d'.
\end{equation}
\end{lemma}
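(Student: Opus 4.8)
\emph{The plan} is to exhibit a single real (indeed complex) point at which $\nabla_{\!x,y}D$ has maximal rank $d+d'$, and then to promote it to an integer point using the dilation structure of $D$.

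\emph{Step 1: reduction to a real point.} From the explicit formulas \eqref{pro0.4} one reads off that $D$ is weighted homogeneous: $D(\Lambda x,\Lambda y)=\Lambda\circ D(x,y)$ for $\Lambda>0$, with $\Lambda\circ$ the dilation from \eqref{tra3.5}. Differentiating in $(x,y)$ gives $\nabla_{\!x,y}D(\Lambda x,\Lambda y)=\Lambda^{-1}(\Lambda\circ)\,\nabla_{\!x,y}D(x,y)$, and $\Lambda^{-1}(\Lambda\circ)$ is an invertible diagonal map on $\R^{d+d'}$ (with entries $\Lambda^{l_1+l_2-1}$); hence the rank of $\nabla_{\!x,y}D$ is constant along dilation orbits. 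Consequently it suffices to find \emph{some} point where the rank equals $d+d'$: the locus where it does is Zariski open, so if nonempty it meets $\mathbb{Q}^{2r}$, and a further dilation by a common denominator produces a point of $\Z^{2r}$, which is what \eqref{Wri23} asks for. (We may freely assume $r_0(d)\ge d+d'$, enlarging $r_0(d)$ if necessary.)

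\emph{Step 2: a convenient slice.} Set $x=0$. By \eqref{pro0.4}, $[D(0,y)]_{l_10}=\sum_{j=1}^r y_j^{l_1}$ and $[D(0,y)]_{l_1l_2}=\sum_{1\le j_1<j_2\le r}y_{j_1}^{l_1}y_{j_2}^{l_2}$ when $l_2\ge1$, so the block of $\nabla_{\!x,y}D(0,y)$ coming from the $y$-variables is the $(d+d')\times r$ matrix $M(y)$ with
\[
M_{(l_1,0),k}=l_1 y_k^{l_1-1},\qquad
M_{(l_1,l_2),k}=l_1 y_k^{l_1-1}\sum_{j>k}y_j^{l_2}+l_2 y_k^{l_2-1}\sum_{j<k}y_j^{l_1}\quad(l_2\ge1).
\]
Since $\nabla_{\!x,y}D$ has only $d+d'$ rows, it is enough to show $\mathrm{rank}\,M(y)=d+d'$ for generic $y$; equivalently, that the $d+d'$ polynomials $[D(0,\cdot)]_{(l_1,l_2)}$ (the power sums $p_{l_1}$ together with the bidegree cross-sums) are algebraically independent over $\R$, i.e.\ the rows of $M(y)$ are independent over the field $\R(y_1,\dots,y_r)$. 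The $d$ rows indexed by $(l_1,0)$ already span a $d$-dimensional space — a Vandermonde block in the distinct scalars $y_1,\dots,y_r$ — so the real content is that the $d'$ rows indexed by $Y'_d$ are independent of those and of one another.

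\emph{The main obstacle} is exactly this last linear-independence check. It is elementary but requires some care with the combinatorics of $Y_d$, and in particular one cannot simply use the symmetric point: on the diagonal $x=y$ (where $D\equiv0$) the Jacobian $\nabla_{\!x,y}D$ fails to have maximal rank as soon as $d\ge3$, since then, e.g., the $(3,0)$- and $(2,1)$-coordinates of $\partial_{y_k}D$ are both multiples of $y_k^2$, forcing a rank drop. A clean way to finish is to evaluate $M$ at an explicit point such as $y=(1,2,\dots,r)$ and check directly that some $(d+d')\times(d+d')$ minor does not vanish; alternatively, writing $D(n,m)=A_0(n_1)^{-1}A_0(m_1)\cdots A_0(n_r)^{-1}A_0(m_r)$ (cf.\ \eqref{pro0.3.5}), one can argue by an orbit/controllability principle — the velocity vectors $\tfrac{d}{dt}\big(A_0(t)^{-1}A_0(\cdot)\big)$ fill out the non-central part of the Lie algebra $\mathfrak g_0$ as $t$ varies, and the brackets $[e_{a0},e_{b0}]$ with $a>b$ recover every central generator of $\mathfrak g_0$, so that the $r$-fold product map is a submersion somewhere once $r$ is large. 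Granting any such verification, Step 1 converts the point into the required element of $\Z^{2r}$ and \eqref{Wri23} follows.
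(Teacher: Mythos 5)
Your reduction is sound — homogeneity of $D$ under the dilations $\Lambda\circ$, Zariski density of $\Q^{2r}$ in the open full-rank locus, and a final integer rescaling do legitimately convert ``some real point of full rank'' into ``some integer point of full rank'', and slicing at $x=0$ so that $D(0,y)=A_0(y_1)\cdots A_0(y_r)$ is a reasonable simplification. But the proposal then stops at precisely the point where the lemma lives. You identify that the $d$ rows indexed by $(l_1,0)$ are a Vandermonde block, and you state that ``the real content is that the $d'$ rows indexed by $Y'_d$ are independent of those and of one another'' — and you do not prove it. Evaluating $M$ at $y=(1,2,\dots,r)$ and checking a nonvanishing $(d+d')\times(d+d')$ minor is a program, not an argument: for general $d$, with $r_0(d)$ not pinned down, it is not routine to exhibit such a minor or to see why it is nonzero. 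The alternative ``controllability / bracket-generation'' heuristic is also left as a sketch: one would need an actual orbit theorem in this polynomial setting together with a quantitative bound on how many factors $r$ suffice, neither of which is supplied. So there is a genuine gap — the proof reduces the lemma to an unverified linear-independence claim, and this claim is the entire content.

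For comparison, the paper's proof does not construct a good point; it bounds the bad set. Fixing $m\in[N]^r$ and using Cramer's rule, a rank drop of $\nabla_{x}D(n,m)$ forces a nontrivial integer linear relation \eqref{Wri25} among the rows, with coefficients $b_{l_1l_2}$ of size $O(N^{(2d-1)(d+d')})$. After introducing the auxiliary sums $T_l(n,m)$ and rewriting so that the $j$-th equation only involves $n_1,\dots,n_j$, one checks (by inspecting leading monomials in the pair $(n_{2j-1},n_{2j})$) that each such equation is genuinely polynomial in that pair, and the counting lemma \cite[Lemma~5.3]{IMW} then gives only $O(N)$ choices per pair. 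Summing over the $O(N^{C_d})$ choices of $(b,T)$ yields \eqref{Wri24}: the bad set has size $O(N^{(r+1)/2+C_d})$, which is $o(N^r)$ once $r>1+2C_d$, so a good $n\in[N]^r$ must exist. This is the step your proposal needs and does not replace.
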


Indeed, writing $D_r(x,y)=D(x,y):\R^{2r}\to \mathbb{G}_0^\#$, we have that 
$D_{r}(x,y) \cdot D_{r}(n,m)^{-1} = D_{2r}((x,m'),(y,n'))$ with $n'=(n_r,\ldots,n_1),\ m'=(m_r,\ldots,m_1)$. Assuming \eqref{Wri23} it is clear that the map $D_{2r}((x,m'),(y,n'))$ has maximal rank at $z_0=(n,m'),\ w_0=(m,n')$ and \eqref{Wri22} follows. 

The proof of Lemma \ref{L4.2} is based on counting points $(n,m)\in [N]^{2r}$ at which the rank of the map $\nabla_{\! x,y}D$ drops. This was also crucial in obtaining the nilpotent Weyl estimate \eqref{pro0.2}. 

\begin{proof}[Proof of Lemma~\ref{L4.2}] Let $N$ be sufficiently large with respect to $r,d$. It is enough to show that there is a constant $C_d >0$ ($C_d = 2d(d+d')^2$ works here) such that
\begin{equation}\label{Wri24} 
|\{n\in [N]^r: \mathrm{rank}[\nabla_{\! x} D(n,m)]<d+d'\}|
\ls_{d,r} 
N^{(r+1)/2 + C_d},
\end{equation}
holds uniformly for $m\in [N]^r$. Fix $m\in [N]^r$. If 
$\ \mathrm{rank}[\nabla_{\! x} D(n,m)]<d+d'$ then by Cramer's rule there exists $b_{l_1 l_2}\in \Z$, $|b_{l_1 l_2}|\ls N^{(2d-1)(d+d')}$ with $b_{l_1l_2}\neq 0$ for at least one $0\leq l_2<l_1\leq d$, such that
\begin{equation}\label{Wri25}
\sum_{0\leq l_2<l_1\leq d} b_{l_1 l_2}\, [\partial_j D(n,m)]_{l_1 l_2}=0 \quad\quad \textit{for all}\quad 1\leq j\leq r.
\end{equation}
From \eqref{pro0.4} we have that $[\partial_j D(n,m)]_{l_1 0}= - l_1 n_j^{l_1-1}$, and for $1\leq l_2$,
\begin{align}\label{Wri26}
[\partial_j D(n,m)]_{l_1 l_2} 
& = 
l_1 n_j^{l_1-1} \sum_{k>j} (n_k^{l_2}-m_k^{l_2}) 
+l_2 n_j^{l_2-1} \sum_{k<j} (n_k^{l_1}-m_k^{l_1})  \\ \nonumber
& \quad
-l_1 n_j^{l_1-1} m_j^{l_2}
+ 
(l_1 + l_2)n_j^{l_1 + l_2 - 1}.
\end{align}
We want to only include terms $k\leq j$ and to achieve that we introduce the parameters 
\[T_{l}=T_{l}(n,m)= \sum_{k=1}^r (n_k^{l}-m_k^{l}), \quad\textit{for}\quad\quad 1\leq l< d.\]

Note that $T_l \in [-2r N^{d-1},2r N^{d-1}]$. For fixed $T=(T_l)_{1\leq l<d}$, write \[\sum_{k>j} (n_k^{l_2}-m_k^{l_2}) = T_{l_2}(n,m) -\sum_{k\leq j} (n_k^{l_2}-m_k^{l_2}),\]
Substituting into \eqref{Wri26}, we obtain, up to lower degree terms in the variables $n=(n_1,\ldots,n_r)$,
\begin{equation}\label{Wri27}
[\partial_j D(n,m)]_{l_1 l_2} = 
-l_1 \sum_{k\leq j}n_j^{l_1-1} n_k^{l_2} +l_2\sum_{k<j} n_j^{l_2-1} n_k^{l_1}
+ 
(l_1 + l_2)n_j^{l_1 + l_2 - 1},  
\end{equation}
for $1\leq l_2< d$.
Thus the system in \eqref{Wri25} takes the form 
\begin{equation}\label{Wri28}
\sum_{0\leq l_2<l_1\leq d} b_{l_1 l_2}\, P^{j,T,m}_{l_1 l_2}(n_1,\ldots,n_j)=0, \quad\quad 1\leq j\leq r.
\end{equation}
Notice that for fixed $n_1,\ldots,n_{2j-2}$ with $j\leq r/2$, the left side of \eqref{Wri28} with $j$ replaced by $2j$ contains the monomials $- b_{l_1 0} l_1 n_{2j}^{l_1-1}$ and $b_{l_1 l_2} l_2 n_{2j}^{l_2 - 1} n_{2j-1}^{l_1}$, and hence is nonvanishing in the variables $n_{2j-1}, n_{2j}$. 
This, thanks to \cite[Lemma~5.3]{IMW}, implies that number of solutions to \eqref{Wri28} is at most $2(d+d')(N+1)$ in the variables $n_{2j-1}, n_{2j}$. 

As the number of choices for parameters $b=(b_{l_1 l_2})_{0\leq l_2<l_1\leq d}$ and $T=(T_l)_{1 \leq l<d}$ is $\ls_{r,d} N^{C_d}$ (with, say $C_d = 2d (d+d')^2$), \eqref{Wri23} follows.
\end{proof}

\begin{remark}
We remark that \eqref{Wri23} together with the argument proving \eqref{Wri22} also implies that the map $D_{2r}: \R^{4r}\to \mathbb{G}_0^\#$ is surjective. 
Indeed, the image of the map $D_r$ must contain an open ball $B(g,\delta) $ thus the image of $D_{2r}$ must contain an open ball $B(0,\delta')\subseteq B(g,\de) B(g,\de)^{-1}$ centered at the origin, then by homogeneity the whole space $\mathbb{G}_0^\#$.

Finally, we have in fact shown that for $r\geq r_0(d)$ the equations $D_{r}(n,m)=0$ have a non-singular integer solution and hence a non-singular $p$-adic solutions for each prime $p$. It is well-known from a general form of Hensel's Lemma, see \cite[Lemma 5.21]{Green}, that all local factors $B(p,0)$ are non-vanishing and hence the singular series $\mathfrak{S}(0)>0$.  Thus  Theorem~\ref{thm:1} and in particular asymptotic formula \eqref{Wri2.5} holds for $g=0$ with $c_r(0)>0$. This gives the precise nilpotent analogue of the asymptotic formulae for the number of solutions to the Vinogradov system: $\sum_{j=1}^r (n_j^l-m_j^l)=0$ for all $1\leq l\leq d$, see \cite{Da,Woo} for both historical and recent breakthrough developments.
\end{remark}

\newpage

\end{document}